\numberwithin{equation}{section}
\theoremstyle{plain}
\newtheorem{theorem}{Theorem}[section]
\newtheorem{lemma}[theorem]{Lemma}
\newtheorem{corollary}[theorem]{Corollary}
\theoremstyle{definition}
\newtheorem{definition}[theorem]{Definition}
\newtheorem{remark}[theorem]{Remark}
\newcommand{\mr}{\mathbb{R}}
\newcommand{\ud}{\mathrm{d}}
\begin{document}
	\title{\Large \bf The total Q-curvature, volume entropy and polynomial growth polyharmonic functions}
	\author{Mingxiang Li\thanks{M. Li, Nanjing University,  Email: limx@smail.nju.edu.cn. }}
	\date{}
	\maketitle
\begin{abstract}
	In this paper, we investigate a conformally flat and complete manifold $(M,g)=(\mathbb{R}^n,e^{2u}|dx|^2)$ with finite total Q-curvature. We introduce a new volume entropy, incorporating the background Euclidean metric, and demonstrate that the metric $g$ is normal if and only if the volume entropy is finite. Furthermore, we establish an identity for the volume entropy utilizing the integrated Q-curvature. Additionally, under normal metric assumption,  we get a result concering the behavior of the geometric distance at infinity compared with Euclidean distance. With help of this result,  we prove that each polynomial growth polyharmonic function on such manifolds is of finite dimension. Meanwhile, we prove several rigidity results by imposing restrictions on the sign of the Q-curvature. Specifically,  we establish that on such manifolds, the Cohn-Vossen inequality achieves equality if and only if each polynomial growth polyharmonic function is a constant.
\end{abstract}	

{\bf Keywords: } Total Q-curvature, Normal solution, Volume entropy, Polyharmonic functions.
\medskip

{\bf MSC2020: } 53C18, 53C20, 58J90.
	
\section{Introduction}

When considering a complete surface $(M^2,g)$, the famous Chern-Gauss-Bonnet formula was extended to non-compact surfaces by Cohn-Vossen \cite{CV} and Huber \cite{Hu}. Specifically, if the Gaussian curvature $K$ is absolutely integrable, then the formula states that:
\begin{equation}\label{huber's inequailty}
	\int_MK\ud v_g\leq 2\pi\chi(M)
\end{equation}
where $\chi(M)$ is the Euler number of $M$.  Notably, in \cite{Hu}, Huber demonstrated that such a surface $M$ can be conformally transformed into a closed surface with a finite number of punctures. To explore the generalized version of Huber's theorem, we refer the interested reader to \cite{Chang-Q-Yang2}, \cite{CH}, \cite{MQ} and the relevant references therein. Additionally, Finn \cite{Fin} established that the deficit is a sum of non-negative isoperimetric ratios $\nu_i$ near infinity, expressed as follows:
\begin{equation}\label{Fin's identity}
	\chi(M)-\frac{1}{2\pi}\int_M K\ud v_g=\sum \nu_i.
\end{equation}
In higher-dimensional cases, the Q-curvature performs a role similar to the Gaussian curvature. Naturally, one may wonder if analogous results exist for higher dimensions. For this situation. Chang, Qing, and Yang (\cite{CQY} ,\cite{Chang-Q-Yang2}) achieved a significant breakthrough.
Before presenting their results, we provide some background information for reader's convenience. 

During the 1980s, Paneitz \cite{Paneitz} introduced a renowned conformal operator denoted by
$$P_g^4=\Delta_g^2-\mathrm{div}_g\left((\frac{2}{3}R_gg-2Ric_g)d\right)$$
where $\Delta_g$ represents the Laplace-Beltrami operator, and $R_g$ together with  $Ric_g$ denote the scalar curvature and Ricci curvature tensor of $(M^4,g)$, respectively.  Subsequently, Branson\cite{Bra} introduced the Q-curvature (up to scaling) defined by
$$Q_g^4=-\frac{1}{6}(\Delta_gR_g-R_g^2+3|Ric_g|^2).$$
Remarkably, the Q-curvature satisfies the conformal invariant equation similar to Gaussian curvature:
$$
	P_g^4u+Q_g^4= Q_{\tilde g}^4e^{4u}
$$
for the conformal metric $\tilde g=e^{2u}g$. 
For higher order cases, Graham, Jenne, Mason and Sparling \cite{GJMS} introduced an operator known as GJMS operator  $P_g^{n}$ for $(M^n,g)$ satisfying
$$
	P_g^nu+Q^n_g=Q^n_{\tilde g}e^{nu}
$$
where $\tilde g=e^{2u}g$ and  $n\geq 4$ is an even integer. 
Numerous outstanding  results have been obtained for compact manifolds and for further details, we refer to \cite{CY95}, \cite{Bre}, \cite{DM} and the references therein.

In this paper, we focus on conformally flat cases, i.e.  those of the form $(M,g)=(\mr^n,e^{2u}|dx|^2)$, where $n\geq 2$ is an even integer and the Q-curvature $Q_g$ satisfying the equation:
\begin{equation}\label{Q-curvature }
	(-\Delta)^{\frac{n}{2}}u=Q_ge^{nu}.
\end{equation}
For the case $n=2$, the Q-curvature $Q_g$ is exactly the Gaussian curvature. Meanwhile, when $n\geq 4$, we have the scalar curvature $R_g$ satisfying 
\begin{equation}\label{scalar curvature}
	R_g=2(n-1)e^{-2u}(-\Delta u-\frac{n-2}{2}|\nabla u|^2).
\end{equation} This  equation \eqref{Q-curvature }  holds significant importance across various fields.
Of particular note, when $Q(x)=1$ and the finite volume assumption i.e. $e^{nu}\in L^1(\mr^n)$  holds, classification theorems outlined in works such as \cite{CL91}, \cite{Lin}, \cite{WX}, \cite{Xu JFA} and \cite{Mar MZ} are highly instrumental in tackling many related problems. We define a conformally flat manifold $(\mr^n,e^{2u}|dx|^2)$ to have finite total Q-curvature if
\begin{equation}\label{finite total Q-curvautre}
	\int_{\mr^n}|Q_g|e^{nu}\ud x<+\infty.
\end{equation}
For simplicity, we set a normalized integrated Q-curvature as
$$\alpha_0:=\frac{2}{(n-1)!|\mathbb{S}^n|}\int_{\mr^n}Q_ge^{nu}\ud x$$
throughout this paper where  $|\mathbb{S}^n|$ denotes  the volume of standard sphere in $\mathbb{R}^{n+1}$.

The assumption of finite total Q-curvature is particularly useful as it allows us to represent solutions to \eqref{Q-curvature } as logarithmic potentials known as normal solutions (see Definition \ref{def: normal solution}) in certain cases. Normal solutions play a crucial role in various works, including \cite{Fin}, \cite{Hu}, \cite{CQY}, \cite{Chang-Q-Yang2}, and many others. Furthermore, if $u$ is a normal solution, the conformal metric $g=e^{2u}|dx|^2$ is considered as a normal metric. Throughout this paper, we just focus on the smooth metric $g=e^{2u}|dx|^2$. For general complete locally conformally flat manifolds with simple ends, we will discuss in our forthcoming paper.
 
For the sake of convenience, we use the notation $B_R(p)$ to refer to an Euclidean  ball in $\mr^n$ centered at $p\in\mr^n$ with a radius of $R$. Moreover, we use $V_g(\cdot)$ to denote the measure with respect to the metric $g=e^{2u}|dx|^2$, and $d_g(\cdot,\cdot)$ represents the geodesic distance. Also, $|B_R(p)|$ refers to the volume of $B_R(p)$ in terms of the Euclidean metric.  For a function $\varphi(x)$, the positive part of $\varphi(x)$ is denoted as $\varphi(x)^+$ 
and the negative part of $\varphi(x)$ is denoted as $\varphi(x)^-$.  Set
$\fint_{E}\varphi(x)\ud x=\frac{1}{|E|}\int_{E}\varphi(x)\ud x$
for any measurable set $E$.  For a constant $C$, $C^+$ denotes $C$ if $C\geq 0$, otherwise, $C^+=0$. 
 Here and thereafter, we denote by $C$ a constant which may be different from line to line. For $s\in \mr$, $[s]$ denotes the largest integer not greater than $s$.

In their work \cite{CQY}, Chang, Qing and Yang derived a Chern-Gauss-Bonnet formula for a complete manifold $(\mr^4,e^{2u}|dx|^2)$ generalizing  Cohn-Vossen inequality \eqref{huber's inequailty} and Fin's identity \eqref{Fin's identity} to higher dimensional manifolds with help of Q-curvature. 	Supposing  $g=e^{2u}|dx|^2$ is a  complete metric on $\mr^4$ with finite total Q-curvature and $R_g\geq 0$ near infinity, then there holds
\begin{equation}\label{CQY indentity}
	1-\frac{1}{8\pi^2}\int_{\mr^4}Q_ge^{4u}\ud x=\lim_{R\to\infty}\frac{V_g(\partial B_R(0))^{\frac{4}{3}}}{4(2\pi^2)^{\frac{1}{3}}V_g(B_R(0))}.
\end{equation}
In fact, the results hold for higher order cases as well, and references such as \cite{Chang-Q-Yang2}, \cite{Fa}, and \cite{NX} can provide more information. It's worth noting that the right side of the aforementioned identity represents an isoperimetric ratio.   Bonk, et al. \cite{BHS} and Wang (\cite{Wang IMRN}, \cite{Wang}) have studied the isoperimetric inequality on such manifolds, under additional assumptions on the value of the integrated Q-curvature, using the properties of the strong $A_\infty$ weight, which will be discussed in Section \ref{3}. Similar results can also be found in \cite{LT}, \cite{ACT}, \cite{Xiao} and the references therein.

In several notable works, including \cite{Mann}, \cite{LW}, and the references therein, the concept of "volume entropy" has been utilized to characterize various properties of manifolds. In this paper, we introduce a new volume entropy $\tau(g)$ associated with the conformal background metric, which differs from the definition given in \cite{Mann}. The volume entropy $\tau(g)$ of the metric $g=e^{2u}|dx|^2$ is defined as follows:
\begin{equation}\label{def: volume entropy}
	\tau(g):=\lim_{R\to\infty}\sup\frac{\log V_g(B_R(0))}{\log|B_R(0)|}.
\end{equation}
We say the volume entropy of the metric $g$ is finite if $\tau(g)<+\infty.$
As mentioned earlier, the  normal metric plays a crucial role in dealing with conformally flat manifolds. Unlike the approach in \cite{CQY}, which relies on restrictions regarding the sign of scalar curvature near infinity, our first result employs the concept of volume entropy. By utilizing volume entropy, we establish  a necessary and sufficient condition for a complete metric with finite total Q-curvature to be classified as a normal metric.
\begin{theorem}\label{thm: finite volume entropy and normal metric}
	Consider a conformally flat and  complete manifold $(M,g)=(\mr^n,e^{2u}|dx|^2)$  with finite total Q-curvature  where   $n\geq 2$ is an even integer. 
	\begin{enumerate}[(i)]
		\item The metric  $g$ is normal if and only if the volume entropy $\tau(g)$ is finite. 
		\item If the volume entropy $\tau(g)$ is finite, there holds
		\begin{equation}\label{tau (g) identity}
			\tau(g)=1-\alpha_0.
		\end{equation}
	\end{enumerate}

\end{theorem}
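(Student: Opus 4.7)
The plan is to split Theorem~\ref{thm: finite volume entropy and normal metric} into a tractable part comprising (ii) and the ``if'' direction of (i), both of which drop out of a direct computation on normal solutions, and the harder ``only if'' direction of (i), which requires a Liouville-type rigidity argument for polyharmonic functions.

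For the easy part, assume $g$ is normal so that
\begin{equation*}
u(x) \;=\; \frac{1}{\gamma_n}\int_{\mr^n}\log\frac{|y|}{|x-y|}\,Q_g(y)e^{nu(y)}\,dy \,+\, C,\qquad \gamma_n := \tfrac{1}{2}(n-1)!|\mathbb{S}^n|.
\end{equation*}
Splitting the kernel over $\{|y|\leq |x|/2\}$ and its complement and applying dominated convergence (using finite total Q-curvature) yields the asymptotic $u(x) = -\alpha_0\log|x| + O(1)$ as $|x|\to\infty$. Completeness of $g$ then forces $\alpha_0\leq 1$: along a ray from the origin the geodesic length is comparable to $\int^\infty r^{-\alpha_0}\,dr$, which must diverge. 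A polar-coordinate computation of $V_g(B_R(0))=\int_{B_R}e^{nu}\,dx$ then gives $V_g(B_R)\asymp R^{n(1-\alpha_0)}$ when $\alpha_0<1$ and $V_g(B_R)\asymp\log R$ when $\alpha_0=1$; in both cases $\log V_g(B_R)/\log|B_R|\to 1-\alpha_0$, which delivers simultaneously the finiteness of $\tau(g)$ and the identity \eqref{tau (g) identity}. Combined with the ``only if'' of (i) proved below, this also yields (ii) as stated.

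For the hard direction, I would decompose $u=v+h$ where $v$ is the log potential $\frac{1}{\gamma_n}\int\log\frac{|y|}{|x-y|}Q_g e^{nu}\,dy$ (well-defined under finite total Q-curvature) and $h:=u-v$, which is smooth with $(-\Delta)^{n/2}h=0$ on $\mr^n$. The goal reduces to showing $h\equiv$ constant. Standard estimates on log potentials give $|v(x)|\leq C\log(1+|x|)+C'$, and combined with the polynomial bound $V_g(B_R)\leq CR^{n(\tau(g)+\varepsilon)}$ coming from the finite entropy assumption, this yields
\begin{equation*}
\int_{B_R(0)} e^{nh(x)}\,dx \;\leq\; C R^N
\end{equation*}
for some $N>0$. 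The heart of the proof is a Liouville-type rigidity for $h$. I would combine Pizzetti's identity
\begin{equation*}
\fint_{B_R(x_0)} h(y)\,dy \;=\; \sum_{k=0}^{n/2-1} a_k R^{2k}(\Delta^k h)(x_0),\qquad a_k>0,
\end{equation*}
with Jensen's inequality applied to $t\mapsto e^{nt}$ to bound the left-hand side by $\tfrac{N-n}{n}\log R + C$ uniformly in $x_0$. Matching the leading power of $R$ forces $(\Delta^{n/2-1}h)(x_0)\leq 0$; since $\Delta^{n/2-1}h$ is harmonic, Liouville makes it a non-positive constant. To propagate downward through $\Delta^{n/2-2}h,\ldots,\Delta h,h$, I would use the Almansi decomposition $h=\sum_{k=0}^{n/2-1}|x|^{2k}h_k$ with each $h_k$ harmonic, and rule out strictly negative constants at each stage via completeness of $g$: a negative leading $\Delta^k h$ would force $h$ to behave like a negative polynomial of even degree at infinity, contradicting the lower bound $h(tp)\geq -(1-\alpha_0)\log t - C$ along each ray $tp$, $t\to\infty$, which completeness provides. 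When the chain collapses, $h$ is harmonic; then $e^{nh}$ is subharmonic and the sub-mean-value inequality together with the integral bound gives an upper log bound for $h$, while the ray lower bound gives the matching lower log bound, so $h$ is a constant by classical Liouville. The main obstacle is the bookkeeping in this iterative descent, particularly the careful control of the harmonic parts of the intermediate Almansi components when some $\Delta^k h$ is a negative constant.
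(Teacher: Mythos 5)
Your overall architecture matches the paper's: show that a normal complete metric has $\tau(g)=1-\alpha_0$, and conversely decompose $u=\mathcal{L}(Q_ge^{nu})+h$ with $(-\Delta)^{n/2}h=0$ and use volume growth plus completeness to force $h\equiv\mathrm{const}$. However, several of your key steps rest on pointwise asymptotics that are false under the stated hypotheses. The logarithmic potential satisfies only
$\mathcal{L}(f)(x)=(-\alpha+o(1))\log|x|+\gamma_n^{-1}\int_{B_1(x)}\log\frac{1}{|x-y|}f(y)\,dy$,
and the local term is \emph{not} $O(1)$ when $f=Q_ge^{nu}$ is merely smooth and $L^1$ (think of tall thin spikes escaping to infinity); it is only controlled after additional hypotheses such as compact support of $f^{\pm}$. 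Consequently your claims that $u(x)=-\alpha_0\log|x|+O(1)$, that $|v(x)|\le C\log(1+|x|)+C'$, and hence that $V_g(B_R)\asymp R^{n(1-\alpha_0)}$ by a polar-coordinate computation, that ray lengths are comparable to $\int^{\infty}r^{-\alpha_0}dr$, and that $\int_{B_R}e^{nh}\le CR^N$, are all unjustified. The paper replaces each of these with averaged statements (means of $\mathcal{L}(f)$ over balls $B_{r_0}(x)$, $B_{r_1|x|}(x)$, $B_{|x|^{-r_2}}(x)$) and with Jensen's-inequality arguments applied to the local term of the potential, e.g.\ the two-sided bound $\fint_{B_{R+1}\setminus B_{R-1}}e^{n\mathcal{L}(f)}=R^{-n\alpha+o(1)}$ for the volume entropy, and the key estimate $\int_{|x|-1/2}^{|x|+1/2}e^{u(\gamma(t))}dt\le|x|^{-\alpha_0+o(1)}$ for the length along rays. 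Also note that when $\alpha_0=1$ the volume need not grow like $\log R$ (Section 6 of the paper gives examples with finite volume); only the logarithmic ratio tends to $0$.

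The descent argument for $h$ has a second, independent gap. Your Pizzetti--Jensen step needs $\int_{B_R}e^{nh}\le CR^N$, which you cannot extract from $\int_{B_R}e^{nu}\le CR^{N'}$ without a pointwise \emph{lower} bound on $v=\mathcal{L}(Q_ge^{nu})$, unavailable as above. The paper instead bounds $\int_{B_R}(u-v)^+$ by $\int_{B_R}q^{-1}e^{qu}+\int_{B_R}|v|$, uses H\"older together with $\int_{B_R}|\mathcal{L}(f)|=O(R^n\log R)$, and invokes a Liouville-type theorem (Pizzetti plus derivative estimates) to conclude that $h$ is a polynomial of degree at most $n-2$; a separate contradiction with polynomial volume growth gives $\sup h<\infty$. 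Finally, your appeal to ``the lower bound $h(tp)\ge-(1-\alpha_0)\log t-C$ which completeness provides'' is not something completeness provides: completeness gives that every divergent ray has infinite $g$-length, and the correct contradiction is that a non-constant $h\le C$ of degree $\le n-2$ satisfies $h(\gamma(t))\le -Ct^{2k}$ along some ray, which combined with the integrated estimate $\int_t^{t+1}e^{v(\gamma(s))}ds\le t^{-\alpha_0+\epsilon}$ makes that ray have finite length. Your plan is repairable, but only by replacing every pointwise estimate with the corresponding averaged or Jensen-type estimate, which is precisely the technical content of Sections 2 and 3 of the paper.
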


\begin{remark}
In \cite{Hu}, for the two-dimensional case, the finite total Gaussian curvature assumption and the  completeness of the metric are already sufficient to ensure the normal metric. However, for higher dimensional cases, the finite total Q-curvature is not enough to ensure that (refer to Remark 1.2 in \cite{CQY}), mainly because the complexity of the kernel of $P_g$. Besides, the completeness  of  metric is also crucial since there exist non-normal solutions with finite total Q-curvature and finite volume entropy(See \cite{Chang-Chen}, \cite{Hyder}, \cite{WY}).  In \cite{Zhang}, Zhang showed that the metric is normal under the assumption  on  the decay rate of Q-curvature near infinity.
\end{remark}
\begin{corollary}\label{cor}
	Consider a conformally flat and  complete manifold $(M,g)=(\mr^n,e^{2u}|dx|^2)$  with finite total Q-curvature  where   $n\geq 2$ is an even integer.  If the volume entropy $\tau(g)$ is finite and $Q_g\geq 0$, then
	$$\tau(g)\leq 1$$
	with equality holds if and only if $u\equiv C.$
\end{corollary}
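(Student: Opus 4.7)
The plan is to read off the corollary as a direct consequence of Theorem \ref{thm: finite volume entropy and normal metric}, with a small extra argument for the rigidity statement. Since $\tau(g)$ is assumed finite, part (i) of the theorem gives that $g$ is a normal metric and part (ii) gives the identity $\tau(g)=1-\alpha_0$. Under the sign assumption $Q_g\geq 0$ we have $\int_{\mr^n}Q_ge^{nu}\ud x\geq 0$, hence $\alpha_0\geq 0$ and therefore $\tau(g)\leq 1$. This disposes of the inequality.

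For the equality case, if $\tau(g)=1$ then $\alpha_0=0$, which combined with $Q_g\geq 0$ and $e^{nu}>0$ forces $Q_g\equiv 0$ pointwise on $\mr^n$. Now I would invoke the normal metric property: by definition, $u$ admits the logarithmic potential representation
\begin{equation*}
u(x)=\frac{1}{\gamma_n}\int_{\mr^n}\log\frac{|y|}{|x-y|}\,Q_g(y)e^{nu(y)}\ud y+C
\end{equation*}
(for the appropriate dimensional constant $\gamma_n$ built from $(n-1)!|\mathbb{S}^n|$ as in Definition \ref{def: normal solution}). With $Q_g\equiv 0$ the integral vanishes identically, so $u\equiv C$, which is the required conclusion. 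Conversely, if $u$ is constant then $g$ is a scaled Euclidean metric, $V_g(B_R(0))$ grows like $|B_R(0)|$, and $\tau(g)=1$ trivially.

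The only place any thought is required is the implication $Q_g\equiv 0 \Rightarrow u\equiv \text{const}$, and this is genuinely automatic once normality is in hand; it is precisely the reason that the theorem was phrased through the normal metric characterization. No sharp estimate, integration by parts, or curvature computation is needed beyond what is already packaged in Theorem \ref{thm: finite volume entropy and normal metric}, so I do not anticipate a real obstacle in writing up this corollary.
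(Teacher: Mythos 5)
Your argument is correct and is essentially the paper's own proof: both invoke Theorem \ref{thm: finite volume entropy and normal metric} to get normality and the identity $\tau(g)=1-\alpha_0$, deduce $\tau(g)\leq 1$ from $Q_g\geq 0$, and in the equality case conclude $Q_g\equiv 0$ and hence $u\equiv C$ from the logarithmic potential representation. No issues.
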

In Li and Tam's  work \cite{LT}, they focus on a key geometric result concerning the behavior of geodesic distance at infinity when compared to a flat metric background. Their main objective is to prove Yau's conjecture \cite{Yau} regarding the dimensions of polynomial growth of harmonic functions. For more detailed information, interested readers can refer to \cite{Li-Yau}, \cite{CM Ann}, \cite{CM}, and \cite{CCM}.  For higher dimensional cases, we also obtain a distance comparison identity under the normal metric assumption.
\begin{theorem}\label{thm: length comparison identity}
	Consider a conformally flat manifold $(M,g)=(\mr^n,e^{2u}|dx|^2)$  with finite total Q-curvature  where   $n\geq 2$ is an even integer.  Supposing that the metric $g$ is normal, then for each fixed point $p$, there holds
$$ \lim_{|x|\to\infty}\frac{\log d_g(x,p)}{\log|x-p|}=\left(1-\alpha_0\right)^+.$$
Moreover, if 
$\alpha_0>1$,
one has 
$$\mathrm{diam}(\mr^n,e^{2u}|dx|^2)<+\infty$$
where the diameter is defined by  $\mathrm{diam}(M,g):=\sup_{x,y\in M} d_g(x,y).$
\end{theorem}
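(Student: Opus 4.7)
The plan is to extract sharp asymptotics for the conformal factor $u$ from its normal integral representation and then translate them into two-sided estimates for $d_g$. Since $g$ is normal,
\begin{equation*}
u(x) = \frac{2}{(n-1)!|\mathbb{S}^n|}\int_{\mathbb{R}^n}\log\frac{|y|}{|x-y|}\,Q_g(y)e^{nu(y)}\,dy + C.
\end{equation*}
The first step is to establish $u(x) = -\alpha_0\log|x| + o(\log|x|)$ as $|x|\to\infty$, holding pointwise off a thin exceptional set. I would split the integrand over $B_{|x|/2}(x)$ and its complement. Outside, $\log\frac{|y|}{|x-y|}$ is close to $\log\frac{|y|}{|x|}$ after easy estimates, and integration against $Q_ge^{nu}$ extracts $-\alpha_0\log|x|+O(1)$. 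Inside $B_{|x|/2}(x)$, finite total $Q$-curvature yields $\int_{B_{|x|/2}(x)}|Q_g|e^{nu}\to 0$; the singular factor $\log\frac{1}{|x-y|}$ is tamed by a Chebyshev-type estimate, contributing $o(\log|x|)$ away from a set of small Euclidean density.

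For the upper bound on $d_g(x,p)$, I would integrate $e^u$ along the Euclidean segment from $p$ to $x$:
\begin{equation*}
d_g(x,p)\leq\int_0^{|x-p|}e^{u(p+t(x-p)/|x-p|)}\,dt.
\end{equation*}
For almost every direction the segment avoids the exceptional set, so the integrand is majorized by $t^{-\alpha_0+o(1)}$. This yields $d_g(x,p)\leq C|x-p|^{1-\alpha_0+o(1)}$ for $\alpha_0<1$, an $O(\log|x-p|)$ bound for $\alpha_0=1$, and a uniform constant for $\alpha_0>1$. In the last case, an analogous estimate for the $g$-length of great-circle arcs on large spheres (which scales as $r^{1-\alpha_0+o(1)}\to 0$) furnishes a uniformly bounded $g$-path between any two far-away points by a radial–spherical–radial construction, establishing the claimed diameter finiteness.

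The main obstacle is the matching lower bound $d_g(x,p)\gtrsim |x-p|^{1-\alpha_0-o(1)}$ for $\alpha_0<1$, where purely pointwise control of $u$ from below is unavailable. For any rectifiable path $\sigma$ joining $p$ and $x$, I would use a radial projection to get
\begin{equation*}
L_g(\sigma)\geq\int_{|p|}^{|x|}\inf_{\theta\in\mathbb{S}^{n-1}}e^{u(r\theta)}\,dr
\end{equation*}
after discarding a set of radii of negligible measure and accounting for non-monotonicity. A coarea/Chebyshev-type argument applied to $u+\alpha_0\log|y|$ on spherical shells then shows that $\inf_\theta e^{u(r\theta)}\geq c\,r^{-\alpha_0-o(1)}$ holds on a set of radii of full density, which after integration yields the desired bound. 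A softer alternative is a contradiction argument leveraging Theorem \ref{thm: finite volume entropy and normal metric}: a geodesic distance growing too slowly would force $g$-balls to engulf Euclidean balls of much larger radius, producing volumes incompatible with $\tau(g)=1-\alpha_0$. With both bounds in place the limit equals $(1-\alpha_0)^+$, and the finite diameter for $\alpha_0>1$ follows at once from the upper bound work.
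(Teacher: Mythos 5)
Your upper bound follows the paper's route (integrating $e^{u}$ along the radial segment), but as written it has a soft spot: you obtain the majorization $e^{u(\gamma(t))}\leq t^{-\alpha_0+o(1)}$ only for segments avoiding an exceptional set, valid for ``almost every direction,'' whereas the limit must be established for \emph{every} $x\to\infty$, including points lying in bad directions. The paper needs no exceptional set: it keeps the local singular term $\int_{B_{1/4}(z)}\log\frac{1}{|z-y|}Q_g^+e^{nu}\,dy$ in the pointwise upper bound for $u$, converts its exponential into the line-integrable kernel $|z-y|^{-1/2}$ by Jensen's inequality (using that $\|Q_g^+e^{nu}\|_{L^1(B_1(x))}$ is eventually small), and integrates along the given radial segment by Fubini, yielding \eqref{cliam for distance} for all large $|x|$. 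The finite-diameter claim for $\alpha_0>1$ then follows from the triangle inequality through $p$; your radial--spherical--radial construction is unnecessary.

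The genuine gap is the lower bound when $\alpha_0<1$. The radial-projection inequality $L_g(\sigma)\geq\int\inf_{\theta}e^{u(r\theta)}\,dr$ requires a pointwise lower bound on $u$ over \emph{entire} spheres for a full-density set of radii, and no such bound is available: only the upper bound $u\leq(-\alpha_0+o(1))\log|x|+(\text{local term})$ holds pointwise, while from below $u$ can have arbitrarily deep negative spikes wherever $Q_g^-e^{nu}$ concentrates (compare Lemma \ref{lem:Q^+ and Q^-}, which requires $Q_g^-$ to be compactly supported). A Chebyshev estimate controls the Lebesgue measure of the bad set, but that set can be a thin tube meeting every large sphere, in which case $\inf_\theta e^{u(r\theta)}$ is uselessly small for \emph{every} $r$ and a competitor path can thread through the tube. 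Your ``softer alternative'' also does not close the gap: a single point $x$ with $d_g(x,p)$ too small does not force a geodesic ball to contain a large Euclidean ball, and in any case $\tau(g)$ as defined constrains the $g$-volume of \emph{Euclidean} balls, not geodesic balls, so no volume contradiction arises. The paper's lower bound rests on a genuinely different input: for $\alpha_0<1$ it invokes Corollary 1.7 of \cite{Wang IMRN}, asserting that $e^{nu}$ is a strong $A_\infty$ weight, hence $C^{-1}\delta(x,p)\leq d_g(x,p)\leq C\delta(x,p)$, and then computes $\lim\log\delta(x,p)/\log|x-p|=(1-\alpha_0)^+$ via Lemma \ref{lem: inf V-g/V_0}. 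Some such mechanism controlling paths through regions where $u$ is very negative appears unavoidable, and your proposal does not supply one.
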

In \cite{CQY}, Chang, Qing and Yang demonstrated that if the metric is normal and complete, then $\alpha_0 \leq 1$. It is interesting to consider whether we can provide a restriction on $\alpha_0$ and deduce the completeness of the metric. In the case of two dimensions, Aviles provided a result (see Theorem C in \cite{Av}). Here, we can utilize Theorem \ref{thm: length comparison identity} to demonstrate that the metric is complete if $\alpha_0 < 1$.
\begin{corollary}\label{cor: reversed complete}
	Consider a conformally flat manifold $(M,g)=(\mr^n,e^{2u}|dx|^2)$  with finite total Q-curvature  where   $n\geq 2$ is an even integer.  Suppose that the metric $g$ is normal. If  $\alpha_0<1$, then the metric $g$ is complete.
\end{corollary}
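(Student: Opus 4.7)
The plan is to deduce completeness as a direct corollary of Theorem \ref{thm: length comparison identity}, which does not itself assume completeness in its hypotheses. The crucial observation is that under the assumption $\alpha_0 < 1$, one has $(1-\alpha_0)^+ = 1-\alpha_0 > 0$, so for any fixed point $p\in\mr^n$ the theorem yields
$$\lim_{|x-p|\to\infty}\frac{\log d_g(x,p)}{\log|x-p|} = 1-\alpha_0 > 0.$$
In particular, fixing for instance $\varepsilon=(1-\alpha_0)/2$, there exists $R_0$ such that $d_g(x,p) \geq |x-p|^{\,\varepsilon}$ for all $|x-p|\geq R_0$, so $d_g(x,p)\to\infty$ as $|x-p|\to\infty$.

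The next step is to upgrade this to metric completeness. I would take an arbitrary $d_g$-Cauchy sequence $\{x_k\}\subset\mr^n$. Such a sequence is $d_g$-bounded, and the estimate above then forces it to be Euclidean-bounded as well, so $\{x_k\}\subset \overline{B_R(p)}$ for some $R>0$. On the compact set $\overline{B_R(p)}$ the smooth function $u$ attains a finite maximum $C$, so the Euclidean segment between any two points $x,y\in \overline{B_R(p)}$ gives an admissible curve realizing
$$d_g(x,y) \leq e^{C}|x-y|.$$
It follows that $\{x_k\}$ is also Cauchy in the Euclidean distance, hence converges Euclideanly to some $x_\infty\in\overline{B_R(p)}\subset\mr^n$, and the bound above then gives $d_g(x_k,x_\infty)\to 0$. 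This establishes metric completeness; geodesic completeness then follows from Hopf--Rinow since $g$ is smooth.

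There is no serious obstacle to anticipate; the only conceptual point worth flagging is that Theorem \ref{thm: length comparison identity} is genuinely a statement about the smooth metric $g$ without a completeness assumption, so invoking it here is not circular. All remaining work is a routine combination of the polynomial distance comparison with the standard fact that, on Euclidean-bounded regions, $g$ is bi-Lipschitz equivalent to the Euclidean metric.
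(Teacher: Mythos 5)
Your proposal is correct and follows exactly the route the paper intends: the corollary is stated as a direct consequence of Theorem \ref{thm: length comparison identity}, whose proof does not assume completeness, and your deduction that $\alpha_0<1$ forces $d_g(x,p)\to\infty$ as $|x-p|\to\infty$, hence Euclidean-boundedness of $d_g$-Cauchy sequences and then convergence via local equivalence of the metrics, is the argument the paper leaves to the reader. One small slip: the inequality $d_g(x,y)\le e^{C}|x-y|$ alone does not yield that a $d_g$-Cauchy sequence is Euclidean-Cauchy (that needs the reverse bound $d_g(x,y)\ge c|x-y|$ on bounded sets, which you do invoke at the end via bi-Lipschitz equivalence); alternatively, compactness of $\overline{B_R(p)}$ plus the upper bound already gives a $d_g$-convergent subsequence, which suffices for a Cauchy sequence.
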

\begin{remark}
	If $\alpha_0 = 1$, the situation becomes more subtle. The metric can be either complete or non-complete. In Section \ref{section: example}, we will construct some examples to confirm this. 
\end{remark}

In the context of higher-dimensional manifolds $(\mathbb{R}^n, e^{2u}|dx|^2)$, it is natural to consider the kernel of the GJMS operator $P_g$ on such manifolds and we refer to these kernel functions as polyharmonic functions.  Precisely, for  $d\geq0$,  we define $\mathcal{PH}_d(M,g)$ as the linear space of the kernel function of $P_g$  of polynomial growth at most $d$: for some fixed point $p\in M$
$$\mathcal{PH}_d(M,g):=\{f(x)|P_g f(x)=0, |f(x)|\leq C(d_g(x,p)^d+1)\}.$$
Similarly, define the polynomial growth polyharmonic functions on $\mr^n$ respect to standard Euclidean metric as follows:
$$\mathcal{PH}_d(\mr^n,|dx|^2):=\{f(x)|(-\Delta)^{\frac{n}{2}}f=0,|f(x)|\leq C(|x|^d+1)\}.$$ 
With help of a Liouville-type theorem, it is well-known that such linear  spaces consists of polynomials  (Seeing   \cite{ABR},\cite{ACL}) and the dimension of such linear space is finite for each $d\geq 0$. 

Taking inspiration from \cite{LT} in the case of two dimensional surfaces, our objective is to establish a framework for analyzing the linear spaces $\mathcal{PH}_d(M,g)$ of polynomial growth polyharmonic functions on conformally flat manifolds. It is important to note that the methodology employed in \cite{LT} may not be directly applicable to higher dimensional scenarios. Instead, our current approach builds upon the foundation laid in our previous work in \cite{Li23}. Similar to Theorem 4.6 in \cite{LT}, if $Q_g$ is non-negative outside a compact set, we will obtain an identity. 
Moreover, if $Q_g\geq 0$, we are able to derive a rigidity result similar to the findings presented in \cite{CCM} for manifolds with non-negative Ricci curvature.
 Through the utilization of Fin's identity  \eqref{Fin's identity} and \eqref{CQY indentity},  we know that the isoperimetric ratio vanishes when the Cohn-Voseen inequality \eqref{huber's inequailty} achieves the identity. Here, we would like to  provide an alternative characterization using polynomial growth polyharmonic functions.
\begin{theorem}\label{thm: PH dimension is finite }
Consider a conformally flat and complete manifold $(M,g)=(\mathbb{R}^n,e^{2u}|dx|^2)$, where $n\geq 2$ is an even integer, and the total Q-curvature is finite. Assume that the volume entropy $\tau(g)$ is finite. Then the  following results are established:
\begin{enumerate}[(i)]
	\item For each $d\geq 0$,  there holds 
	$$\dim\left(\mathcal{PH}_d(M,g)\right)\leq\dim(\mathcal{PH}_{d\tau(g)}(\mr^n,|dx|^2)).$$
	\item If $Q_g$ is non-negative outside a compact set and $d\geq0$, there holds
	\begin{equation}\label{identiy of the dimension}
		\dim(\mathcal{PH}_d(M,g))=\dim(\mathcal{PH}_{d\tau(g)}(\mr^n,|dx|^2)).
	\end{equation}
	\item If $Q_g\geq 0$ everywhere,  for each integer $k\geq 1$,
	$$\dim(\mathcal{PH}_k(M,g))\leq \dim(\mathcal{PH}_{k}(\mr^n,|dx|^2))$$
	with equality holds if and only if $u\equiv C.$
	
	\item The volume entropy  $\tau(g)=0$  if and only if for each $d\geq 0$,
	$\mathcal{PH}_d(M,g)=\{\mathrm{constant \;functions}\}.$
	
\end{enumerate}

\end{theorem}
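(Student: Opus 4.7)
The plan rests on the conformal invariance of the critical-order GJMS operator. On $(M, g) = (\mathbb{R}^n, e^{2u}|dx|^2)$ one has $P_g f = e^{-nu} (-\Delta)^{n/2} f$, so $f$ lies in the kernel of $P_g$ if and only if $(-\Delta)^{n/2} f = 0$ on $\mathbb{R}^n$. Combined with the Liouville-type theorem cited above, every element of $\mathcal{PH}_d(M, g)$ is therefore a Euclidean polynomial, and the substantive task is to translate the growth condition measured in $d_g$ into one measured in $|x|$, using Theorem \ref{thm: length comparison identity}.

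For (i), given $f \in \mathcal{PH}_d(M, g)$, I would invoke Theorem \ref{thm: length comparison identity} in the form that for every $\epsilon > 0$ there is $R_0$ with $d_g(x, p) \leq |x - p|^{\tau(g) + \epsilon}$ whenever $|x - p| \geq R_0$, whence
\begin{equation*}
|f(x)| \leq C\bigl(d_g(x, p)^d + 1\bigr) \leq C\bigl(|x - p|^{d(\tau(g) + \epsilon)} + 1\bigr).
\end{equation*}
Since $f$ is a polynomial, letting $\epsilon \to 0$ forces $\deg f \leq d\tau(g)$, yielding the injection $\mathcal{PH}_d(M, g) \hookrightarrow \mathcal{PH}_{d\tau(g)}(\mathbb{R}^n, |dx|^2)$ and the claimed dimension bound.

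Part (ii) requires the reverse inclusion, and this is the main obstacle. It demands a matching one-sided comparison $d_g(x, p) \geq C^{-1}|x - p|^{\tau(g)}$ at infinity, which is strictly stronger than the asymptotic statement of Theorem \ref{thm: length comparison identity}. My plan is to exploit the hypothesis that $Q_g$ is non-negative outside a compact set: using the logarithmic-potential representation of the normal solution and the fact that the tail of $Q_g e^{nu}$ has a definite sign, one upgrades the leading expansion $u(x) = -\alpha_0 \log|x| + o(\log|x|)$ to the one-sided pointwise bound $u(x) \geq -\alpha_0 \log|x| - C$ for $|x|$ large, and integrating $e^u$ along rays emanating from $p$ then furnishes $d_g(x, p) \geq C^{-1}|x - p|^{1 - \alpha_0}$. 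With this two-sided comparison, any Euclidean polyharmonic polynomial $P$ of degree $k \leq d\tau(g) = d(1 - \alpha_0)$ satisfies $|P(x)| \leq C|x|^k \leq C d_g(x, p)^{k/(1 - \alpha_0)} \leq C d_g(x, p)^d$, so $P \in \mathcal{PH}_d(M, g)$ and equality of dimensions follows. Upgrading Theorem \ref{thm: length comparison identity} from an asymptotic to a uniform multiplicative estimate is the technical heart of the argument, and the sign condition on $Q_g$ at infinity is precisely what makes this upgrade possible.

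Parts (iii) and (iv) then follow by combining (i) and (ii) with earlier rigidity. For (iii), Corollary \ref{cor} gives $\tau(g) \leq 1$ with equality iff $u$ is constant; inserting the identity from (ii) yields $\dim \mathcal{PH}_k(M, g) = \dim \mathcal{PH}_{k\tau(g)}(\mathbb{R}^n, |dx|^2) \leq \dim \mathcal{PH}_k(\mathbb{R}^n, |dx|^2)$, with equality iff $\lfloor k\tau(g) \rfloor \geq k$, iff $\tau(g) = 1$, iff $u$ constant. For (iv), the forward implication is immediate from (i), since $\tau(g) = 0$ gives $\dim \mathcal{PH}_d(M, g) \leq \dim \mathcal{PH}_0(\mathbb{R}^n, |dx|^2) = 1$. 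For the converse, if $\tau(g) > 0$ then Theorem \ref{thm: length comparison identity} with $\epsilon = \tau(g)/2$ gives $d_g(x, p) \geq |x - p|^{\tau(g)/2}$ at infinity, so each Euclidean linear coordinate $x_i - p_i$ satisfies $|x_i - p_i| \leq d_g(x, p)^{2/\tau(g)}$ outside a compact set and therefore lies in $\mathcal{PH}_d(M, g)$ for any $d \geq 2/\tau(g)$, producing a non-constant polynomial growth polyharmonic function.
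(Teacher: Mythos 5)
Your proposal follows the paper's argument in all essential respects: part (i) via the upper distance comparison of Theorem \ref{thm:d_g(x,p) and |x-p|} plus the Liouville-type theorem (Lemma \ref{lem: polynomial growth is polynomial}), part (iii) via $\tau(g)\leq 1$ and the strict monotonicity of $\dim\mathcal{PH}_m(\mr^n,|dx|^2)$ in $[m]$, and part (iv) exactly as the paper does (the paper even uses the same $\epsilon_1=\tau(g)/2$ and the coordinate function $x_1$). The one place you diverge is the key lower bound $d_g(x,p)\geq C|x|^{\tau(g)}$ in part (ii). The paper gets the pointwise bound $u\geq -\alpha_0\log|x|-C$ from Lemma \ref{lem:Q^+ and Q^-} (as you do), but then passes through the measure distance $\delta(x,p)$ and invokes Wang's result that $e^{nu}$ is a strong $A_\infty$ weight when $\alpha_0<1$, so that $d_g\geq C\delta$. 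You instead propose to estimate lengths directly; this is more elementary and avoids importing the strong $A_\infty$ machinery, but be careful with the phrasing ``integrating $e^u$ along rays emanating from $p$'': integrating along a chosen ray only bounds $d_g$ from \emph{above}. What you need is the standard radial-projection argument: for an arbitrary competitor path $\gamma$ from $p$ to $x$, one has $\int_\gamma e^u\,ds\geq C^{-1}\int_{\gamma\cap\{|y|\geq R_0\}}|y|^{-\alpha_0}\,|d|y||\geq C^{-1}\int_{R_0}^{|x|}t^{-\alpha_0}\,dt$, which gives the desired lower bound since $\alpha_0<1$ when $\tau(g)>0$ (the case $\tau(g)=0$ being trivial for the reverse inclusion). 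With that correction your route for (ii) is sound and arguably cleaner. Two minor points worth making explicit: the normality of $g$ (needed to invoke Theorems \ref{thm: length comparison identity} and \ref{thm:d_g(x,p) and |x-p|} and Lemma \ref{lem:Q^+ and Q^-}) comes from Theorem \ref{thm: finite volume entropy and normal metric} under the finite-entropy hypothesis, and the conclusion that $f$ is a Euclidean polynomial requires first establishing the polynomial bound in $|x|$ from the distance comparison before applying the Liouville theorem, so the order of those two steps in your opening paragraph should be reversed.
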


Here is a brief overview of the structure of this paper. In Section \ref{2}, we establish some properties of the logarithmic potential and give a decomposition result in Theorem \ref{thm:decomposition} under the assumption of finite volume entropy. We also provide an estimate for the volume entropy in Theorem \ref{thm: lambda leq 1-alpha}. As a practical application, we get a lower bound of the integrated Gaussian curvature by considering a finite volume assumption instead of the completeness of the metric in Theorem \ref{thm: reversed Cohn-Voseen} and Theorem \ref{thm: reversed Cohn-Voseen inequality for Q-curvature}.  In Section \ref{3}, we discuss some properties of strong $A_\infty$ weights and give the proof of Theorem \ref{thm: length comparison identity}.  Meanwhile, we derive a volume and length comparison identity in Theorem \ref{thm:d_g(x,p) and |x-p|} if the metric is normal and complete.  In Section \ref{section: iff normal solution}, we introduce a criterion for the solution \eqref{Q-curvature } to be normal solutions in Theorem \ref{thm: necessary and sufficient for normal solution} related to scalar curvature.  Meanwhile, we give the proofs of Theorem \ref{thm: finite volume entropy and normal metric} and Corollary \ref{cor}.    In Section \ref{section: pg pf}, we prove Theorem \ref{thm: PH dimension is finite }    with help of the geodesic distance comparion ideneity in Theorem \ref{thm:d_g(x,p) and |x-p|}. Finally, in Section \ref{section: example}, we provide some examples to help illustrate the subtle case when $\alpha_0=1$.

\section{Asymptotic behavior of logrithmic potential}\label{2}
To enhance our understanding of the logarithmic potential on non-compact manifolds, it is helpful to first discuss Green's representation on compact manifolds. 
 On a four-dimensional compact manifold $(M,g_0)$, if the kernel of the Paneitz operator $P_{g_0}$ consists of constants, the Green's function $G_{g_0}(x,y)$ exists (see Lemma 1.7 in \cite{CY95}), and we have a nice representation of the solution to the equation
$P_{g_0} u=\varphi(x)$
as 
$$u(x)-\frac{1}{V_{g_0}(M)}\int_Mu\ud\mu_{g_0} =\int_MG_{g_0}(x,y)\varphi(y)\ud \mu_{g_0}.$$
Equivalently, we can rewrite it as
\begin{equation}\label{compact Green function}
	u(x)=\int_M(G_{g_0}(x,y)-G_{g_0}(x_0,y))\varphi(y)\ud\mu_{g_0} +u(x_0).
\end{equation}
Returning to our current setting, we note that on $\mr^n$, the kernel of the polyharmonic Laplacian operator $(-\Delta)^{\frac{n}{2}}$ contains more than just constants, which brings significant difficulties to our research.
Consider the following equation for even integer $n\geq 2$ given by the equation
\begin{equation}\label{w equation}
	(-\Delta )^{\frac{n}{2}}w(x)=f(x),
\end{equation}
where $w(x)$ and $f(x)$ are both smooth functions on $\mathbb{R}^n$ with $f\in L^1(\mr^n)$. We aim to investigate whether the solutions to \eqref{w equation} can be represented analogously to \eqref{compact Green function} under certain assumptions. 
Actually, the Green's function of the polyharmonic Laplacian  operator $(-\Delta)^{\frac{n}{2}}$
satisfying (See Chapter 1.2 in \cite{ACL})
\begin{equation}\label{Green's function}
	(-\Delta)^{\frac{n}{2}}\left(\frac{2}{(n-1)!|\mathbb{S}^n|}\log\frac{1}{|x|}\right)=\delta_0(x)
\end{equation}
where  $\delta_0(x)$ is the  Dirac operator.
Compared with \eqref{compact Green function}, we naturally define a logarithmic potential as follows:
$$\mathcal{L}(f)(x)=\frac{2}{(n-1)!|\mathbb{S}^n|}\int_{\mr^n}\log\frac{|y|}{|x-y|}f(y)\ud y.$$
Since smooth $f\in L^1(\mr^n)$, it is not hard to check that the above  logrithmic potential is well-defined.
Our definition of the logarithmic potential differs slightly from a similar definition introduced in \cite{BHS}. We should point out that the logarithmic potential has been previously introduced and studied in various forms and special cases by many researchers, including in \cite{CL91}, \cite{Lin}, and other works.

\begin{definition}\label{def: normal solution}
	We say $w(x)$ is a normal solution to \eqref{w equation} if for some constant $C$,
	$$w(x)=\mathcal{L}(f)(x)+C.$$
\end{definition}

We introduce a quantity like the volume entropy $\tau(g)$ to measure the volume growth of $w(x)$ on $\mr^n$, denoted as $\mathcal{V}_{sup}(e^{nw})$ and defined by the formula:

$$\mathcal{V}_{sup}(e^{nw}):=\lim_{R\to\infty}\sup\frac{\log\int_{B_R(0)}e^{nw}\ud x}{\log |B_R(0)|}.$$
Similarly, we define $\mathcal{V}_{inf}(e^{nw})$ as:
$$\mathcal{V}_{inf}(e^{nw}):=\lim_{R\to\infty}\inf\frac{\log\int_{B_R(0)}e^{nw}\ud x}{\log |B_R(0)|}.$$
If $\mathcal{V}_{sup}(e^{nw})<+\infty$ with  $\mathcal{V}_{sup}(e^{nw})=\mathcal{V}_{inf}(e^{nw}),$ we say that the limit of volume growth exists and denote it as $\mathcal{V}(e^{nw})$.
The  function $w(x)$ is said to have polynomial volume growth if $\mathcal{V}_{sup}(e^{nw})$ is finite.  In fact, one may verify that $\mathcal{V}_{sup}(e^{nw})$ is finite if and only if there exists $s\geq 0$ such that 
$$\int_{B_R(0)}e^{nw}\ud x= O(R^s).$$

Assuming polynomial volume growth of $w(x)$,  we can decompose the solution $w(x)$ of equation \eqref{w equation} into the logarithmic potential and a polynomial. Furthermore, we can derive a necessary and sufficient condition for being normal solutions for \eqref{w equation}, which is of particular interest.
\begin{theorem}\label{thm:decomposition}
	Let $n\geq 2$ be an even integer and consider a smooth solution $w(x)$ to \eqref{w equation} with smooth $f\in L^1(\mathbb{R}^n)$. Suppose that $w(x)$  has  polynomial volume growth.
	Then we have a decomposition
	\begin{equation}\label{w=Lw+p}
		w(x)=\mathcal{L}(f)(x)+P(x)
	\end{equation}
	where $P(x)$ is a polynomial with degree at most $n-2$ and  $P(x)\leq C$ for some constant $C$.  
	
	Moreover,  for $n\geq 4$,  $P(x)\equiv C$ if and only if one of the following conditions holds
	\begin{enumerate}[(a)]
		\item 	\begin{equation}\label{Delta w=o(R^n)}
			\int_{B_R(0)}|\Delta w|\ud x=o(R^{n}).
		\end{equation}
		\item	\begin{equation}\label{w=o(R^n+2)}
			\int_{B_R(0)}| w|\ud x=o(R^{n+2}).
		\end{equation}
	\end{enumerate}
\end{theorem}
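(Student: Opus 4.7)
The plan is to subtract off the logarithmic potential $\mathcal{L}(f)$ and show that what remains is a polyharmonic polynomial of the right degree, bounded above. Set
$$ v(x) := w(x) - \mathcal{L}(f)(x). $$
By \eqref{Green's function}, $(-\Delta)^{n/2}\mathcal{L}(f) = f$ distributionally, so $(-\Delta)^{n/2} v = 0$ on $\mathbb{R}^n$, i.e., $v$ is an $\tfrac{n}{2}$-polyharmonic function. The polynomial $P$ in the asserted decomposition will be exactly $v$.

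The next step is to control $v^+$ in an $L^1$-averaged sense. For the logarithmic potential, splitting the integrand of $\mathcal{L}(f)(x)$ according to the regions $\{|y|\leq |x|/2\}$, $\{|x|/2 < |y| < 2|x|\}$, and $\{|y|\geq 2|x|\}$ and applying Fubini yields
$$ \int_{B_R(0)} |\mathcal{L}(f)(x)|\, \ud x = O(R^n \log R). $$
On the other hand, since $e^{nw^+} \leq 1 + e^{nw}$ has polynomial volume growth by hypothesis, Jensen's inequality on $B_R(0)$ gives $\fint_{B_R(0)} w^+\, \ud x \leq C\log R$, hence $\int_{B_R(0)} w^+\, \ud x = O(R^n \log R)$. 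Combining via $v^+ \leq w^+ + |\mathcal{L}(f)|$ yields
$$ \int_{B_R(0)} v^+\, \ud x = O(R^n \log R). $$

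I then invoke a Liouville-type theorem for polyharmonic functions (in the spirit of \cite{ACL}): an $m$-polyharmonic function on $\mathbb{R}^n$ whose positive part has $L^1$-growth $O(R^{n+d})$ is a polynomial of degree at most $\max(d, 2m-2)$. With $m = \tfrac{n}{2}$ and $d$ taken arbitrarily small to absorb the logarithmic factor, this forces $\deg v \leq n-2$. Once $v = P$ is known to be a polynomial of degree at most $n-2$, the bound on $\int_{B_R} P^+$ forces $P \leq C$, since any polynomial that is unbounded above has $\int_{B_R} P^+ \geq c R^{n+\delta}$ for some $c, \delta > 0$, which is incompatible with $O(R^n \log R)$. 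This establishes \eqref{w=Lw+p}.

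For the moreover part, condition (b) combined with the logarithmic growth estimate on $\mathcal{L}(f)$ yields $\int_{B_R(0)}|P|\, \ud x = o(R^{n+2})$. Since any nonzero polynomial of degree $d$ satisfies $\int_{B_R(0)}|P|\, \ud x \geq c R^{n+d}$, this forces $\deg P \leq 1$, which combined with $P \leq C$ gives $P$ constant. Under condition (a), the identity $\Delta_x\log|x-y| = (n-2)|x-y|^{-2}$ for $n \geq 3$ together with a Fubini computation yields $\int_{B_R}|\Delta\mathcal{L}(f)|\, \ud x = O(R^{n-2}) = o(R^n)$, hence $\int_{B_R}|\Delta P|\, \ud x = o(R^n)$. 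Since $\Delta P$ is itself a polynomial, this forces $\Delta P \equiv 0$, i.e., $P$ is harmonic; a harmonic polynomial bounded above is constant by the classical Liouville theorem.

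The main obstacle is identifying a polyharmonic Liouville theorem compatible with the one-sided $L^1$-averaged hypothesis and extracting the precise degree bound $n-2$; classical versions handle pointwise one-sided bounds, and adapting these to $L^1$-averages relies on generalized mean-value identities for polyharmonic functions. A secondary difficulty is the averaged estimate on $\mathcal{L}(f)$ for smooth $f \in L^1(\mathbb{R}^n)$ without any decay assumption beyond integrability.
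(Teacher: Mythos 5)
Your decomposition, the $O(R^n\log R)$ bound on $\int_{B_R}|\mathcal{L}(f)|$, the Jensen-based bound $\fint_{B_R}w^+\leq C\log R$ (which is in fact slightly cleaner than the paper's H\"older argument), and the invocation of a one-sided $L^1$ Liouville theorem to get $\deg P\leq n-2$ all track the paper's proof (the Liouville statement you need is exactly the paper's Lemma \ref{lem: polynomial growth is polynomial}, proved via Pizzetti's formula). The "moreover" directions are also essentially the paper's, modulo the omitted but immediate converse implications.

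The genuine gap is your derivation of the upper bound $P\leq C$. The claim that every polynomial unbounded above satisfies $\int_{B_R}P^+\geq cR^{n+\delta}$ is false, even for polynomials of degree at most $n-2$. Take $n=6$ and $P(x)=-x_1^2x_2^2+x_2$, which has degree $4=n-2$ and is unbounded above along the $x_2$-axis; its positivity set is $\{x_2>0,\ x_1^2x_2<1\}$, on which $P\leq x_2$, and a direct computation gives $\int_{B_R}P^+=O(R^{n-1/2})=o(R^n)$. So the positivity set can be too thin for the $L^1$ norm of $P^+$ to detect one-sided unboundedness, and your averaged bound on $P^+$ cannot by itself yield $P\leq C$ (which your harmonic/degree-one Liouville steps in (a) and (b) then rely on). The paper closes this by going back to the exponential: if $\sup P=+\infty$, Gorin's theorem gives $\sup_{\partial B_r}P\geq r^{s_1}$ for some $s_1>0$; the gradient bound $|\nabla P|\leq C|x|^{n-3}$ produces a ball of radius $r^{3-n}$ on which $P\geq r^{s_1}$; and Jensen's inequality together with the averaged lower bound for $\mathcal{L}(f)$ on such shrinking balls (Lemma \ref{lem:B_x^-r_1}) forces $\int_{B_{2r}}e^{nw}\geq c\,e^{nr^{s_1}}r^{-C}$, contradicting polynomial volume growth. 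The exponential amplifies the thin region where $P$ is large in a way that $\int P^+$ does not, and some argument of this kind is needed to complete your proof.
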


Before establising the proof of Theorem \ref{thm:decomposition}, we need introduce some helpful lemmas. For brevity, set the notation $\alpha$ defined as 
$$\alpha=\frac{2}{(n-1)!|\mathbb{S}^n|}\int_{\mr^n}f(x)\ud x.$$ It is natural to expect that the logarithmic potential $\mathcal{L}(f)(x)$ has an asymptotic behavior near infinity resembling $-\alpha\log|x|$. In previous works such as \cite{CLin} and \cite{CL93}, such behavior has been described under additional decay assumptions to obtain pointwise estimates. In the current paper, we   will obtain integral results without requirng any additional assumptions.
\begin{lemma} \label{lem: L(f)}
Given  $f(x)$ satisfying  $f\in L^\infty_{loc}(\mr^n)$ and  $f\in L^1(\mr^n)$ with even integer $n\geq 2$, for $|x|\gg1$, there holds
	\begin{equation}\label{Lf=-aplha log x+}
\mathcal{L}(f)(x)=(-\alpha+o(1))\log|x|+\frac{2}{(n-1)!|\mathbb{S}^n|}\int_{B_1(x)}\log\frac{1}{|x-y|}f(y)\ud y
\end{equation}
where  $o(1)\to 0$ as $|x|\to\infty$.
\end{lemma}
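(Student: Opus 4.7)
The plan is to rewrite the claim as an estimate for a single quantity. After multiplying both sides of the asserted identity by $\tfrac{(n-1)!|\mathbb{S}^n|}{2}$ and using the definition of $\mathcal{L}(f)$, the lemma reduces to showing
\[ I:=\int_{\mathbb{R}^n\setminus B_1(x)}\log\tfrac{|y|}{|x-y|}f(y)\,\ud y\;+\;\int_{B_1(x)}\log|y|\,f(y)\,\ud y \;=\; -\log|x|\int_{\mathbb{R}^n}f\,\ud y+o(\log|x|). \]
To estimate $I$ I would decompose $\mathbb{R}^n$ into four regions dictated by the geometry of the kernel $\log(|y|/|x-y|)$: the singular ball $B_1(x)$, the near-origin piece $A_1=\{|y|\le |x|/2\}$, the collar $A_2=\{|x|/2<|y|<2|x|\}\setminus B_1(x)$, and the far piece $A_3=\{|y|\ge 2|x|\}$.

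Three of these contributions are easily shown to be $o(\log|x|)$. On $A_3$, $|x-y|\in[|y|/2,3|y|/2]$, so the kernel is $O(1)$ and the integral is bounded by $\int_{|y|\ge 2|x|}|f|=o(1)$. On $A_2$, both $|\log|y||$ and $|\log|x-y||$ are $O(\log|x|)$, while $\int_{A_2}|f|\le\int_{|y|>|x|/2}|f|\to 0$, yielding $o(\log|x|)$. On $B_1(x)$, $\log|y|=\log|x|+O(1/|x|)$ uniformly, and again $\int_{B_1(x)}|f|\to 0$, so the $\log|y|$ piece there is $o(\log|x|)$.

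The main contribution comes from $A_1$, where $|x-y|\in[|x|/2,3|x|/2]$. Writing $\log(|y|/|x-y|)=\log|y|-\log|x-y|$, the second summand produces the leading order:
\[ \int_{A_1}\log|x-y|\,f\,\ud y=\log|x|\int_{A_1}f\,\ud y+O\!\left(\int_{A_1}|f|\right)=\log|x|\int_{\mathbb{R}^n}f+o(\log|x|), \]
since $\int_{A_1}f\to\int f$ as $|x|\to\infty$. The delicate step is to show $\int_{A_1}\log|y|\,f\,\ud y=o(\log|x|)$, because $\int\log|y|\,|f|\,\ud y$ need not be finite. I would split $A_1$ at $|y|=1$: on $|y|\le 1$ the integral is $O(1)$ since $\log|y|\in L^1_{loc}$ and $f\in L^\infty_{loc}$, while on $\{1\le |y|\le |x|/2\}$ I would apply dominated convergence to
\[ \frac{1}{\log|x|}\int_{\{1\le|y|\le|x|/2\}}\log|y|\,f(y)\,\ud y=\int_{\mathbb{R}^n}\frac{\log|y|}{\log|x|}\,\mathbf{1}_{\{1\le|y|\le|x|/2\}}f(y)\,\ud y, \]
whose integrand is bounded by $|f|\in L^1$ and tends to $0$ pointwise. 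Summing the four contributions then gives $I=-\log|x|\int f+o(\log|x|)$, which is the lemma.

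The main obstacle is precisely this last dominated-convergence step. Under the bare hypothesis $f\in L^1\cap L^\infty_{loc}$ one cannot split $I$ as $\int\log|y|\,f-\int\log|x-y|\,f$, since neither half is guaranteed to converge; the region-by-region approach is therefore essential, and the pointwise decay $\log|y|/\log|x|\to 0$ on $A_1$ is the key observation that upgrades an a priori $O(\log|x|)$ bound into the required $o(\log|x|)$.
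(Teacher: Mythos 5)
Your proof is correct, and it follows the same overall strategy as the paper --- decompose $\mathbb{R}^n$ according to the geometry of the kernel $\log\frac{|y|}{|x-y|}$, extract the main term $-\log|x|\int f$ from the region near the origin, and kill the remaining regions using the smallness of $\int|f|$ there --- but the decompositions, and hence the mechanism in the key region, differ. The paper takes the near-origin region to be $B_{\log|x|}(0)$: on that set (for $|y|\geq 2$) the combined kernel $\log\frac{|x|\cdot|y|}{|x-y|}$ is \emph{uniformly} bounded by $\log(2\log|x|)=o(\log|x|)$, so the error after extracting $-\log|x|\int_{B_{\log|x|}(0)}f$ is controlled pointwise and no limiting argument is needed; the complement then contributes $O(\log|x|)\cdot o(1)$ because it carries vanishing $L^1$-mass of $f$. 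You instead take the near-origin region to be $\{|y|\leq|x|/2\}$, where $\log|y|$ can itself be of size $\log|x|$, so a uniform bound only yields $O(\log|x|)$ and you must invoke dominated convergence (with the pointwise decay $\log|y|/\log|x|\to 0$ and majorant $|f|$) to upgrade this to $o(\log|x|)$; your observation that $\int\log|y|\,|f|\,\ud y$ need not converge, so the splitting must be done region by region, is accurate and important. Both routes are valid; the paper's shrinking-relative-to-$|x|$ radius trades your dominated-convergence step for a slightly cleverer choice of cutoff that makes the kernel estimate uniform.
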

\begin{proof}
	Based on the assumptions that $f\in L^\infty_{loc}(\mathbb{R}^n)$ and $f\in L^1(\mathbb{R}^n)$, it is not difficult to verify that $\mathcal{L}(f)$ is well-defined.
	Choose $|x|\geq e^4$ such  that $|x|\geq 2\log|x|$. Following the argument in \cite{Li23}, we split $\mr^n$ into three pieces
	$$A_1=B_{1}(x), \quad A_2=B_{\log|x|}(0),\quad A_3=\mr^n\backslash (A_1\cup A_2).$$
	For $y\in A_2$ and $|y|\geq2$ , we have $|\log\frac{|x|\cdot|y|}{|x-y|}|\leq \log(2\log|x|)$.
	Respectively,  for $|y|\leq 2$, $ |\log\frac{|x|\cdot|y|}{|x-y|}|\leq |\log|y||+C$.
	Thus 
	\begin{equation}\label{A_2}
		|\int_{A_2}\log\frac{|y|}{|x-y|}f\ud y+\log|x|\int_{A_2}f\ud y|\leq C\log\log|x|+C=o(1)\log|x|.
	\end{equation}
	as $|x|\to \infty$.
	For $y\in A_3$, it is not hard to check 
	$$\frac{1}{|x|+1}\leq\frac{|y|}{|x-y|}\leq |x|+1.$$ With help  of this estimate, we could  control the integral over $A_3$ as
	\begin{equation}\label{A_3}
		|\int_{A_3}\log\frac{|y|}{|x-y|}f\ud y|\leq \log(|x|+1)\int_{A_3}|f|\ud y.
	\end{equation}
	For $y\in B_1(x)$, one has
	$1\leq |y|\leq |x|+1$ and then
	$$|\int_{A_1}\log|y|f\ud y|\leq \log(|x|+1)\int_{A_1}|f|\ud y.$$
		Since $f\in L^1(\mr^n)$, notice that $\int_{A_3\cup A_1}|f|\ud y\to 0$ as $|x|\to \infty$ and 
	$$\frac{2}{(n-1)!|\mathbb{S}^n|}\int_{A_2}f(y)\ud y=\alpha+o(1).$$
  Thus there holds
	\begin{equation}\label{u =-aplha log x1}
		\mathcal{L}(f)(x)=(-\alpha+o(1))\log|x|+\frac{2}{(n-1)!|\mathbb{S}^n|}\int_{B_1(x)}\log\frac{1}{|x-y|}f(y)\ud y.
	\end{equation}
\end{proof}

We will now utilize the lemma above to establish integral results pertaining to $\mathcal{L}(f)$ over different balls. These estimates will play a crucial role in our proofs for the main theorems. 
 We would like to clarify beforehand: the following lemmas  are under the same assumptions as satated in Lemma \ref{lem: L(f)}.
\begin{lemma}\label{lem: B_1 L(f)}
	For any $r_0>0$ fixed, there holds
	\begin{equation}\label{B_1(x)Lf}
		\fint_{B_{r_0}(x)}\mathcal{L}(f)(y)\ud y=(-\alpha+o(1))\log|x|.
	\end{equation}
\end{lemma}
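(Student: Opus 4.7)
The plan is to apply the pointwise asymptotic formula from Lemma \ref{lem: L(f)} at each point of $B_{r_0}(x)$ and then carefully average. For $|x|$ sufficiently large (say $|x| \geq 2r_0$) and $z \in B_{r_0}(x)$, we have $|z| = |x|(1 + O(r_0/|x|))$, so $\log |z| = \log |x| + o(1)$ uniformly in $z$. Thus Lemma \ref{lem: L(f)} applied at $z$ reads
\begin{equation*}
\mathcal{L}(f)(z) = (-\alpha + o(1))\log|x| + \frac{2}{(n-1)!|\mathbb{S}^n|}\int_{B_1(z)}\log\frac{1}{|z-y|}f(y)\,\ud y,
\end{equation*}
where the $o(1)$ tends to zero as $|x| \to \infty$, uniformly in $z \in B_{r_0}(x)$. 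Integrating in $z$ over $B_{r_0}(x)$ and dividing by $|B_{r_0}(x)|$, the first term contributes exactly $(-\alpha + o(1))\log|x|$ after averaging, so it suffices to prove that the averaged local singular term
\begin{equation*}
I(x) := \fint_{B_{r_0}(x)} \int_{B_1(z)} \log\frac{1}{|z-y|}\, f(y)\, \ud y\, \ud z
\end{equation*}
is $o(1)$ as $|x| \to \infty$ (and in particular $o(\log|x|)$).

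To estimate $I(x)$, I would apply Fubini's theorem. The set where both $z \in B_{r_0}(x)$ and $|z-y|\leq 1$ is nonempty only when $y \in B_{r_0+1}(x)$, so
\begin{equation*}
I(x) = \frac{1}{|B_{r_0}(x)|}\int_{B_{r_0+1}(x)} f(y) \left( \int_{B_{r_0}(x) \cap B_1(y)} \log\frac{1}{|z-y|}\, \ud z \right) \ud y.
\end{equation*}
The inner integral is bounded by $\int_{B_1(y)} |\log|z-y||\,\ud z = C_n$, a fixed constant depending only on the dimension, since $\log$ has an integrable singularity in $\mathbb{R}^n$. Hence, using that $|B_{r_0}(x)|$ is independent of $x$,
\begin{equation*}
|I(x)| \leq \frac{C_n}{|B_{r_0}(x)|} \int_{B_{r_0+1}(x)} |f(y)|\, \ud y.
\end{equation*}
Since $f \in L^1(\mathbb{R}^n)$ and $B_{r_0+1}(x)$ escapes to infinity, absolute continuity of the integral yields $\int_{B_{r_0+1}(x)}|f|\,\ud y \to 0$ as $|x| \to \infty$. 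Consequently $I(x) = o(1)$, and combining this with the first term produces the claimed identity.

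The only mildly delicate point is the Fubini step with the logarithmic singularity, but this is harmless because $\log|\cdot|^{-1}$ is locally integrable in every dimension and the region of integration in $y$ is a bounded neighborhood (of fixed radius) of the escaping point $x$, on which $\int |f|$ vanishes in the limit. No regularity or decay of $f$ beyond $L^1 \cap L^\infty_{\mathrm{loc}}$ is needed.
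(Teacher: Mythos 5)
Your proposal is correct and follows essentially the same route as the paper: apply Lemma \ref{lem: L(f)} at each point of $B_{r_0}(x)$ (using that $\log|z|=\log|x|+o(1)$ there) and control the averaged local singular term via Fubini, bounding it by a constant times $\int_{B_{r_0+1}(x)}|f|$, which suffices against $\log|x|$. The only cosmetic difference is that you invoke absolute continuity to get $o(1)$ where the paper is content with an $O(1)$ bound; both yield the stated asymptotics.
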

\begin{proof}
	Fubini's theorem yields that
	\begin{align*}
	&	|\int_{B_{r_0}(x)}\int_{B_1(z)}\log\frac{1}{|z-y|}f(y)\ud y\ud z|\\
\leq &	\int_{B_{r_0}(x)}\int_{B_{r_0+1}(x)}|\log\frac{1}{|z-y|}|\cdot|f(y)|\ud y\ud z\\
		\leq & \int_{B_{r_0+1}(x)}|f(y)|\ud y\int_{B_{2r_0+1}(0)}|\log|z||\ud z\\
		\leq & C.
	\end{align*}
	For $|x|\gg1$, there holds
	for any $y\in B_{r_0}(x)$,
	$$ |\log\frac{|y|}{|x|}|\leq C.$$
	With help of Lemma \ref{lem: L(f)}, then we have
	$$\fint_{B_{r_0}(x)}\mathcal{L}(f)(y)\ud y=(-\alpha+o(1))\log|x|.$$ 
\end{proof}

\begin{lemma}\label{lem:B_r_0|x|}
	For any  $0<r_1<1$ fixed, there holds
		\begin{equation}\label{B_|x|/2v}
		\fint_{B_{r_1|x|}(x)}\mathcal{L}(f)(y)\ud y=(-\alpha +o(1))\log|x|.
	\end{equation}
\end{lemma}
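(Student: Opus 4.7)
The plan is to mimic the argument used for Lemma \ref{lem: B_1 L(f)}, but accounting for the fact that the averaging radius now scales with $|x|$ rather than being fixed. The crucial geometric observation is that every point $z\in B_{r_1|x|}(x)$ satisfies $(1-r_1)|x|\leq |z|\leq (1+r_1)|x|$, so $|z|$ is comparable to $|x|$; in particular $\log|z|=\log|x|+O(1)$ uniformly on the ball, and $|z|\to\infty$ uniformly as $|x|\to\infty$.

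First I would apply Lemma \ref{lem: L(f)} pointwise at each $z\in B_{r_1|x|}(x)$ to write
\[
\mathcal{L}(f)(z)=\bigl(-\alpha+\epsilon(z)\bigr)\log|z|+\frac{2}{(n-1)!|\mathbb{S}^n|}\int_{B_1(z)}\log\frac{1}{|z-y|}f(y)\ud y,
\]
where $\epsilon(z)\to 0$ as $|z|\to\infty$. By the comparability $|z|\sim|x|$, $\sup_{z\in B_{r_1|x|}(x)}|\epsilon(z)|\to 0$ as $|x|\to\infty$, so averaging the first term gives $(-\alpha+o(1))\log|x|$; the $O(1)$ defect coming from $\log|z|=\log|x|+O(1)$ contributes only an $O(1)$ error that is harmless next to $\log|x|$.

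For the remaining logarithmic-potential piece I would swap the order of integration via Fubini:
\[
\int_{B_{r_1|x|}(x)}\int_{B_1(z)}\log\frac{1}{|z-y|}f(y)\ud y\,\ud z=\int_{\mathbb{R}^n}f(y)\int_{B_1(y)\cap B_{r_1|x|}(x)}\log\frac{1}{|z-y|}\ud z\,\ud y.
\]
The inner integral is dominated by $\int_{B_1(0)}|\log|\zeta||\ud\zeta<\infty$, independent of both $y$ and $x$, so the whole double integral is bounded by $C\|f\|_{L^1(\mathbb{R}^n)}$. Dividing by $|B_{r_1|x|}(x)|\sim|x|^n$ yields an $O(|x|^{-n})=o(1)$ contribution, which is absorbed into the $o(1)\log|x|$ error.

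The only real subtlety is making sure the $o(1)$ error in Lemma \ref{lem: L(f)} is genuinely uniform over the expanding ball $B_{r_1|x|}(x)$; this is precisely what the uniform lower bound $|z|\geq(1-r_1)|x|$ delivers. With that observation in hand, the rest of the proof is routine and follows exactly the structure of the fixed-radius argument for Lemma \ref{lem: B_1 L(f)}.
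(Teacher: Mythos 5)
Your proof is correct and follows essentially the same route as the paper: apply Lemma \ref{lem: L(f)} pointwise, use the uniform comparability $(1-r_1)|x|\leq|z|\leq(1+r_1)|x|$ to replace $\log|z|$ by $\log|x|$ with a uniform $o(1)$, and control the average of the near-singularity term by Fubini. The only (harmless) difference is that your Fubini bound integrates the logarithmic kernel over the unit ball first, giving an $O(|x|^{-n})$ average, whereas the paper bounds $|\log\frac{1}{|z-y|}|$ by $|z-y|^{-1}$ and gets $O(|x|^{-1})$; both are $o(1)$ and suffice.
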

\begin{proof}
	 By direct computation and Fubini's theorem, one has 
	\begin{align*}
		&\int_{B_{r_1|x|}(x)}|\int_{B_1(z)}\log\frac{1}{|z-y|}f(y)\ud y|\ud z\\
		\leq & 	\int_{B_{r_1|x|}(x)}\int_{B_1(z)}\frac{1}{|z-y|}|f(y)|\ud y\ud z\\
		\leq &\int_{B_{r_1|x|}(x)}\int_{B_{r_1|x|+1}(x)}\frac{1}{|z-y|}|f(y)|\ud y\ud z\\
		\leq &\int_{B_{r_1|x|+1}(x)}|f(y)|\int_{B_{r_1|x|}(x)}\frac{1}{|z-y|}\ud z\ud y\\
		\leq & \int_{B_{r_1|x|+1}(x)}|f(y)|\int_{B_{2r_1|x|+1}(0)}\frac{1}{|z|}\ud z\ud y\\
		\leq &C|x|^{n-1}.
	\end{align*}
	Thus 
	\begin{equation}\label{B_r_0|x|Q}
		\fint_{B_{r_1|x|}(x)}\int_{B_1(z)}\log\frac{1}{|z-y|}f(y)\ud y\ud z=O(|x|^{-1}).
	\end{equation}
	Meanwhile, for  $y\in B_{r_1|x|}(x)$, there holds
	\begin{equation}\label{log|x|/|x_0|}
	|\log\frac{|y|}{|x|}|\leq \log(1-r_1)+\log(1+r_1)\leq C.
	\end{equation}
	With help of these estimates \eqref{B_r_0|x|Q}, \eqref{log|x|/|x_0|} and 
	Lemma \ref{lem: L(f)},  we have
	\begin{equation}\label{int B_|x|/2 v}
		\fint_{B_{r_1|x|}(x)}\mathcal{L}(f)(y)\ud y=(-\alpha+o(1))\log|x|.
	\end{equation}
\end{proof}

\begin{lemma}\label{lem:B_x^-r_1}
	For any  $r_2>0$ fixed, there holds
	\begin{equation}\label{B_|x|/2v}
		\fint_{B_{|x|^{-r_2}}(x)}\mathcal{L}(f)(y)\ud y=(-\alpha +o(1))\log|x|.
	\end{equation}
\end{lemma}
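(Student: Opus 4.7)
The plan is to mirror the structure used in the proofs of Lemma \ref{lem: B_1 L(f)} and Lemma \ref{lem:B_r_0|x|}: apply the pointwise decomposition of Lemma \ref{lem: L(f)} at each $z \in B_{|x|^{-r_2}}(x)$, integrate, and show that the singular part $\int_{B_1(z)} \log\frac{1}{|z-y|} f(y)\,\ud y$ averages to $o(\log|x|)$. The principal term is then $(-\alpha + o(1))\log|x|$ almost for free, because on the tiny ball $B_{|x|^{-r_2}}(x)$ one has $\log|z| = \log|x| + O(|x|^{-r_2-1})$, so averaging $(-\alpha+o(1))\log|z|$ produces $(-\alpha+o(1))\log|x|$.

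The real content lies in the error estimate. Writing $r = |x|^{-r_2}$, Fubini gives
\[
\int_{B_r(x)}\int_{B_1(z)} \Bigl|\log\tfrac{1}{|z-y|}\Bigr| |f(y)|\,\ud y\,\ud z \;\le\; \int_{B_2(x)} |f(y)|\,\Bigl(\int_{B_r(x)} \bigl|\log|z-y|\bigr|\,\ud z\Bigr)\ud y
\]
for $|x|$ large enough that $1+r \le 2$. The key pointwise bound I would establish is
\[
\int_{B_r(x)} \bigl|\log|z-y|\bigr|\,\ud z \;\le\; C\,r^n(\log|x| + 1) \qquad \text{for every } y \in B_2(x),
\]
by splitting into the cases $|x-y| \ge 2r$ (where $|z-y|$ stays comparable to $|x-y|$, so the log is bounded by $r_2\log|x|+C$) and $|x-y| < 2r$ (where $B_r(x) \subset B_{3r}(y)$ and a direct change of variable gives $\int_{B_{3r}(0)}|\log|w||\,\ud w \le C r^n(r_2\log|x|+1)$). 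Dividing the resulting bound by $|B_r(x)| = c_n r^n$ and using that $\int_{B_2(x)}|f|\,\ud y \to 0$ as $|x| \to \infty$ (because $f \in L^1(\mr^n)$) yields
\[
\fint_{B_r(x)} \int_{B_1(z)}\log\tfrac{1}{|z-y|} f(y)\,\ud y\,\ud z \;=\; o(\log|x|).
\]

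Combining this with Lemma \ref{lem: L(f)} and the trivial estimate $\fint_{B_r(x)} \log|z|\,\ud z = \log|x| + o(1)$ produces the claimed identity. The main obstacle is purely technical: controlling the logarithmic singularity on a shrinking ball while paying the price of the very small volume factor $r^n$ in the denominator. The near/far split above is exactly what absorbs that factor, at the unavoidable cost of an extra $\log|x|$, which is then killed by the decay of $\int_{B_2(x)}|f|\,\ud y$.
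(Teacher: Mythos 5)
Your argument is correct and follows essentially the same route as the paper's: both start from the pointwise decomposition in Lemma \ref{lem: L(f)}, apply Fubini to the near-singular term over the shrinking ball, and absorb the resulting extra factor of $\log|x|$ (coming from the logarithmic singularity against the volume $|x|^{-nr_2}$) using $\int_{B_2(x)}|f|\,\ud y\to 0$. The only cosmetic difference is that the paper first trims $B_1(z)$ down to $B_{|z|^{-r_2}}(z)$, where the logarithm is single-signed, before applying Fubini, whereas you keep $B_1(z)$ and control $|\log|z-y||$ via a near/far split in $y$; both give the same $o(1)\log|x|$ error.
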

\begin{proof}
	 Firstly, we need mofidy the estimate of Lemma \ref{lem: L(f)}. 
	By direct computation,  for $|x|\gg1$, we have
	$$|\int_{B_1(x)\backslash B_{|x|^{-r_2}}(x)}\log\frac{1}{|x-y|}f(y)\ud y|\leq r_2\log|x|\int_{B_1(x)}|f|\ud y=o(1)\log|x|.$$
	Using Lemma \ref{lem: L(f)}, there holds
	$$\mathcal{L}(f)(x)=(-\alpha +o(1))\log|x|+\frac{2}{(n-1)!|\mathbb{S}^n|}\int_{B_{|x|^{-r_2}}(x)}\log\frac{1}{|x-y|}f(y)\ud y.$$
	When $|x|\geq 2$ and $r_2>0$, if $z\in B_{|x|^{-r_2}}(x)$ and $y\in B_{|z|^{-r_2}}(z)$, one has
	$$|y-x|\leq|y-z|+|z-x|\leq |z|^{-r_2}+|x|^{-r_2}\leq (|x|-|x|^{-r_2})^{-r_2}+|x|^{-r_2}\leq 3|x|^{-r_2}.$$
	By using Fubini's theorem as before, for $|x|\gg1$, there holds
	\begin{align*}
		&|\int_{B_{|x|^{-r_2}}(x)}\int_{B_{|z|^{-r_2}}(z)}\log\frac{1}{|z-y|}f(y)\ud y\ud z|\\
		\leq &\int_{B_{|x|^{-r_2}}(x)}\int_{B_{3|x|^{-r_2}}(x)}\log\frac{1}{|z-y|}|f(y)|\ud y\ud z\\
		\leq &\int_{B_1(x)}|f(y)|\ud y\int_{B_{4|x|^{-r_2}}(0)}\log\frac{1}{|z|}\ud z\\
		=&(\int_{B_1(x)}|f(y)|\ud y)\cdot|\mathbb{S}^n|(4|x|^{-r_2})^n\frac{1-n\log(4|x|^{-r_2})}{n^2}
	\end{align*}
	which yields that
	\begin{align*}
		&|\frac{1}{|B_{|x|^{-r_2}}(x)|}\int_{B_{|x|^{-r_2}}(x)}\int_{B_{|z|^{-r_2}}(z)}\log\frac{1}{|z-y|}f(y)\ud y\ud z|\\
		\leq &C\left(\int_{B_1(x)}|f(y)|\ud y\right)(nr_2\log|x|+C)\\
		=&o(1)\log|x|
	\end{align*}
	since $\int_{B_1(x)}|f(y)|\ud y=o(1)$.
For any $y\in B_{|x|^{-r_2}}(x)$, it is easy to check
$$|\log\frac{|y|}{|x|}|\leq C.$$
 Thus we have
	\begin{equation}\label{int B_|x|^3-n}
		\fint_{B_{|x|^{-r_2}}(x)}\mathcal{L}(f)(z)\ud z=(-\alpha+o(1))\log |x|.
	\end{equation}
\end{proof}

\begin{lemma}\label{lem: e^nL(f)}
	For $r_3>0$ fixed, there holds
	$$\log\left(\fint_{B_{r_3}(x)}e^{n\mathcal{L}(f)(y)}\ud y\right)=(-n\alpha+o(1))\log|x|.$$
\end{lemma}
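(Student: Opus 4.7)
The plan is to establish matching upper and lower bounds, both of the form $(-n\alpha + o(1))\log|x|$, for the quantity $\log\fint_{B_{r_3}(x)} e^{n\mathcal{L}(f)(y)}\ud y$.

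For the lower bound, I would apply Jensen's inequality to the convex function $\exp$:
$$\fint_{B_{r_3}(x)} e^{n\mathcal{L}(f)(y)}\ud y \geq \exp\left(n \fint_{B_{r_3}(x)} \mathcal{L}(f)(y)\ud y\right),$$
and then invoke Lemma \ref{lem: B_1 L(f)} (with $r_0 = r_3$) to evaluate the average as $(-\alpha + o(1))\log|x|$. Taking logarithms immediately yields the lower bound.

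For the upper bound, set $C_n := 2/((n-1)!|\mathbb{S}^n|)$. Using Lemma \ref{lem: L(f)} together with the uniform expansion $\log|y| = \log|x| + O(1/|x|)$ for $y \in B_{r_3}(x)$, I would decompose
$$\mathcal{L}(f)(y) = (-\alpha + o(1))\log|x| + I(y), \qquad I(y) := C_n\int_{B_1(y)} \log\frac{1}{|y-z|} f(z)\ud z,$$
with $o(1)$ uniform over $y \in B_{r_3}(x)$. The task then reduces to showing $\int_{B_{r_3}(x)} e^{nI(y)}\ud y = O(1)$ as $|x| \to \infty$, since this forces $\log\fint e^{nI(y)}\ud y = O(1) = o(\log|x|)$.

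To prove that uniform bound, I would dominate $nI(y) \leq nC_n (g \ast h)(y)$, where $g(z) := |f(z)|\mathbf{1}_{B_{r_3+1}(x)}(z)$ and $h(u) := (-\log|u|)\mathbf{1}_{|u|\leq 1}$, using that $B_1(y)\subset B_{r_3+1}(x)$ for $y \in B_{r_3}(x)$. Writing $a(x) := \|g\|_{L^1} = \|f\|_{L^1(B_{r_3+1}(x))}$, the hypothesis $f \in L^1(\mr^n)$ gives $a(x) \to 0$. Applying Jensen's inequality with the probability measure $g/a(x)$ and then Fubini yields
$$\int_{B_{r_3}(x)} e^{nC_n (g\ast h)(y)}\ud y \leq \int \frac{g(z)}{a(x)}\left( \int_{B_{r_3}(x)} e^{nC_n a(x) h(y-z)}\ud y\right)\ud z;$$
for $|x|$ large enough that $nC_n a(x) < n$, the inner integral is uniformly bounded by $\int_{B_1(0)} |u|^{-nC_n a(x)}\ud u + |B_{r_3}| = O(1)$.

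The main technical obstacle will be handling the exponential of the logarithmic singularity in $I(y)$; any pointwise bound on $I$ through $\|f\|_{L^\infty_{\mathrm{loc}}}$ is far too crude. The remedy is the decay $\|f\|_{L^1(B_{r_3+1}(x))}\to 0$ coming from $f \in L^1(\mr^n)$, which renders the Jensen--Fubini estimate strictly subcritical ($nC_n a(x) < n$) for all sufficiently large $|x|$---this is the Brezis--Merle-flavored exponential integrability that drives the argument. Once this subcritical bound is secured, the remaining steps are essentially bookkeeping.
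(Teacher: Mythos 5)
Your proof is correct and follows essentially the same route as the paper: Jensen's inequality plus Lemma \ref{lem: B_1 L(f)} for the lower bound, and for the upper bound a split of $\mathcal{L}(f)(y)$ into the leading $(-\alpha+o(1))\log|x|$ term plus a near-singularity integral, which is then handled by Jensen's inequality applied to the probability measure $f$-normalized-by-its-mass, Fubini, and the subcriticality of the exponent forced by $\|f\|_{L^1(B_{r_3+1}(x))}\to 0$. The only organizational difference is that the paper works with $f^+$ and the smaller kernel cutoff $B_{1/4}(y)$, restricts to $y\in B_{1/4}(x)$ so the singular integral lives on $B_{1/2}(x)$, and then covers $B_{r_3}(x)$ by finitely many such quarter-balls; your version works directly on $B_{r_3}(x)$ with $|f|$ and the full $B_1(y)$ cutoff, which streamlines the bookkeeping and avoids the covering step while carrying the same analytic content.
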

\begin{proof}
	On one hand, with help of Jensen's inequality and Lemma \ref{lem: B_1 L(f)},  one has
	\begin{equation}\label{e^Lf lower bound}
		\fint_{B_{r_3}(x)}e^{n\mathcal{L}(f)(y)}\ud y\geq \exp\left(\fint_{B_{r_3}(x)}n\mathcal{L}(f)(y)\ud y\right)=e^{(-n\alpha+o(1))\log|x|}.
	\end{equation}
Now, we are going to deal with  the upper bound.
For $|y|\gg1$, there holds
$$|\int_{B_1(y)\backslash B_{1/4}(y)}\log\frac{1}{|y-z|}f(z)\ud z|\leq C\int_{B_1(y)\backslash B_{1/4}(y)}|f(z)|\ud z.$$
Combing with Lemma \ref{lem: L(f)}, we have
$$\mathcal{L}(f)(y)=(-\alpha +o(1))\log|y|+\frac{2}{(n-1)!|\mathbb{S}^n|}\int_{B_{1/4}(y)}\log\frac{1}{|y-z|}f(z)\ud z.$$
Then for $|x|\gg 1$ and $y\in B_{1/4}(x)$, 
 one has
\begin{equation}\label{L (f)leq -alpha log|x|}
	\mathcal{L}(f)(y)\leq (-\alpha +o(1))\log|x|+\frac{2}{(n-1)!|\mathbb{S}^n|}\int_{B_{1/2}(x)}\log\frac{1}{|y-z|}f^+(z)\ud z
\end{equation}
where we have used  the estimate $|y-z|\leq 1$ in this case.

We claim that for $|x|\gg 1$,
\begin{equation}\label{e^nLf leq |x|^n-alpha}
	\int_{B_{1/4}(x)}e^{n\mathcal{L}(f)(y)}\ud y\leq e^{(-n\alpha +o(1))\log|x|}.
\end{equation}
If $f^+(z)=0$ a.e. on $B_{1/2}(x)$, we immediately obatain \eqref{e^nLf leq |x|^n-alpha} due to \eqref{L (f)leq -alpha log|x|}.
Otherwise, Jensen's inequality yields that
\begin{align*}
	&\int_{B_{1/4}(x)}e^{n\mathcal{L}(f)(y)}\ud y\\
	\leq & |x|^{-n\alpha+o(1)}\int_{B_{1/4}(x)}\exp\left(\frac{2n}{(n-1)!|\mathbb{S}^n|}\int_{B_{1/2}(x)}\log\frac{1}{|y-z|}f^+(z)\ud z\right)\ud y\\
	\leq & |x|^{-n\alpha+o(1)}\int_{B_{1/4}(x)}\int_{B_{1/2}(x)}|y-z|^{-\frac{2n\|f^+\|_{L^1(B_{1/2}(x))}}{(n-1)!|\mathbb{S}^n|}}\frac{f^+(z)}{\|f^+\|_{L^1(B_{1/2}(x))}}\ud z\ud y.
\end{align*}
Since $f\in L^1(\mr^n)$, there exists $R_2>0$ such that $|x|\geq R_2$, we have $$\|f^+\|_{L^1(B_{1/2}(x))}\leq \frac{(n-1)!|\mathbb{S}^n|}{4n}.$$
Applying Fuibini's theorem,  we prove
the claim.
For $r_3>0 $ fixed ,  we can choose finite balls $1\leq j\leq C(r_3)$ such that $B_{r_3}(x)\subset \cup_j B_{1/4}(x_j)$ with $x_j\in B_{r_3}(x)$.
Hence, using the estimate \eqref{e^nLf leq |x|^n-alpha}, for $|x|\gg 1$, we have
$$\log\fint_{B_{r_3}(x)}e^{n\mathcal{L}(f)(y)}\ud y\leq (-n\alpha+o(1))\log|x|.$$
Combing with  \eqref{e^Lf lower bound}, we finish our proof.
\end{proof}
\begin{lemma}\label{lem: e^nLf on B_R+1 B_R-1}
	For $R\gg1$, there holds
	$$\fint_{B_{R+1}(0)\backslash B_{R-1}(0)}e^{n\mathcal{L}(f)}\ud y=R^{-n\alpha+o(1)}$$
	where $o(1)\to 0$ as $R\to\infty.$
\end{lemma}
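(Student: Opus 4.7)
The plan is to deduce this annular average from the small-ball average estimate already proved in Lemma \ref{lem: e^nL(f)}, via a standard covering-and-packing argument on the shell $B_{R+1}(0)\setminus B_{R-1}(0)$. The starting observation is that Lemma \ref{lem: e^nL(f)} gives, for any fixed radius $r_3>0$,
\[
\int_{B_{r_3}(x)}e^{n\mathcal{L}(f)(y)}\,\ud y = |x|^{-n\alpha+o(1)} \quad \text{as } |x|\to\infty,
\]
and inspection of its proof shows the $o(1)$ term depends only on $\|f\|_{L^1(B_1(x))}$ and on factors like $\log|x|/|y|$ for $y\in B_{r_3}(x)$, all of which are uniform for $x$ in the shell $R-1\le|x|\le R+1$ as $R\to\infty$. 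This uniformity is what allows us to pass from a single small ball to the full annulus.

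First I would prove the upper bound. Fix $r_3=1/4$. By a Vitali-type covering there exist points $x_1,\dots,x_N$ in $B_{R+1}(0)\setminus B_{R-1}(0)$, with $N\le C_n R^{n-1}$, such that the balls $B_{1/4}(x_j)$ cover the shell. Summing the uniform estimate above yields
\[
\int_{B_{R+1}(0)\setminus B_{R-1}(0)}e^{n\mathcal{L}(f)}\,\ud y \le \sum_{j=1}^N \int_{B_{1/4}(x_j)}e^{n\mathcal{L}(f)}\,\ud y \le C\,R^{n-1}\cdot R^{-n\alpha+o(1)}.
\]
Since $|B_{R+1}(0)\setminus B_{R-1}(0)|\sim c_n R^{n-1}$, dividing gives the desired upper bound $R^{-n\alpha+o(1)}$.

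For the lower bound I would dually pack the shell with pairwise disjoint balls $B_{r_3}(y_k)$, $y_k$ in the shell, where a simple packing argument shows one can choose $M\ge c_n R^{n-1}$ of them. The same uniform lower half of Lemma \ref{lem: e^nL(f)} (which ultimately comes from Jensen's inequality together with Lemma \ref{lem: B_1 L(f)}) gives $\int_{B_{r_3}(y_k)}e^{n\mathcal{L}(f)}\,\ud y \ge c\,R^{-n\alpha+o(1)}$, and disjointness turns the sum into a legitimate lower bound of the integral over the shell, namely $\ge c\,R^{n-1}\cdot R^{-n\alpha+o(1)}$. Dividing by the volume of the shell finishes the proof.

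The only real obstacle is ensuring that the $o(1)$ appearing in Lemma \ref{lem: e^nL(f)} is uniform in $x$ as $x$ ranges over the entire shell (not just for fixed $x$ with $|x|\to\infty$). I would handle this by tracing through that proof: every $o(1)$ ingredient reduces either to $\int_{B_1(x)}|f|\,\ud y\to 0$ (uniform on the shell since $f\in L^1(\mr^n)$) or to $\sup_{y\in B_{r_3}(x)}|\log(|y|/|x|)|=O(1/R)$, both of which go to zero uniformly as $R\to\infty$. Once this uniformity is recorded, the covering-and-packing argument above yields the matching upper and lower bounds and completes the proof.
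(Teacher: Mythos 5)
Your proposal is correct and follows essentially the same route as the paper: the paper also chooses $C^{-1}R^{n-1}\leq i_R\leq CR^{n-1}$ points $x_j$ with $|x_j|=R$ so that the balls $B_{1/4}(x_j)$ are pairwise disjoint while the balls $B_4(x_j)$ cover the annulus, and then sums the small-ball estimate of Lemma \ref{lem: e^nL(f)} for the two bounds. Your explicit remark that the $o(1)$ in that lemma is uniform over the shell is a point the paper leaves implicit, and it is correctly justified.
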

\begin{proof}
	By a direct computation, we obtain the inequality $C^{-1}R^{n-1} \leq |B_{R+1}(0)\backslash B_{R-1}(0)| \leq C R^{n-1}$ for $R \gg 1$, where $C$ is independent of $R$. We can select an index $C^{-1}R^{n-1} \leq i_R \leq C R^{n-1}$ such that the balls $B_{1/4}(x_j)$ with $|x_j| = R$ and $1 \leq j \leq i_R$ are pairwise disjoint, and the sum of the balls $B_4(x_j)$ cover the annulus $B_{R+1}(0)\backslash B_{R-1}(0)$. Applying Lemma \ref{lem: e^nL(f)}, we obtain the following result
	\begin{align*}
		\fint_{B_{R+1}(0)\backslash B_{R-1}(0)}e^{n\mathcal{L}(f)}\ud y\leq & \frac{1}{C^{-1} R^{n-1}}\sum_{j=1}^{i_R}\int_{B_4(x_j)}e^{n\mathcal{L}(f)}\ud y\\
		\leq & CR^{1-n}\sum_{j=1}^{i_R}|x_j|^{-n\alpha+o(1)}\\
		\leq &CR^{1-n}\cdot CR^{n-1}\cdot R^{-n\alpha +o(1)}\\
		=&R^{-n\alpha +o(1)}.
	\end{align*}
Similarly,
$$ \fint_{B_{R+1}(0)\backslash B_{R-1}(0)}e^{n\mathcal{L}(f)}\ud y \geq \frac{1}{C R^{n-1}}\sum_{j=1}^{i_R}\int_{B_{1/4}(x_j)}e^{n\mathcal{L}(f)}\ud y= R^{-n\alpha+o(1)}.$$

Finally, we get the desired result.
\end{proof}

\begin{lemma}\label{lem: e^nLf B_R}
	There holds 
	$$\mathcal{V}(e^{n\mathcal{L}(f)})=(1-\alpha)^+.$$ Moreover, if $\alpha>1$, one has
	$$\int_{\mr^n}e^{n\mathcal{L}(f)}\ud x<+\infty.$$
\end{lemma}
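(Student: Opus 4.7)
The plan is to turn the average estimate from Lemma \ref{lem: e^nLf on B_R+1 B_R-1} into an integral estimate on each annulus of width $2$, and then decompose $B_R(0)$ into (essentially) disjoint unions of such annuli to bound $\int_{B_R(0)} e^{n\mathcal{L}(f)}\ud y$ from both sides. Concretely, for each integer $k\geq 2$, Lemma \ref{lem: e^nLf on B_R+1 B_R-1} gives
$$
\int_{B_{k+1}(0)\setminus B_{k-1}(0)} e^{n\mathcal{L}(f)}\ud y = |B_{k+1}(0)\setminus B_{k-1}(0)|\cdot k^{-n\alpha+o(1)} = k^{n-1-n\alpha+o(1)}
$$
as $k\to\infty$. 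The crucial feature is that the $o(1)$ depends on $k$ alone and tends to $0$; thus for any $\varepsilon>0$ we can find $K_\varepsilon$ so that the exponent lies in $[n-1-n\alpha-\varepsilon,\; n-1-n\alpha+\varepsilon]$ for $k\geq K_\varepsilon$.

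For the upper bound, I would cover $B_R(0)$ by $B_2(0)$ together with the $\lceil R/2\rceil$ annuli $B_{2j+1}(0)\setminus B_{2j-1}(0)$ for $j=1,\dots,\lceil R/2\rceil$, which gives
$$
\int_{B_R(0)} e^{n\mathcal{L}(f)}\ud y \;\leq\; C+\sum_{j=1}^{\lceil R/2\rceil} (2j)^{n-1-n\alpha+o(1)}.
$$
Splitting the sum at $K_\varepsilon$ and estimating the tail as a Riemann sum, three cases arise. If $\alpha<1$, the sum grows like $R^{n(1-\alpha)+\varepsilon'}$ for arbitrarily small $\varepsilon'$, so $\log\int_{B_R(0)}e^{n\mathcal{L}(f)}\ud y\leq n(1-\alpha)\log R+o(\log R)$; dividing by $\log|B_R(0)|=n\log R+O(1)$ yields $\mathcal{V}_{sup}(e^{n\mathcal{L}(f)})\leq 1-\alpha$. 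If $\alpha>1$, the tail is summable, which simultaneously proves the finiteness claim $\int_{\mr^n} e^{n\mathcal{L}(f)}\ud x<+\infty$ and forces $\mathcal{V}_{sup}(e^{n\mathcal{L}(f)})=0$. If $\alpha=1$, the sum is $O(\log R)$ (or bounded), so $\log\int_{B_R(0)}\leq \log\log R+O(1)$ and again the entropy is $0$.

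For the matching lower bound (only needed when $\alpha<1$), I would pick out the disjoint annuli $B_{2j+1}(0)\setminus B_{2j-1}(0)$ with $j$ ranging over an arithmetic progression in $[R/4,R/2]$, all of which sit inside $B_R(0)$. Each contributes $j^{n-1-n\alpha+o(1)}\gtrsim R^{n-1-n\alpha-\varepsilon}$, and there are $\gtrsim R$ of them, so $\int_{B_R(0)} e^{n\mathcal{L}(f)}\ud y\gtrsim R^{n(1-\alpha)-\varepsilon}$. This gives $\mathcal{V}_{inf}(e^{n\mathcal{L}(f)})\geq 1-\alpha-\varepsilon/n$, and sending $\varepsilon\to 0$ finishes $\mathcal{V}(e^{n\mathcal{L}(f)})=1-\alpha$.

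The main technical obstacle is the uniform handling of the $o(1)$ exponents across all dyadic scales; the remedy is the $\varepsilon$--$K_\varepsilon$ splitting above, so that the ``bad'' portion of the sum contributes $O(1)$ and the ``good'' portion is governed by an honest power of $R$. Everything else reduces to elementary comparisons between $\sum_{j\leq R} j^{s}$ and $R^{s+1}$, and to the observation that logarithmic corrections are invisible to the volume entropy because $\log\log R/\log R\to 0$.
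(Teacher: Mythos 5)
Your argument is correct and follows essentially the same route as the paper: both cover $B_R(0)$ by annuli, invoke the two-sided annulus estimate of Lemma \ref{lem: e^nLf on B_R+1 B_R-1}, and handle the $o(1)$ exponent by an $\varepsilon$--$K_\varepsilon$ splitting, with the $\alpha>1$ finiteness coming from summability of the tail. The only cosmetic difference is the lower bound, where the paper integrates over a single ball $B_{R/4}(x_R)$ with $|x_R|=R/2$ via Jensen's inequality and Lemma \ref{lem:B_r_0|x|} rather than summing over $\sim R$ disjoint annuli, but both give the same power $R^{n-n\alpha-\varepsilon}$.
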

\begin{proof}
	For $R\gg1$, choose $x_R\in \mr^n$ with  $|x_R|=\frac{R}{2}$. 
	With help of Lemma \ref{lem:B_r_0|x|} and Jensen's equality, we have
	\begin{align*}
		\int_{B_R(0)}e^{n\mathcal{L}(f)}\ud x\geq &\int_{B_{|x_R|/2}(x_R)}e^{n\mathcal{L}(f)}\ud x\\
		\geq &|B_{|x_R|/2}(x_R)|\exp\left(\fint_{B_{|x_R|/2}(x_R)}n\mathcal{L}(f)\ud x\right)\\
		\geq &CR^{n-n\alpha+o(1)}.
	\end{align*}
	By taking $R\to\infty$, we have
	$$\lim_{R\to\infty}\inf \frac{\log\int_{B_R(0)}e^{n\mathcal{L}(f)}\ud x}{\log|B_R(0)|}\geq 1-\alpha.$$
	Since for any $R\geq 1$, 
	$$0<\int_{B_1(0)}e^{n\mathcal{L}(f)}\ud x\leq \int_{B_R(0)}e^{n\mathcal{L}(f)}\ud x,$$
	we obtain that 
	\begin{equation}\label{lambda inf}
		\mathcal{V}_{inf}(e^{n\mathcal{L}(f)})\geq (1-\alpha)^+.
	\end{equation}
	
	Now, we claim that if $\alpha>1$, there holds
	$$\int_{\mr^n}e^{n\mathcal{L}(f)}\ud x<+\infty.$$ 
	 	If $\alpha>1$, due to Lemma \ref{lem: e^nLf on B_R+1 B_R-1}, there exist $R_1>1$ such that for any $R\geq R_1$, one has 
	$$\fint_{B_{R+1}(0)\backslash B_{R-1}(0)}e^{n\mathcal{L}(f)}\ud x\leq R^{-n\alpha +\frac{n(\alpha-1)}{2}}.$$
	Then
	\begin{align*}
		\int_{B_R(0)}e^{n\mathcal{L}(f)}\ud x\leq &	\int_{B_{R_1}(0)}e^{n\mathcal{L}(f)}\ud x+\sum_{i=[R_1]+1}^{[R]}\int_{B_{i+1}(0)\backslash B_{i-1}(0)}e^{n\mathcal{L}(f)}\ud x.\\
		\leq &C+\sum_{i=[R_1]+1}^{[R]}Ci^{n-1}\cdot i^{-n\alpha+\frac{n(\alpha-1)}{2}}\\
		=&C+C\sum_{i=[R_1]+1}^{[R]}i^{-1-\frac{n(\alpha-1)}{2}}.
	\end{align*}
Due to $\alpha>1$, the right side is finite as $R\to\infty.$  Thus  we prove our claim.
Immediately, for $\alpha>1$, one has 
\begin{equation}\label{V_sup =0 for alpha >1}
	\mathcal{V}_{sup}(e^{n\mathcal{L}(f)})=0.
\end{equation}

On the other hand, if $\alpha\leq 1$, for any small  $\epsilon\in (0,1)$, with help of Lemma \ref{lem: e^nLf on B_R+1 B_R-1} again, there exists $R_\epsilon>1$ such that for any $R\geq R_\epsilon$, one has
	\begin{equation}\label{equ e^nLf B_R+1 minu B_R-1}
		 \int_{B_{R+1}(0)\backslash B_{R-1}(0)}e^{n\mathcal{L}(f)}\ud x\leq C R^{n-1-n\alpha +\epsilon}.
	\end{equation}
	Similarly as before, there holds
	\begin{align*}
		\int_{B_R(0)}e^{n\mathcal{L}(f)}\ud x\leq &	\int_{B_{R_\epsilon}(0)}e^{n\mathcal{L}(f)}\ud x+\sum_{i=[R_\epsilon]+1}^{[R]}\int_{B_{i+1}(0)\backslash B_{i-1}(0)}e^{n\mathcal{L}(f)}\ud x.\\
		\leq &C(\epsilon)+C\sum_{i=[R_\epsilon]+1}^{[R]}C i^{n-1-n\alpha+\epsilon}\\
	\leq 	&C(\epsilon)+C\int_{[R_\epsilon]}^{[R]+1}t^{n-1-n\alpha+\epsilon}\ud t\\
	\leq&C(\epsilon)+C\frac{1}{n-n\alpha+\epsilon}\left(([R]+1)^{n-n\alpha+\epsilon}-([R_\epsilon]+1)^{n-n\alpha+\epsilon}\right)\\
	\leq & C(\epsilon)+C(\epsilon)(R+1)^{n-n\alpha+\epsilon}.
	\end{align*}
By letting $R\to\infty$, there holds
\begin{equation}\label{lambda enLf upper bound}
	\mathcal{V}_{sup}(e^{n\mathcal{L}(f)})\leq 1-\alpha+\frac{\epsilon}{n}.
\end{equation}
Due to the arbitrary choice of $\epsilon$ and \eqref{V_sup =0 for alpha >1}, we have
\begin{equation}\label{lambda sup}
	\mathcal{V}_{sup}(e^{n\mathcal{L}(f)})\leq (1-\alpha)^+.
\end{equation}
Combining \eqref{lambda inf} with \eqref{lambda sup}, one has
$$\mathcal{V}(e^{n\mathcal{L}(f)})=(1-\alpha)^+.$$

\end{proof}

The above lemmas provide integral results concerning the asymptotic behavior of $\mathcal{L}(f)$. If we impose additional conditions on $f$, we can obtain some pointwise estimates that are of independent interest similar to the two-dimensional case in \cite{CL93} and \cite{CLin}.

\begin{lemma}\label{lem:Q^+ and Q^-} 
	If  $f^+$ has compact support, there holds 
	$$ \mathcal{L}(f)(x)\leq -\alpha \log|x|+C,\quad |x|\gg1.$$
Conversely, if $f^-$ has compact support, there holds
	$$\mathcal{L}(f)(x)\geq -\alpha\log|x|-C,\quad |x|\gg1.$$
\end{lemma}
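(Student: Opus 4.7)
The plan is to split $f=f^+-f^-$ and accordingly $\mathcal{L}(f)=\mathcal{L}(f^+)-\mathcal{L}(f^-)$, writing $\alpha^{\pm}:=\frac{2}{(n-1)!|\mathbb{S}^n|}\int f^{\pm}$ so that $\alpha=\alpha^+-\alpha^-$. The two halves of the statement will then follow by combining two independent building blocks: a sharp two-sided asymptotic for compactly supported nonnegative densities, and a one-sided lower bound valid for any nonnegative $L^1\cap L^\infty_{loc}$ density.

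First I would prove a preparatory \emph{Claim A}: if $g\geq 0$ has compact support $K$ and $g\in L^1\cap L^\infty_{loc}$, then $\mathcal{L}(g)(x)=-\beta\log|x|+O(1)$ as $|x|\to\infty$, where $\beta=\frac{2}{(n-1)!|\mathbb{S}^n|}\int g$. This reduces to a uniform Taylor expansion: for $y\in K$ and $|x|$ much larger than $\mathrm{diam}(K)$, one has $\log|x-y|=\log|x|+O(|y|/|x|)$ uniformly on $K$, so that $\mathcal{L}(g)(x)$ equals $-\beta\log|x|$ plus the convergent constant $\frac{2}{(n-1)!|\mathbb{S}^n|}\int_K\log|y|\,g(y)\,dy$ up to a vanishing remainder. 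No delicate analysis is needed here; one simply restricts the logarithmic potential to $K$.

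Next I would establish a partner \emph{Claim B}: for any $g\geq 0$ in $L^1\cap L^\infty_{loc}$ (no support restriction), $\mathcal{L}(g)(x)\geq -\beta\log|x|-C$ for $|x|\geq 2$. Equivalently, I want to bound $\int\log\frac{|y||x|}{|x-y|}g(y)\,dy$ from below. The plan is to split the integration into $\{|y|\leq 1\}$ and $\{|y|>1\}$. On $\{|y|\leq 1\}$ use $|x-y|\geq|x|/2$ to reduce the integrand to $\log|y|-\log 2$, which pairs with $g$ to a finite constant since $g$ is locally bounded and $\log|y|$ is locally integrable. On $\{|y|>1\}$ a short triangle-inequality argument, subdividing into $|y|\leq|x|$ and $|y|>|x|$, shows $\frac{|y||x|}{|x-y|}\geq\frac{1}{2}$ uniformly, bounding that contribution below by $-\log 2\cdot\|g\|_{L^1}$.

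With Claims A and B in hand the lemma follows by subtraction. In case one, $f^+$ is compactly supported so Claim A gives $\mathcal{L}(f^+)(x)\leq -\alpha^+\log|x|+C$, while Claim B applied to $f^-$ (automatically in $L^1\cap L^\infty_{loc}$) gives $\mathcal{L}(f^-)(x)\geq -\alpha^-\log|x|-C$; subtracting yields $\mathcal{L}(f)(x)\leq -\alpha\log|x|+C$. Case two is entirely symmetric, applying Claim A to $f^-$ and Claim B to $f^+$. The conceptual obstacle is that Lemma \ref{lem: L(f)} by itself only gives an error of order $o(1)\log|x|$, which is too weak for the claimed $O(1)$ bound; the compact-support hypothesis is precisely what upgrades this to a genuine constant, via the uniform Taylor expansion behind Claim A. The most tedious step, I expect, is the careful bookkeeping of constants in Claim B on $\{|y|>1\}$, ensuring independence of $x$, but this is routine once the case split by $|y|\lessgtr|x|$ is set up.
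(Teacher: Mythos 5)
Your proposal is correct and follows essentially the same route as the paper: isolate the compactly supported part, where a uniform expansion of the kernel over the support gives $\mathcal{L}(f^{\pm})(x)=-\alpha^{\pm}\log|x|+O(1)$, and control the remaining signed part by a uniform pointwise lower bound on $\log\frac{|y|\,|x|}{|x-y|}$ — which is exactly the paper's observation that $\log\frac{|x|(|y|+1)}{|x-y|}\geq 0$ for $|x|\geq 1$. One cosmetic slip: in Claim B on $\{|y|\leq 1\}$ the inequality you need is the upper bound $|x-y|\leq 2|x|$ (not $|x-y|\geq |x|/2$), but the resulting lower bound $\log|y|-\log 2$ for the integrand is correct as stated.
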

\begin{proof}
	By direct computation, we have
	\begin{align*}
		&\frac{(n-1)!|\mathbb{S}^n|}{2}(\mathcal{L}(f)(x)+\alpha\log|x|)\\
		=&\int_{\mr^n}\log\frac{|x|\cdot(|y|+1)}{|x-y|}f\ud y+\int_{\mr^n}\log\frac{|y|}{|y|+1}f\ud y\\
		=&\int_{\mr^n}\log\frac{|x|\cdot(|y|+1)}{|x-y|}f^+\ud y-\int_{\mr^n}\log\frac{|x|\cdot(|y|+1)}{|x-y|}f^-\ud y+C.
	\end{align*}
For $|x|\geq 1 $, it is easy to check that 
$$\frac{|x|\cdot(|y|+1)}{|x-y|}\geq 1$$
which shows that 
$$\log\frac{|x|\cdot(|y|+1)}{|x-y|}\geq 0.$$
If $f^+$ has compact support, there exists  $R_1>0$ such that $supp(f^+)\subset B_{R_1}(0)$.
And for $|x|\geq 2R_1$, we have
\begin{align*}
	&\int_{\mr^n}\log\frac{|x|\cdot(|y|+1)}{|x-y|}f^+\ud y\\
	=&\int_{B_{R_1}(0)}\log\frac{|x|\cdot(|y|+1)}{|x-y|}f^+\ud y\\
	\leq &\log\left(2|R_1|+2\right)\int_{B_{R_1}(0)}f^+\ud y\\
	\leq &C
\end{align*}
which yields that 
$$\mathcal{L}(f)(x)\leq -\alpha\log|x|+C,\quad |x|\gg1.$$
Reversely, if $f^-$ has compact support, following the same argument, there holds
$$\int_{\mr^n}\log\frac{|x|\cdot(|y|+1)}{|x-y|}f^-\ud y\leq C,\quad |x|\gg1$$ and then
$$\mathcal{L}(f)(x)\geq -\alpha\log|x|-C,\quad |x|\gg1.$$
\end{proof}

Interestingly, $\mathcal{L}(f)$ exhibits integral estimates for derivatives that are similar to the estimates for Green's representation on compact manifolds (see Lemma 2.3 in \cite{Malchiodi}). The following lemma plays an important role in the proof of the necessity for being  normal solutions.
\begin{lemma}\label{lem: Delta Lf}
For $R\gg1$, there holds
	$$\int_{B_R(0)}|\mathcal{L}(f)|\ud x=O((\log R)\cdot R^n).$$
	In particular, for $n\geq 4$, one has  
	$$\int_{B_R(0)}|\Delta \mathcal{L}(f)|\ud x=O(R^{n-2}),\quad \int_{B_R(0)}|\nabla \mathcal{L}(f)|^2\ud x=O(R^{n-2}).$$
\end{lemma}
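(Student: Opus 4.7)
Set $c_n = \frac{2}{(n-1)!|\mathbb{S}^n|}$. The first estimate is a direct application of Lemma \ref{lem: L(f)}: for $|x|\gg 1$,
\[
\mathcal{L}(f)(x) = (-\alpha + o(1))\log|x| + c_n\int_{B_1(x)}\log\frac{1}{|x-y|}f(y)\ud y.
\]
The first term contributes $O(R^n\log R)$ in $L^1(B_R(0))$, while Fubini bounds the local second term by
\[
\int_{B_R(0)}\bigg|\int_{B_1(x)}\log\frac{1}{|x-y|}f(y)\ud y\bigg|\ud x \le \|f\|_{L^1(\mr^n)}\int_{B_1(0)}\bigl|\log|z|\bigr|\ud z = O(1).
\]
Smoothness of $\mathcal{L}(f)$ handles any fixed compact region, proving the first claim.

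For $n\geq 4$, the kernels $|x-y|^{-1}$ and $|x-y|^{-2}$ are locally integrable, so I differentiate under the integral sign (noting $\log|y|$ is $x$-independent) to obtain
\[
\nabla\mathcal{L}(f)(x) = -c_n\int_{\mr^n}\frac{x-y}{|x-y|^2}f(y)\ud y, \qquad \Delta\mathcal{L}(f)(x) = -c_n(n-2)\int_{\mr^n}\frac{f(y)}{|x-y|^2}\ud y.
\]
To bound $\int_{B_R(0)}|\Delta\mathcal{L}(f)|\ud x$, I apply Fubini and split the $y$-integration into $\{|y|\le 2R\}$ and $\{|y|>2R\}$. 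In the first region $B_R(0)\subset B_{3R}(y)$, so $\int_{B_R(0)}|x-y|^{-2}\ud x \le CR^{n-2}$. In the second, $|x-y|\ge |y|/2$ on $B_R(0)$, giving $\int_{B_R(0)}|x-y|^{-2}\ud x \le CR^n|y|^{-2}$, which when integrated against $|f(y)|$ on $\{|y|>2R\}$ yields at most $CR^n(2R)^{-2}\|f\|_{L^1} = CR^{n-2}\|f\|_{L^1}/4$. Summing produces $\int_{B_R(0)}|\Delta\mathcal{L}(f)|\ud x = O(R^{n-2})$.

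For the $L^2$ bound on $\nabla\mathcal{L}(f)$, Cauchy-Schwarz gives
\[
|\nabla\mathcal{L}(f)(x)|^2 \le c_n^2\bigg(\int_{\mr^n}\frac{|f(y)|}{|x-y|}\ud y\bigg)^2 \le c_n^2\|f\|_{L^1(\mr^n)}\int_{\mr^n}\frac{|f(y)|}{|x-y|^2}\ud y,
\]
so integrating in $x$ over $B_R(0)$ reduces to the Riesz-kernel estimate just established, producing $\int_{B_R(0)}|\nabla\mathcal{L}(f)|^2\ud x = O(R^{n-2})$. The main technical step is the Fubini-plus-dichotomy for the borderline kernel $|x-y|^{-2}$, which must be handled by isolating the near-diagonal region $|y|\le 2R$; everything else is routine.
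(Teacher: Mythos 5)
Your proof is correct and follows essentially the same route as the paper: Fubini plus the near/far dichotomy $\{|y|\le 2R\}$ versus $\{|y|>2R\}$ for the kernels $|x-y|^{-2}$, and Cauchy--Schwarz to reduce the gradient bound to the Laplacian bound. The only cosmetic difference is that for the first estimate the paper splits $\log\tfrac{|y|}{|x-y|}$ directly via Fubini rather than invoking Lemma \ref{lem: L(f)}, but the underlying computation is the same.
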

\begin{proof}
A direct computation and 	 Fubini's theorem yield that
	\begin{align*}
		&\int_{B_R(0)}|\mathcal{L}(f)|\ud x\\
		\leq &C\int_{B_R(0)}\int_{\mr^n\backslash B_{2R}(0)}|\log\frac{ |y|}{|x-y|}|\cdot|f(y)|\ud y\ud x\\
		&+C\int_{B_R(0)}\int_{ B_{2R}(0)}|\log\frac{ |y|}{|x-y|}|\cdot|f(y)|\ud y\ud x\\
		\leq & CR^n+CR^n\int_{B_1(0)}|\log|y||\cdot|f(y)|\ud y\\
		&+CR^n\int_{B_{2R}(0)\backslash B_1(0)}|\log|y||\cdot |f(y)|\ud y\\
		&+\int_{B_{2R}(0)} |f(y)| \int_{B_{3R}(0)}|\log|z||\ud z\\
		\leq &CR^n+C(\log R)R^n\\
		\leq &C(\log R)R^n.
	\end{align*}
	Similarly, for $n\geq 4$, applying  Fubini's theorem again, we have 
	\begin{align*}
		&\int_{B_R(0)}|\Delta\mathcal{L}(f) |\ud x\\
		\leq &C\int_{B_R(0)}\int_{\mr^n}\frac{1}{|x-y|^2}|f(y)|d y\ud x\\
		\leq &C\int_{B_R(0)}\int_{\mr^n\backslash B_{2R}(0)}\frac{1}{|x-y|^2}|f(y)|\ud y\ud x\\
		&+C\int_{B_R(0)}\int_{B_{2R}(0)}\frac{1}{|x-y|^2}|f(y)|\ud y\ud x\\
		\leq &CR^{n-2}+C\int_{B_{2R}(0)}|f(y)|\ud y\int_{B_{3R}(0)}\frac{1}{|x|^2}\ud x\\
		\leq &CR^{n-2}.
	\end{align*}
Using H\"older's inequality and the assumption  $f\in L^1(\mr^n)$, 
there holds
\begin{align*}
	\int_{B_R(0)}|\nabla \mathcal{L}(f)|^2\ud x=&C\int_{B_R(0)}|\int_{\mr^n}\frac{x-y}{|x-y|^2}f(y)\ud y|^2\ud x\\
	\leq & C\int_{B_R(0)}\left(\int_{\mr^n}\frac{1}{|x-y|^2}|f(y)|\ud y\int_{\mr^n}|f(y)|\ud y\right)\ud x\\
	\leq &C\int_{B_R(0)}\int_{\mr^n}\frac{1}{|x-y|^2}|f(y)|\ud y\ud x\\
	\leq &C R^{n-2}.
\end{align*}
\end{proof}
The following Liouville-type theorem may be well-known to experts. For reader's convenience, we would like to  give the  proof by slightly modifying Theorem 5 in \cite{Mar MZ}.
\begin{lemma}\label{lem: polynomial growth is polynomial}
	Foy any integer $m\geq 1,k\geq 0$, suppose that  $(-\Delta)^m\varphi(x)=0$ and 
	$$\int_{B_R(0)}\varphi^+\ud x=o(R^{k+n+1}).$$
	Then $\varphi(x)$ is a polynomial with 
	$\deg\varphi\leq \max\{2m-2,k\}.$
\end{lemma}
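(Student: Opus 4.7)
The plan is to exploit the spherical harmonic decomposition of $\varphi$ together with the specific polynomial structure of radial modes of polyharmonic functions. Writing $\varphi(r\theta)=\sum_{l,j}p_{l,j}(r)Y_{l,j}(\theta)$ with $\{Y_{l,j}\}$ an $L^{2}$-orthonormal basis of spherical harmonics of degree $l$, the equation $(-\Delta)^{m}\varphi=0$ reduces on each spherical mode to a Fuchsian ODE of order $2m$ whose basic solutions are $r^{l+2s}$ and $r^{2-n-l+2s}$ for $s=0,\dots,m-1$. Smoothness of $\varphi$ at the origin eliminates the singular family, so each $p_{l,j}(r)=\sum_{s=0}^{m-1}c_{l,j,s}\,r^{l+2s}$ is a polynomial in $r$. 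In particular the spherical zero-mode $P_{0}(r):=\fint_{\mathbb{S}^{n-1}}\varphi(r\theta)\ud\theta$ is automatically a polynomial of degree at most $2m-2$.

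Next I convert the one-sided growth hypothesis into two-sided control. From $|\varphi|=2\varphi^{+}-\varphi$ and $|P_{0}(r)|\le C(1+r^{2m-2})$ one gets
$$\int_{\mathbb{S}^{n-1}}|\varphi(r\theta)|\ud\theta\le 2\int_{\mathbb{S}^{n-1}}\varphi^{+}(r\theta)\ud\theta+C(1+r^{2m-2}).$$
Since each $|Y_{l,j}|$ is bounded by a constant $C_{l}$, this yields
$$|p_{l,j}(r)|\le C_{l}\int_{\mathbb{S}^{n-1}}\varphi^{+}(r\theta)\ud\theta+C_{l}(1+r^{2m-2}).$$
Multiplying by $r^{n-1}$ and integrating on $(R,2R)$, together with the assumption $\int_{B_{R}(0)}\varphi^{+}\ud x=o(R^{k+n+1})$, produces
$$\int_{R}^{2R}|p_{l,j}(r)|\,r^{n-1}\ud r\le o(R^{k+n+1})+CR^{n+2m-2}.$$

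To extract coefficients, let $s^{*}\in\{0,\dots,m-1\}$ be the largest index with $c_{l,j,s^{*}}\ne 0$. Then $|p_{l,j}(r)|\ge\tfrac{1}{2}|c_{l,j,s^{*}}|\,r^{l+2s^{*}}$ for $r$ large, so the weighted integral on the left is bounded below by $c|c_{l,j,s^{*}}|R^{n+l+2s^{*}}$, giving
$$|c_{l,j,s^{*}}|\,R^{n+l+2s^{*}}\le o(R^{k+n+1})+CR^{n+2m-2}.$$
Dividing by $R^{n+l+2s^{*}}$ and letting $R\to\infty$ forces $c_{l,j,s^{*}}=0$ whenever $l+2s^{*}>\max\{k,2m-2\}$. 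A descending iteration on $s^{*}$ therefore annihilates every coefficient with $l+2s>\max\{k,2m-2\}$, leaving only finitely many non-zero modes and expressing $\varphi$ as a polynomial of degree at most $\max\{2m-2,k\}$.

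The principal obstacle is the one-sided nature of the hypothesis, overcome by the structural observation that the spherical zero-mode of a polyharmonic function is intrinsically a polynomial of degree at most $2m-2$. This permits the exchange of $\int\varphi$ for a benign polynomial remainder and explains why the final threshold cannot drop below $2m-2$. Once this reduction is made, the remainder is a standard dominant-term comparison between the polynomial growth of each $p_{l,j}$ and the weighted $L^{1}$ bound on $\varphi^{+}$.
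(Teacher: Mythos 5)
Your argument is correct, and it reaches the conclusion by a genuinely different route from the paper. The paper works with derivative estimates for polyharmonic functions: it bounds $|D^{l+1}\varphi(x)|$ by $CR^{-l-1}\fint_{B_{2R}(0)}|\varphi|$, writes $|\varphi|\le -\varphi+2\varphi^{+}$, evaluates $\fint_{B_{2R}(0)}\varphi$ exactly via Pizzetti's formula as a polynomial of degree $2m-2$ in $R$, and lets $R\to\infty$ with $l=\max\{2m-2,k\}$ to conclude that all derivatives of order $l+1$ vanish. You instead decompose $\varphi$ into spherical harmonic modes, observe that each radial coefficient is a polynomial $\sum_s c_{l,j,s}r^{l+2s}$, and kill every monomial of degree exceeding $\max\{2m-2,k\}$ by comparing the forced lower bound $R^{n+l+2s^{*}}$ against the upper bound $o(R^{k+n+1})+CR^{n+2m-2}$. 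The two proofs share the essential idea for defeating the one-sided hypothesis: the spherical (resp.\ ball) mean of a polyharmonic function is itself a polynomial of degree at most $2m-2$ --- your statement about the zero mode is exactly Pizzetti's formula on spheres --- which upgrades control of $\varphi^{+}$ to control of $|\varphi|$ up to a harmless $O(r^{2m-2})$ remainder. What the paper's route buys is brevity and no decomposition machinery; what yours buys is an explicit description of which monomials survive. One point you should tighten: the claim that the radial ODE has basic solutions $r^{l+2s}$ and $r^{2-n-l+2s}$ ignores the resonant cases (e.g.\ $l\le m-n/2$) where logarithmic solutions replace some of the second family; the cleanest fix is to invoke the Almansi representation $\varphi=\sum_{s=0}^{m-1}|x|^{2s}h_s$ with $h_s$ entire harmonic (available in the reference \cite{ACL} the paper already cites), which yields your form of $p_{l,j}$ directly, or to note that smoothness forces $p_{l,j}(r)=O(r^{l})$ with $p_{l,j}(r)r^{-l}$ even, which excludes negative powers, non-smooth positive powers, and logarithms alike.
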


\begin{proof}
	With help of Proposition 4 in \cite{Mar MZ}  (or Theorem 2.2.7, p. 29 in \cite{Evans}) as well as Pizzetti's formula (See \cite{Piz} or Lemma 3 in \cite{Mar MZ}) for polyharmonic functions, for any $x\in \mr^n$ and $R> |x|$, we have
	\begin{align*}
		|D^{l+1}\varphi(x)|
		\leq& \frac{C}{R^{l+1}}\frac{1}{|B_R(x)|}\int_{B_R(x)}|\varphi|\ud y\\
			\leq &\frac{C}{R^{l+1}}\frac{2^n}{|B_{2R}(0)|}\int_{B_{2R}(0)}|\varphi|\ud y\\
		\leq & \frac{C\cdot 2^n}{R^{l+1}}\frac{1}{|B_{2R}(0)|}\int_{B_{2R}(0)}(-\varphi+2\varphi^+)\ud y\\
		=&\frac{C\cdot 2^n}{R^{l+1}}\left(-\sum^{m-1}_{i=0}c_i(2R)^{2i}\Delta^i\varphi(0)\right)+\frac{C\cdot 2^n}{R^{l+1}}\frac{1}{|B_{2R}(0)|}\int_{B_{2R}(0)}2\varphi^+\ud y\\
		\leq & CR^{2m-2-l-1}+o(R^{k-l})
\end{align*}
where $c_i$ are positive constants.
If $k>2m-2$, choose $l=k$ and  otherwise choose $l=2m-2$.
By letting $R\to\infty$, we show that $\varphi(x)$ must be a polynomial of degree at most $l$. Thus we finish our proof.
\end{proof}

With above preparations, we will give the proof of  Theorem \ref{thm:decomposition}.

{\bf Proof of Theorem \ref{thm:decomposition}:}
\begin{proof}
		 Set $P(x):=w(x)-\mathcal{L}(f)$. By using Green's function \eqref{Green's function}, we obtain 
		\begin{equation}\label{polyharmonic P}
			(-\Delta)^{\frac{n}{2}}P(x)=0.
		\end{equation} 
	
	Firstly, we show that $P(x)$ is a polynomial function.  
 Since $w(x)$ exhibits polynomial volume growth, we can find $s\geq 0$ such that
 \begin{equation}\label{poly volume growth}
 	\int_{B_R(0)}e^{nw}\ud x=O(R^s).
 \end{equation} 
 Let $q\in (0,1)$ be a small number that satisfies $(\frac{s}{n}-1)q\leq \frac{1}{2}.$
	Then by using H\"older's inequality and Lemma \ref{lem: Delta Lf}, we have  
	\begin{align*}
		\int_{B_R(0)}P^+\ud x
		\leq &	\int_{B_R(0)}w^+\ud x+	\int_{B_R(0)}|\mathcal{L}(f)|\ud x\\
		\leq &\int_{B_R(0)}\frac{1}{q}e^{qw}\ud x+C(\log R)R^n\\
		\leq &\frac{1}{q}(\int_{B_R(0)}e^{nu}\ud x)^{\frac{q}{n}}(\int_{B_R(0)}\ud x)^{1-\frac{q}{n}}+C(\log R)R^n\\
		\leq &CR^{\frac{sq}{n}+n-q}+C(\log R)R^n\\
		\leq &CR^{n+\frac{1}{2}}
	\end{align*}
where we have used the fact $qw^+\leq e^{qw}.$
	Lemma \ref{lem: polynomial growth is polynomial} concludes that $P(x)$ is a polynomial function with a degree no greater than $n-2$.

 When $n=2$, we immediately have $P(x)\equiv C.$ As for the case $n\geq 4$,
 we will show that $P(x)$ has a finite upper bound.  We   argue by contradiction.
	Inspired by the argument of Lemma 11 in \cite{Mar MZ}, we define $$\psi(r):=\sup_{\partial B_r(0)} P.$$
	If $\sup_{\mr^n}P=+\infty$, using Theorem 3.1 in \cite{Gor}, there exists $s_1>0$ such that 
	\begin{equation}\label{psi(r)/r^{s_1}}
		\lim_{r\to\infty}\frac{\psi(r)}{r^{s_1}}=+\infty.
	\end{equation}
Since $P(x)$ is a polynomial of degree at most $n-2$, there holds
	$|\nabla P(x)|\leq C|x|^{n-3}$ for $|x|\gg1$.  There exist $R_1>0$ such that for any $r\geq R_1$, we can find $x_r$ with $|x_r|=r$ such that
	$$P(x)\geq r^{s_1}, \mathrm{for}\; |x-x_r|\leq r^{3-n}.$$
	Then with help of Jensen's inequality and Lemma \ref{lem:B_x^-r_1}, for $R\geq R_1$, we have 
	\begin{align*}
		\int_{B_{2R}(0)}e^{nw}\ud x\geq&\int_{B_{R^{3-n}}(x_R)}e^{nw}\ud x\\
		\geq &e^{nR^{s_1}} \int_{B_{R^{3-n}}(x_R)}e^{n\mathcal{L}(f)}\ud x\\
		\geq &e^{nR^{s_1}}|B_{R^{3-n}}(x_R)|\exp\left(\frac{1}{|B_{R^{3-n}}(x_R)|}\int_{B_{R^{3-n}}(x_R)}n\mathcal{L}(f)\ud x \right)\\
		\geq & |\mathbb{S}^n|e^{nR^{s_1}}R^{n(3-n)}\exp(-C\log R)\\
		= &|\mathbb{S}^n|e^{nR^{s_1}-n(n-3)\log R- C\log R}
	\end{align*}
	which contradicts to \eqref{poly volume growth}. Thus we obtain our desired result
	\begin{equation}\label{Pleq C}
		\sup_{\mr^n}P(x)<+\infty.
	\end{equation}
	
Finally, we discuss the necessary and sufficient conditions for $P(x)$ to be a constant for $n\geq 4$.
 On one hand,  we consider the case (a).
	If $P(x)\equiv C$, by Lemma \ref{lem: Delta Lf}, we have
	$$  \int_{B_R(0)}|\Delta w|\ud x=\int_{B_R(0)}|\Delta\mathcal{L}(f)|\ud x=O(R^{n-2})=o(R^{n}).$$ 
	Conversely, if \eqref{Delta w=o(R^n)} holds, due to Lemma \ref{lem: Delta Lf}, there holds
	\begin{equation}\label{Delta P}
		\int_{B_R(0)}|\Delta P|\ud x\leq \int_{B_R(0)}\left(|\Delta w|+|\Delta\mathcal{L}(f)|\right)\ud x=o(R^{n}).
	\end{equation}
We have $\Delta P=0$ since $\Delta P$ is also a polynomial. Due to  the upper bound \eqref{Pleq C}, Liouville's theorem yields that $P(x)$ must be a constant.

	On the other hand,  we deal with the case (b).  When $P(x)$ is a constant, due to Lemma \ref{lem: Delta Lf}, there holds
	$$\int_{B_R(0)}|w|\ud x\leq \int_{B_R(0)}(|\mathcal{L}(f)|+C)\ud x=O(\log R \cdot R^n)+O(R^n)=o(R^{n+2}).$$ Conversely, if \eqref{w=o(R^n+2)} holds, with help of Lemma \ref{lem: Delta Lf} again, one has 
	$$\int_{B_R(0)}|P(x)|\ud x\leq \int_{B_R(0)}(|w|+|\mathcal{L}(f)|)\ud x=o(R^{n+2}).$$
Then, due to $P(x)$ is polynomial and the upper bound  \eqref{Pleq C}, we must have $P(x)\equiv C$.
 
	Finally,  we finish our proof.
\end{proof}

In fact, with help of a Liouville type theorem Lemma \ref{lem: polynomial growth is polynomial}, we will get a more general decomposition theorem as follows.
\begin{theorem}
	Let $n\geq 2$ be an even integer and consider a smooth solution $w(x)$ to \eqref{w equation} with smooth $f\in L^1(\mathbb{R}^n)$. Suppose that for some integer $k\geq0$,
	\begin{equation}\label{condtion for w^+}
		\fint_{B_R(0)}w^+\ud x=o(R^{k+1}).
	\end{equation}
	Then 
	$$w(x)=\mathcal{L}(f)+P(x)$$
	where $P(x)$ is a polynomial with $\deg(P)\leq \max\{k,n-2
	\}$.
\end{theorem}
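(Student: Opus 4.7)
The strategy is to mimic the first half of the proof of Theorem \ref{thm:decomposition}, replacing the polynomial volume growth hypothesis (which was used to bound $\int_{B_R(0)} w^+\,dx$ via $qw^+\leq e^{qw}$ and H\"older's inequality) by the direct pointwise-in-$R$ bound on $\fint_{B_R(0)} w^+\,dx$ now assumed in \eqref{condtion for w^+}. Crucially, we do not need the second half of that proof (the upper bound argument using Gorin's theorem and Jensen's inequality on $e^{nw}$), since here we are not asserting $P(x)\leq C$; we only need that $P$ is a polynomial of controlled degree, which follows directly from the Liouville-type Lemma \ref{lem: polynomial growth is polynomial}.

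First I would set $P(x):=w(x)-\mathcal{L}(f)(x)$. By the Green's function identity \eqref{Green's function} we have $(-\Delta)^{n/2}P(x)=0$, exactly as in the proof of Theorem \ref{thm:decomposition}. So $P$ is polyharmonic of order $m=n/2$, and the task reduces to showing $P$ has controlled polynomial growth.

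Next I would estimate $\int_{B_R(0)}P^+\,dx$. Using $P^+\leq w^+ + |\mathcal{L}(f)|$, together with the hypothesis \eqref{condtion for w^+}, one gets
\begin{equation*}
\int_{B_R(0)} w^+\,dx = |B_R(0)|\cdot \fint_{B_R(0)} w^+\,dx = o(R^{k+1})\cdot O(R^n) = o(R^{k+n+1}).
\end{equation*}
By Lemma \ref{lem: Delta Lf}, $\int_{B_R(0)}|\mathcal{L}(f)|\,dx = O((\log R)R^n) = o(R^{k+n+1})$ for any $k\geq 0$. Adding these two bounds yields
\begin{equation*}
\int_{B_R(0)} P^+\,dx = o(R^{k+n+1}).
\end{equation*}

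Finally, I would apply Lemma \ref{lem: polynomial growth is polynomial} to $\varphi=P$ with $m=n/2$ and the given $k$. This immediately gives that $P$ is a polynomial with $\deg P \leq \max\{2m-2, k\}=\max\{n-2, k\}$, which is the desired conclusion. There is no real obstacle here; the only thing to verify carefully is that the Liouville lemma's growth-hypothesis exponent $k+n+1$ matches what we have produced, which it does exactly. Note that in contrast with Theorem \ref{thm:decomposition} we make no claim that $P\leq C$, so the more delicate argument using polynomial volume growth of $e^{nw}$ and Gorin's theorem is not needed in this generalization.
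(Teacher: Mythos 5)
Your proposal is correct and follows essentially the same route as the paper's own proof: set $P=w-\mathcal{L}(f)$, observe $(-\Delta)^{n/2}P=0$, bound $\int_{B_R(0)}P^+\,\ud x$ by the hypothesis plus Lemma \ref{lem: Delta Lf}, and conclude via Lemma \ref{lem: polynomial growth is polynomial}. The only cosmetic difference is that you convert the averaged hypothesis $\fint_{B_R(0)}w^+\,\ud x=o(R^{k+1})$ into the total-integral form $o(R^{k+n+1})$ explicitly, which the paper does implicitly.
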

\begin{proof}
	Set $P(x)=w(x)-\mathcal{L}(f)$ and we have
	$$(-\Delta)^{\frac{n}{2}}P(x)=0.$$
	With help of Lemma \ref{lem: Delta Lf} and the assumption \eqref{condtion for w^+}, there holds
	\begin{align*}
		\fint_{B_R(0)}P^+\ud x\leq &\fint_{B_R(0)}w^+\ud x+\fint_{B_R(0)}|\mathcal{L}(f)|\ud x\\
		\leq &o(R^{k+1})+C\log R\\
		=&o(R^{k+1}).
	\end{align*}
Applying Lemma \ref{lem: polynomial growth is polynomial}, one has $P(x)$ is a polynomial and $\deg(P)\leq \max\{k,n-2\}$.
\end{proof}

With help of the  decomposition in Theorem \ref{thm:decomposition}, we can establish the following volume comparison theorem.
\begin{theorem}\label{thm: lambda leq 1-alpha}
	Consider a smooth solution $w(x)$ to \eqref{w equation}, where $n\geq 2$ is an even integer with smooth $f\in L^1(\mr^n)$.  
	\begin{enumerate}[(a)]
		\item Assume that $\mathcal{V}_{sup}(e^{nw})<+\infty$.
		Then we have $
		\mathcal{V}_{\sup}(e^{nw})\leq (1-\alpha)^+.$
		If  $\alpha>1$ in addition, the volume is finite i.e. $e^{nw}\in L^1(\mr^n)$.
		\item If $w(x)$ is a normal solution, then the limit of volume growth exists and 
		\begin{equation}\label{V_e^nw=1-alpha_0 in Theorem }
			\mathcal{V}(e^{nw})=(1-\alpha)^+.
		\end{equation}
	\end{enumerate}
	
\end{theorem}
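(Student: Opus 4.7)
The plan is to deduce Theorem \ref{thm: lambda leq 1-alpha} as a direct corollary of the decomposition in Theorem \ref{thm:decomposition} combined with the sharp asymptotic estimate for $\mathcal{L}(f)$ obtained in Lemma \ref{lem: e^nLf B_R}. The philosophy is: once polynomial volume growth is assumed, the ``polynomial part'' of $w$ is bounded above, so the volume asymptotics of $e^{nw}$ are governed entirely by those of $e^{n\mathcal{L}(f)}$.

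For part (a), I would start by noting that the hypothesis $\mathcal{V}_{\sup}(e^{nw})<+\infty$ is precisely the polynomial volume growth hypothesis needed to invoke Theorem \ref{thm:decomposition}. Applying that theorem, I obtain
\[
w(x) = \mathcal{L}(f)(x) + P(x), \qquad P(x) \leq C_0,
\]
where $P$ is a polynomial of degree at most $n-2$. The key consequence is the pointwise bound $e^{nw(x)} \leq e^{nC_0}\, e^{n\mathcal{L}(f)(x)}$, valid on all of $\mathbb{R}^n$. Integrating on $B_R(0)$ and taking logs,
\[
\frac{\log \int_{B_R(0)} e^{nw}\,dx}{\log |B_R(0)|} \leq \frac{nC_0}{\log|B_R(0)|} + \frac{\log \int_{B_R(0)} e^{n\mathcal{L}(f)}\,dx}{\log|B_R(0)|}.
\]
Passing to $\limsup$ as $R\to\infty$ and invoking Lemma \ref{lem: e^nLf B_R} yields $\mathcal{V}_{\sup}(e^{nw}) \leq \mathcal{V}(e^{n\mathcal{L}(f)}) = (1-\alpha)^+$. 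For the second assertion, when $\alpha>1$, the same pointwise bound together with the $L^1$ conclusion of Lemma \ref{lem: e^nLf B_R} gives $\int_{\mathbb{R}^n} e^{nw}\,dx \leq e^{nC_0}\int_{\mathbb{R}^n} e^{n\mathcal{L}(f)}\,dx < +\infty$.

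For part (b), the argument is even simpler: by Definition \ref{def: normal solution} a normal solution satisfies $w(x) = \mathcal{L}(f)(x) + C$ identically, so $e^{nw} = e^{nC}\, e^{n\mathcal{L}(f)}$ pointwise. The limit $\mathcal{V}(e^{n\mathcal{L}(f)}) = (1-\alpha)^+$ supplied by Lemma \ref{lem: e^nLf B_R} then transfers verbatim to $e^{nw}$, giving both existence of the limit and the identity \eqref{V_e^nw=1-alpha_0 in Theorem }.

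There is no real obstacle here; all the analytic work is already contained in Lemma \ref{lem: e^nLf B_R} (two-sided volume asymptotics for the logarithmic potential) and in Theorem \ref{thm:decomposition} (which extracts the one-sided bound $P \leq C$ on the polyharmonic polynomial remainder). The only point worth double-checking is that the upper bound $P \leq C$, rather than a full pointwise equality with a constant, is indeed sufficient: since $e^{nw} \leq e^{nC}e^{n\mathcal{L}(f)}$ is a one-sided estimate, it only controls $\mathcal{V}_{\sup}$ from above, which is exactly what part (a) claims. No lower matching estimate is needed in (a) because without the normal-solution assumption the polynomial $P$ could genuinely be unbounded below and drive the local volume much smaller than $e^{n\mathcal{L}(f)}$; this is the reason part (b) requires the strictly stronger normal-solution hypothesis to promote $\limsup$ to a genuine limit.
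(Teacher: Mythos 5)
Your proposal is correct and follows essentially the same route as the paper: invoke Theorem \ref{thm:decomposition} to write $w=\mathcal{L}(f)+P$ with $P\leq C$, use the resulting pointwise bound $e^{nw}\leq e^{nC}e^{n\mathcal{L}(f)}$ together with Lemma \ref{lem: e^nLf B_R} for part (a) and for the finiteness when $\alpha>1$, and use the definition of a normal solution directly for part (b). Your closing remark about why the one-sided bound $P\leq C$ suffices for (a) but not for upgrading to a genuine limit matches the logic of the paper's argument.
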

\begin{proof}
If $\mathcal{V}_{sup}(e^{nw})<+\infty$, with help of Theorem \ref{thm:decomposition}, we have the decomposition
	\begin{equation}\label{w=Lf+P}
		w(x)=\mathcal{L}(f)(x)+P(x)
	\end{equation}
	where $P(x)$ is a polynomial and $P(x)\leq C.$
	Then
	\begin{equation}\label{EQ:e^nu leq e^nLQEnu}
		\int_{B_R(0)}e^{nw}\ud x= \int_{B_R(0)}e^{n\mathcal{L}(f)(x)+nP(x)}\ud x\leq C\int_{B_R(0)}e^{n\mathcal{L}(f)}\ud x
	\end{equation}
	which yields that
	\begin{equation}\label{lambda e^nw leq enLf}
		\mathcal{V}_{sup}(e^{nw})\leq \mathcal{V}_{sup}(e^{n\mathcal{L}(f)}).
	\end{equation}
	Applying Lemma \ref{lem: e^nLf B_R} and the estimate  \eqref{lambda e^nw leq enLf}, one has
	$$\mathcal{V}_{sup}(e^{nw})\leq  (1-\alpha)^+.$$
	If $\alpha>1,$ Lemma \ref{lem: e^nLf B_R} combing with \eqref{EQ:e^nu leq e^nLQEnu} deduces that
	$$\int_{\mr^n}e^{nw}\ud x<+\infty.$$ 
	
	Specially, if  $w(x)$ is a normal solution, there holds
	$$w(x)=\mathcal{L}(f)(x)+C.$$
	Due to Lemma \ref{lem: e^nLf B_R},  then the limit of volume growth exists and 
	$$\mathcal{V}(e^{nw})=(1-\alpha)^+.$$

	Thus we finish our proof.
\end{proof}
\begin{remark}
	It is easy to see from \eqref{V_e^nw=1-alpha_0 in Theorem } that the volume is infinite for normal solutions with $\alpha<1$. However, the case $\alpha=1$ is more delicate. It is possible for both infinite and finite volume to occur. See Section \ref{section: example}.
\end{remark} 

Until now, we haven't considered the assumption on the completeness of the metric. It is also interesting to note that if we replace the completeness assumption with the assumption of finite volume, we obtain a reversed Cohn-Vossen inequality.
For reader's convenience, for $n=2$, we rewrite \eqref{Q-curvature } as 
\begin{equation}\label{Gaussian equation}
	-\Delta u=Ke^{2u}
\end{equation}
where both $K$ and $u$ are smooth functions on $\mr^2$.
\begin{theorem}\label{thm: reversed Cohn-Voseen}
	Consider the equation \eqref{Gaussian equation}.
	If the volume is finite and the the negative part of Gaussian curvature is integrable i.e.
	$$\int_{\mr^2}e^{2u}\ud x<+\infty
	\quad \mathrm{and} \quad \int_{\mr^2}K^-e^{2u}\ud x <+\infty,$$
	then there holds
	\begin{equation}\label{K e^2u geq 2pi}
		\int_{\mr^2}Ke^{2u}\ud x\geq 2\pi.
	\end{equation}
\end{theorem}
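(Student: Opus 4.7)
The strategy is to split on whether $K^+ e^{2u}$ is integrable and then reduce the nontrivial case to the decomposition theorem from Section \ref{2}. If $\int_{\mr^2} K^+ e^{2u}\,\ud x = +\infty$, then combined with the hypothesis $\int_{\mr^2} K^- e^{2u}\,\ud x < +\infty$ one gets $\int_{\mr^2} Ke^{2u}\,\ud x = +\infty$, and \eqref{K e^2u geq 2pi} holds vacuously. Hence the real content lies in the complementary case, where $f := Ke^{2u}$ belongs to $L^1(\mr^2)\cap L^\infty_{\mathrm{loc}}(\mr^2)$ and \eqref{Gaussian equation} is a special case of \eqref{w equation} with $n=2$.

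In this remaining case I would first note that the finite-volume hypothesis trivially bounds $\int_{B_R(0)} e^{2u}\,\ud x$ uniformly in $R$, so $\mathcal{V}_{sup}(e^{2u}) = 0$ and $u$ has polynomial volume growth in the sense of Theorem \ref{thm:decomposition}. Applying that theorem with $n=2$ and $w=u$ produces a decomposition $u(x) = \mathcal{L}(f)(x) + P(x)$ where $P$ is a polynomial of degree at most $n-2 = 0$; consequently $P$ is a constant, and $u$ is automatically a normal solution in the sense of Definition \ref{def: normal solution}. This is the key structural observation: in dimension two the degree bound eliminates all polynomial ambiguity for free, with no extra decay assumption on $\Delta u$ or $u$ of the type required for $n\ge 4$.

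Once $u$ is known to be normal, Theorem \ref{thm: lambda leq 1-alpha}(b) delivers the exact volume growth identity $\mathcal{V}(e^{2u}) = (1-\alpha)^+$ with $\alpha = \frac{1}{2\pi}\int_{\mr^2} Ke^{2u}\,\ud x$. The assumption $\int_{\mr^2} e^{2u}\,\ud x < +\infty$ forces $\mathcal{V}(e^{2u}) = 0$, hence $(1-\alpha)^+ = 0$, i.e.\ $\alpha \geq 1$, which is exactly the inequality $\int_{\mr^2} Ke^{2u}\,\ud x \geq 2\pi$. The only point demanding care is verifying that Theorem \ref{thm:decomposition} is truly applicable under the single integrability assumption on $Ke^{2u}$; but since $u$ and $K$ are smooth, $f=Ke^{2u}$ is automatically smooth and in $L^\infty_{\mathrm{loc}}$, so the hypotheses of the decomposition theorem are satisfied verbatim. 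Thus the main analytic work has already been carried out in Section \ref{2}, and the reversed Cohn--Vossen inequality is essentially a corollary of the normal-solution representation plus the sharp volume growth formula.
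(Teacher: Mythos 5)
Your proposal is correct and follows essentially the same route as the paper: dispose of the case $\int_{\mr^2}K^+e^{2u}\,\ud x=+\infty$ trivially, note that finite volume gives zero volume growth so Theorem \ref{thm:decomposition} (with the degree bound $n-2=0$) forces $u$ to be normal, and then read off $\alpha\geq 1$ from the volume growth identity of Theorem \ref{thm: lambda leq 1-alpha}. The only difference is that you spell out the steps (in particular why $(1-\alpha)^+=0$ yields the inequality) more explicitly than the paper does.
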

\begin{proof}
If the integral $\int_{\mathbb{R}^2} K^+ e^{2u} \ud x = \infty$, the inequality \eqref{K e^2u geq 2pi} is automatically satisfied due to the condition $\int_{\mathbb{R}^2} K^- e^{2u} \ud x < \infty$. Therefore, we only need to consider the case where $\int_{\mathbb{R}^2} K^+ e^{2u} \ud x < \infty$, which implies that the total Gaussian curvature is finite. Under the assumption of finite volume, the volume entropy $\tau(g)$ is zero. According to Theorem \ref{thm:decomposition}, the solution $u$ must be a normal solution. By utilizing Theorem \ref{thm: lambda leq 1-alpha}, we obtain
	$$\int_{\mr^2}Ke^{2u}\ud x\geq 2\pi.$$
\end{proof}
\begin{remark}
	Actually, for the two-dimensional case, Lytchak provided a geometric proof (Theorem 1.6 in \cite{Ly}) and as he expected  in Remark 4.1 of \cite{Ly}, we  present an analytical proof based on the works of Huber \cite{Hu} here. 
\end{remark}

For higher-order cases, the finite volume assumption alone is insufficient to establish a lower bound on the total Q-curvature (refer to \cite{Chang-Chen}). In order to obtain a similar result to Theorem \ref{thm: reversed Cohn-Voseen}, additional restrictions on $\Delta u$ are required to ensure that the solution is normal, utilizing Theorem \ref{thm:decomposition}.
\begin{theorem}\label{thm: reversed Cohn-Voseen inequality for Q-curvature}
	Consider the equation \eqref{Q-curvature } with $n\geq 4$ is an even integer. Suppose that 
	$$\int_{\mr^n}e^{nu}\ud x<+\infty,\quad \int_{\mr^n}Q_g^-e^{nu}\ud x<+\infty$$
	and 
	\begin{equation}\label{Delta u behavior}
		\int_{B_R(0)}|\Delta u|\ud x=o(R^n).
	\end{equation}
	Then there holds
	$$\int_{\mr^n}Q_ge^{nu}\ud x\geq \frac{(n-1)!|\mathbb{S}^n|}{2}.$$
\end{theorem}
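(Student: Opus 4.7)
The plan is to follow the same strategy as the two-dimensional statement Theorem \ref{thm: reversed Cohn-Voseen}, but to use Theorem \ref{thm:decomposition}(a) in place of the trivial degree-zero observation that made the $n=2$ case easy. The hypothesis \eqref{Delta u behavior} is designed precisely so that condition (a) of that decomposition theorem can be invoked to upgrade $u$ from a generic polynomial-volume-growth solution to a normal one; once this upgrade is in hand, Theorem \ref{thm: lambda leq 1-alpha}(b) supplies the desired bound on $\alpha$.

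First I would dispose of the trivial dichotomy. If $\int_{\mathbb{R}^n} Q_g^+ e^{nu}\,\mathrm dx = +\infty$, the hypothesis $\int_{\mathbb{R}^n} Q_g^- e^{nu}\,\mathrm dx < +\infty$ immediately yields $\int_{\mathbb{R}^n} Q_g e^{nu}\,\mathrm dx = +\infty$ and the inequality is vacuous. So I may assume $\int_{\mathbb{R}^n} |Q_g| e^{nu}\,\mathrm dx < +\infty$, i.e.\ the total Q-curvature is finite. In particular $f := Q_g e^{nu}$ lies in $L^1(\mathbb{R}^n)$, which is the standing hypothesis needed to invoke Theorem \ref{thm:decomposition} for the equation $(-\Delta)^{n/2} u = f$.

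Next I would check polynomial volume growth. Since $\int_{\mathbb{R}^n} e^{nu}\,\mathrm dx < +\infty$, the numerator in the definition of $\mathcal{V}_{sup}(e^{nu})$ is bounded above while $\log |B_R(0)| \to +\infty$, so $\mathcal{V}_{sup}(e^{nu}) \le 0$; in particular $u$ has polynomial volume growth. Theorem \ref{thm:decomposition} therefore produces the decomposition $u = \mathcal{L}(f) + P$ with $P$ a polynomial of degree at most $n-2$ satisfying $P \le C$. The assumption \eqref{Delta u behavior} is exactly hypothesis (a) of that theorem, so $P \equiv C$; hence $u$ is a normal solution.

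Finally, because $u$ is normal, Theorem \ref{thm: lambda leq 1-alpha}(b) gives $\mathcal{V}(e^{nu}) = (1-\alpha)^+$, where $\alpha = \frac{2}{(n-1)!|\mathbb{S}^n|} \int_{\mathbb{R}^n} Q_g e^{nu}\,\mathrm dx$. Combining with $\mathcal{V}(e^{nu}) \le 0$ from finite volume forces $(1-\alpha)^+ = 0$, i.e.\ $\alpha \ge 1$, which is the desired inequality. The main (and essentially only) obstacle is verifying that we are genuinely in the setting of Theorem \ref{thm:decomposition}(a); this is where \eqref{Delta u behavior} is indispensable and is the point where the higher-order argument truly diverges from the two-dimensional one, in which the polynomial $P$ is automatically constant since its degree is bounded by $n-2=0$.
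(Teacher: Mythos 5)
Your argument is correct and is precisely the proof the paper intends: the paper omits the details, saying only that the argument is "essentially the same" as the two-dimensional case, and you have correctly filled in the one genuinely new step, namely that hypothesis \eqref{Delta u behavior} is exactly condition (a) of Theorem \ref{thm:decomposition} and is what replaces the automatic degree bound $\deg P\leq n-2=0$ available when $n=2$. The reduction to finite total Q-curvature, the use of finite volume to get $\mathcal{V}_{sup}(e^{nu})\leq 0$, and the final appeal to Theorem \ref{thm: lambda leq 1-alpha}(b) all match the paper's route.
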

\begin{proof}
The proof is essentially the same as that of Theorem \ref{thm: reversed Cohn-Voseen}, and therefore we omit it here.
\end{proof}

\section{Strong $A_\infty$ weight  and length comparison}\label{3}

Consider the measure distance
$$\delta(x,y)=\left(\int_{B_{x,y}}e^{nu}\ud z\right)^{\frac{1}{n}}$$
where
$B_{x,y}$ is the Euclidean  ball in $\mr^n$ with diameter $|x-y|$ that contains $x$ and $y$ i.e.
$$B_{x,y}=B_{\frac{|x-y|}{2}}(\frac{x+y}{2}).$$
For a positive locally integrable  function $\varphi(x)$ on $\mr^n$, we say $\varphi(x)$ is an  $A_\infty$ weight if for all balls $B$
there holds
\begin{equation}\label{A_infty}
	\frac{1}{|B|}\int_{B}\varphi\ud x\leq C\exp\left(\frac{1}{|B|}\int_{B}\log\varphi\ud x\right).
\end{equation}
More equivalent definitions of $A_\infty$ weight can be found in \cite{Se}.  The definition of $A_\infty$ weight \eqref{A_infty}  taken  here was firstly introduced by Hru\v s\v cev  in \cite{Hru}. 
	 If we suppose the volume term $e^{nu}$ is an $A_\infty$ weight, 
	an important property of $A_\infty$ weight (See Proposition 3.12 in \cite{Se} or Lemma 3.5 in \cite{ACT}) shows that
	\begin{equation}\label{d_gleq delta_g}
	d_g(x,p)\leq C\delta(x,p)
	\end{equation}
where $C$ is a constant  independent of $x$ and $p$.
	In \cite{DS}, David and Semmes introduced a strong $A_\infty$ weight which is an $A_\infty$ weight additionally satisfying 
	\begin{equation}\label{strong A_infty}
		d_g(x,p)\geq C\delta(x,p).
	\end{equation}
	Briefly, for a strong $A_\infty$ weight, the measure distance and geodesic distance are equivalent. In harmonic analysis, $A_p$ weight has many interesting properties and we refer the interested reader to Chapter \uppercase\expandafter{\romannumeral5} of Stein's well-known monograph \cite{Stein}.
	With help of the property of strong  $A_\infty$ weight, we can  generalize a result of Li and Tam (See Corollary 3.3 in \cite{LT}).

\begin{lemma}\label{lem: inf V-g/V_0}
	Suppose  that $g=e^{2u}|dx|^2$ is a normal metric on $\mr^n$ where $n\geq2 $ is an even integer and  $u(x)$ satisfies  \eqref{Q-curvature } with $Q_ge^{nu}\in L^1(\mr^n)$, for fixed $p\in\mr^n$,
	there holds
	$$\lim_{R\to\infty}\frac{\log V_g(B_R(p))}{\log |B_R(p)|}=\lim_{|x|\to\infty} \frac{\log\delta(x,p)}{\log|x-p|}= (1-\alpha_0)^+.$$
\end{lemma}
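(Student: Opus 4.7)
The plan is to reduce both limits to the volume growth asymptotic for the logarithmic potential contained in Theorem \ref{thm: lambda leq 1-alpha}(b). Since $g$ is normal, I can write $u(x)=\mathcal{L}(f)(x)+C$ with $f=Q_g e^{nu}\in L^1(\mr^n)$, so that the normalized integral $\alpha$ from Section \ref{2} coincides with $\alpha_0$. Theorem \ref{thm: lambda leq 1-alpha}(b) then gives
$$\lim_{R\to\infty}\frac{\log V_g(B_R(0))}{\log |B_R(0)|}=(1-\alpha_0)^+.$$

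For the first equality at an arbitrary base point $p$, I would just squeeze. From $B_{R-|p|}(0)\subset B_R(p)\subset B_{R+|p|}(0)$ and $\log|B_R(p)|=n\log R+O(1)$, both $\log V_g(B_{R\pm|p|}(0))/\log|B_R(p)|$ tend to $(1-\alpha_0)^+$, giving the first limit.

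For the second equality I would estimate $\delta(x,p)^n=\int_{B_{x,p}}e^{nu}\,\ud z$ two-sidedly. The upper bound is immediate from $B_{x,p}\subset B_{|x|+|p|}(0)$ and the $p=0$ case already proved: $\delta(x,p)^n\leq V_g(B_{|x|+|p|}(0))\leq (|x|+|p|)^{n(1-\alpha_0)^{+}+o(1)}$. For the lower bound, set $m=(x+p)/2$; since $|m|\sim |x|/2$ and the radius of $B_{x,p}$ is $|x-p|/2$, for any fixed $r\in(0,1/2)$ the ball $B_{r|m|}(m)$ lies inside $B_{x,p}$ once $|x|$ is large. Applying Jensen's inequality together with Lemma \ref{lem:B_r_0|x|} at the moving center $m$ (with $|m|\to\infty$) yields
$$\int_{B_{r|m|}(m)}e^{nu}\,\ud z\geq |B_{r|m|}(m)|\exp\Bigl(\fint_{B_{r|m|}(m)}n\mathcal{L}(f)\,\ud z+nC\Bigr)\geq c\,|x|^{n(1-\alpha_0)+o(1)}.$$
When $\alpha_0<1$ this is sharp; when $\alpha_0\geq 1$ the observation that $\delta(x,p)$ is bounded below by a positive constant (since $B_{x,p}$ eventually contains a fixed ball) already matches $(1-\alpha_0)^+=0$. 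Dividing both resulting bounds on $\log\delta(x,p)$ by $\log|x-p|$ and using $\log(|x|\pm|p|)/\log|x-p|\to 1$ produces the value $(1-\alpha_0)^+$.

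The most delicate point I anticipate is the lower bound above: I must apply Lemma \ref{lem:B_r_0|x|} at a center $m$ that itself drifts to infinity at a rate comparable to $|x|$, and verify that the $o(1)$ error involving $\log|m|$ is harmless after division by $\log|x-p|$. Everything else reduces to routine squeezes together with the already-established volume entropy identity $\mathcal{V}(e^{nu})=(1-\alpha_0)^+$.
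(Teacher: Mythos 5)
Your proposal is correct and follows essentially the same route as the paper: decompose $u=\mathcal{L}(Q_ge^{nu})+C$, invoke Theorem \ref{thm: lambda leq 1-alpha} for the volume growth at the origin, squeeze to move the center to $p$, and bound $\delta(x,p)$ from below via Jensen's inequality and Lemma \ref{lem:B_r_0|x|} applied at a center drifting to infinity (the paper uses $B_{|x|/8}(x/2)\subset B_{x,p}$, you use $B_{r|m|}(m)$ with $m=(x+p)/2$, which is the same idea) and from above by $V_g(B_{2|x|}(0))$. The delicate point you flag is harmless exactly as you suspect, since $\log|m|=\log|x|+O(1)$.
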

\begin{proof}
	Since $u(x)$ is a normal solution, 
we have the decomposition using logrithmic potential: 
	$$u(x)=\mathcal{L}(Q_ge^{nu})(x)+C.$$
	With help of Theorem \ref{thm: lambda leq 1-alpha}, there holds
	\begin{equation}\label{e^nu on B_R(0)}
		\lim_{R\to\infty}\frac{\log V_g(B_R(0))}{\log |B_R(0)|}=(1-\alpha_0)^+.
	\end{equation}
	For each fixed $p\in\mr^n$ and  $R\gg1$, we have
	$$\int_{B_{R/2}(0)}e^{nu}\ud x\leq \int_{B_R(p)}e^{nu}\ud x\leq \int_{B_{2R}(0)}e^{nu}\ud x$$
	which yields that
	$$\lim_{R\to\infty}\frac{\log V_g(B_R(p))}{\log |B_R(p)|}=(1-\alpha_0)^+.$$

For $|x|\gg1$, it is not hard to check that 
$$B_{\frac{1}{8}|x|}(\frac{x}{2})\subset B_{x,p}=B_{\frac{|x-p|}{2}}(\frac{x+p}{2}).$$
Then, with help of Jensen's inequality and Lemma \ref{lem:B_r_0|x|}, it follows that
\begin{align*}
	\log\delta(x,p)\geq &\frac{1}{n}\log\int_{B_{\frac{1}{8}|x|}(\frac{x}{2})}e^{nu}\ud y\\
	=&\frac{1}{n}\log\fint_{B_{\frac{1}{8}|x|}(\frac{x}{2})}e^{nu}\ud y+\log|x|+\log\frac{|\mathbb{S}^n|^{\frac{1}{n}}}{8}\\
	\geq &\fint_{B_{\frac{1}{8}|x|}(\frac{x}{2})}u\ud y+\log|x|+C\\
	\geq &(1-\alpha_0+o(1))\log|x|+C
\end{align*}
which yields that 
	 $$
\lim_{|x|\to\infty}\inf \frac{\log\delta(x,p)}{\log|x-p|}\geq 1-\alpha_0.$$
It is not hard to check $|B_1(p)\cap B_{x,p}|\geq\frac{|\mathbb{S}^n|}{4}$ for $|x|\gg1$ and then
$$ \lim_{|x|\to\infty}\inf \frac{\log\delta(x,p)}{\log|x-p|}\geq0$$
which yields that 
\begin{equation}\label{inf delta_g/delta_0}
 \lim_{|x|\to\infty}\inf \frac{\log\delta(x,p)}{\log|x-p|}\geq (1-\alpha_0)^+.
\end{equation}
For $|x|\gg1$, one may easily check that 
$$\delta(x,p)^n\leq\int_{B_{2|x|}(0)}e^{nu}\ud y.$$
Apply  the estimate \eqref{e^nu on B_R(0)} to get
$$\lim_{|x|\to\infty}\sup\frac{\log\delta(x,p)}{\log|x-p|}\leq (1-\alpha_0)^+.$$ Combing with \eqref{inf delta_g/delta_0}, one has
$$\lim_{|x|\to\infty}\frac{\log\delta(x,p)}{\log|x-p|}=(1-\alpha_0)^+.$$

\end{proof}

Now, we are going to give the prove of Theorem \ref{thm: length comparison identity} and show some examples in this section.

{\bf Proof of Theorem \ref{thm: length comparison identity}:}
\begin{proof}
	
	Choose the curve $\gamma(t)=t\frac{x}{|x|}$ and the point $p_1=|p|\frac{x}{|x|}$. Then one has 
	$$d_g(x,p)\leq d_g(p_1,p)+d_g(x,p_1)\leq C+ \int_{|p|}^{|x|}e^{u(\gamma(t))}\ud t.$$
	To estimate the second term, we firstly claim that for $|x|\gg1$, there holds
	\begin{equation}\label{cliam for distance}
		\int^{|x|+\frac{1}{2}}_{|x|-\frac{1}{2}}e^{u(\gamma(t))}\ud t\leq |x|^{-\alpha_0+o(1)}.
	\end{equation}
	Using Lemma \ref{lem: L(f)} and the assumption $u(z)$ is normal, we have
	\begin{equation}\label{equ:u leq Q^+_g}
		u(z)\leq(-\alpha_0+o(1))\log|z|+\frac{2}{(n-1)!|\mathbb{S}^n|}\int_{B_{1}(z)}\log\frac{1}{|z-y|}Q^+_g(y)e^{nu(y)}\ud y.
	\end{equation}
	By slightly modifying Lemma \ref{lem: L(f)},
	we can easily get
	$$u(z)\leq (-\alpha_0+o(1))\log|z|+\frac{2}{(n-1)!|\mathbb{S}^n|}\int_{B_{1/4}(z)}\log\frac{1}{|z-y|}Q_g^+(y)e^{nu(y)}\ud y.$$
	If $Q_g^+=0$ a.e. on $B_1(x)$, for   any $z\in B_{1/2}(x)$ ,
	there holds 
	$$u(z)\leq (-\alpha_0+o(1))\log|x|.$$
	Then the claim follows in this case.
	Otherwise, since $Q_ge^{nu}\in L^1(\mr^n)$, there exists $R_1>0$ such that for any $|x|>R_1$, 
	$$\int_{B_1(x)}Q_g^+(y)e^{nu(y)}\ud y\leq \frac{(n-1)!|\mathbb{S}^n|}{4}.$$ 
	The estimate \eqref{equ:u leq Q^+_g} and Jensen's inequality yield that for $|x|>R_1$ and $z\in B_{1/2}(x)$
	\begin{align*}
		e^{u(z)}= &Ce^{\mathcal{L}(Q_ge^{nu})}\\
		\leq &|x|^{-\alpha_0+o(1)}\exp\left(\frac{2}{(n-1)!|\mathbb{S}^n|}\int_{B_1(x)}\log\frac{1}{|z-y|}Q_g^+(y)e^{nu}\ud y\right)\\
		\leq &|x|^{-\alpha_0+o(1)}\int_{B_1(x)}(\frac{1}{|z-y|})^{\frac{2\|Q_g^+e^{nu}\|_{L^1(B_1(x))}}{(n-1)!|\mathbb{S}^n|}}\frac{Q_g^+(y)e^{nu}}{\|Q_g^+e^{nu}\|_{L^1(B_1(x))}}\ud y\\
		\leq &|x|^{-\alpha_0+o(1)}\int_{B_1(x)}\frac{1}{\sqrt{|z-y|}}\frac{Q_g^+(y)e^{nu}}{\|Q_g^+e^{nu}\|_{L^1(B_1(x))}}\ud y.
	\end{align*}
	Then by using Fubini's theorem and the above estimate, we have
	\begin{align*}
		\int^{|x|+\frac{1}{2}}_{|x|-\frac{1}{2}}e^{u(\gamma(t))}\ud t\leq &|x|^{-\alpha_0+o(1)}\int^{|x|+\frac{1}{2}}_{|x|-\frac{1}{2}}\int_{B_1(x)}\frac{1}{\sqrt{|\gamma(t)-y|}}\frac{Q_g^+(y)e^{nu(y)}}{\|Q_g^+e^{nu}\|_{L^1(B_1(x))}}\ud y\ud t\\
		\leq& C|x|^{-\alpha_0+o(1)}\\
		=&|x|^{-\alpha_0+o(1)}
	\end{align*}
	Thus we prove our claim \eqref{cliam for distance}.
	
	For any $\epsilon>0$, the estimate \eqref{cliam for distance} shows that there exist $R_2>0$ such that for $|x|\geq R_2$
	$$\int_{|x|-\frac{1}{2}}^{|x|+\frac{1}{2}}e^{u(\gamma(t))}\leq |x|^{-\alpha_0+\epsilon}.$$
	Then we have
	\begin{equation}\label{d_g(x,p)leq C+i^-alpha_0}
		d_g(x,p)\leq C+\int_{|p|}^{|x|}e^{u(\gamma(t))}\ud t\leq C(\epsilon)+\sum^{[|x|]+1}_{i=[R_2]+2}i^{-\alpha_0+\epsilon}.
	\end{equation}

	If $\alpha_0>1$, by choosing small $\epsilon$ and using \eqref{d_g(x,p)leq C+i^-alpha_0}, one has 
	\begin{equation}\label{d_g finite}
		\lim_{|x|\to\infty}\sup d_g(x,p)<+\infty.
	\end{equation}
	Moreover, for any $x,y\in\mr^n$, considering a fixed point $p$,  notice that
	$d_g(x,y)\leq d_g(x,p)+d_g(p,y)$
	to get 
	$$\mathrm{diam}(\mr^n,e^{2u}|dx|^2)<+\infty.$$
	Since, for $|x|\gg1$,  any curves connecting the points  $x$ and $p$ must cross the sphere $\partial B_1(p)$,
	then 
	$$d_g(x,p)\geq \inf_{|y-p|=1}d_g(y,p)>0$$
	which shows that  
	\begin{equation}\label{d_g has low bound}
		\lim_{|x|\to\infty}\inf\frac{\log d_g(x,p)}{\log|x-p|}\geq 0.
	\end{equation}
	Obviously,  combing \eqref{d_g finite} with \eqref{d_g has low bound}, one has
	$$\lim_{|x|\to\infty}\frac{\log d_g(x,p)}{\log|x-p|}=0.$$
	
	Now, we will  deal with the case $\alpha_0\leq 1$.
	From \eqref{d_g(x,p)leq C+i^-alpha_0}, due to the monotonicity of $s^{-\alpha_0+\epsilon}$, one has 
	$$d_g(x,p)\leq C(\epsilon)+\int^{[|x|]+2}_{[R_2]+1}s^{-\alpha_0+\epsilon}\ud s\leq  C(\epsilon)+\frac{([|x|]+2)^{1-\alpha_0+\epsilon}}{1-\alpha_0+\epsilon}$$
	which yields that
	$$\lim_{|x|\to\infty}\sup\frac{\log d_g(x,p)}{\log|x-p|}\leq 1-\alpha_0+\epsilon.$$
	By the arbitrary choice of $\epsilon$, we obtain 
	\begin{equation}\label{d_g upper bound}
		\lim_{|x|\to\infty}\sup\frac{\log d_g(x,p)}{\log|x-p|}\leq 1-\alpha_0.
	\end{equation}
	Thus for the case $\alpha_0 =1$, combing with \eqref{d_g has low bound},  one has
	$$\lim_{|x|\to\infty}\frac{\log d_g(x,p)}{\log|x-p|}=0.$$
	If  $\alpha_0<1$, it was shown in Corollary 1.7 of \cite{Wang IMRN} that $e^{nu}$ is a strong $A_\infty$ weight which deduces that 
	\begin{equation}\label{strong A_infty weight property}
		C^{-1}\delta (x,p)\leq d_g(x,p)\leq C\delta(x,p).
	\end{equation}
	Using Lemma \ref{lem: inf V-g/V_0}, one has
	$$\lim_{|x|\to\infty}\frac{\log d_g(x,p)}{\log|x-p|}=1-\alpha_0.$$
	
	Finally, we finish our proof.
	
\end{proof}

\begin{theorem}\label{thm:d_g(x,p) and |x-p|}
	Suppose that $g=e^{2u}|dx|^2$ is a complete and   normal metric. There holds
	\begin{equation}\label{d_g(x,y)/|x-y|}
		\lim_{R\to\infty}\frac{\log V_g(B_R(p))}{\log |B_R(p)|}=
		\lim_{|x|\to\infty}\frac{\log d_g(x,p)}{\log|x-p|}=1-\alpha_0.
	\end{equation}
\end{theorem}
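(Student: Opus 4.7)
The statement is essentially an assembly of results already established earlier, combined with one crucial observation: completeness of $g$ forces $\alpha_0 \leq 1$, which upgrades the $(1-\alpha_0)^+$ appearing in Lemma \ref{lem: inf V-g/V_0} and Theorem \ref{thm: length comparison identity} to $1 - \alpha_0$.

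The first and main step is to show that if the normal metric $g = e^{2u}|dx|^2$ on $\mathbb{R}^n$ is complete, then $\alpha_0 \leq 1$. There are two ways to accomplish this. The quickest is to invoke the theorem of Chang--Qing--Yang \cite{CQY} quoted in the discussion before Corollary \ref{cor: reversed complete}. A self-contained alternative, using only what has already been proved in the excerpt, runs as follows. Assume for contradiction that $\alpha_0 > 1$. Then Theorem \ref{thm: length comparison identity} gives
$$\mathrm{diam}(\mathbb{R}^n, e^{2u}|dx|^2) < +\infty.$$
Since $(\mathbb{R}^n, g)$ is by hypothesis a complete Riemannian manifold, and its diameter is finite, every closed geodesic ball of sufficiently large radius coincides with the whole space. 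By the Hopf--Rinow theorem the closed metric balls are compact, so $(\mathbb{R}^n, g)$ itself would be compact. This contradicts the fact that the underlying smooth manifold $\mathbb{R}^n$ is not compact, proving $\alpha_0 \leq 1$.

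The second step is immediate. With $\alpha_0 \leq 1$ we have $(1-\alpha_0)^+ = 1 - \alpha_0$. Applying Lemma \ref{lem: inf V-g/V_0} to the (normal) metric $g$ yields
$$\lim_{R\to\infty}\frac{\log V_g(B_R(p))}{\log |B_R(p)|} = (1-\alpha_0)^+ = 1 - \alpha_0,$$
and applying Theorem \ref{thm: length comparison identity} to $g$ yields
$$\lim_{|x|\to\infty}\frac{\log d_g(x,p)}{\log|x-p|} = (1-\alpha_0)^+ = 1 - \alpha_0,$$
which together give \eqref{d_g(x,y)/|x-y|}. The only substantive point is the Hopf--Rinow argument (or the appeal to \cite{CQY}); the rest is a direct reading of the earlier results.
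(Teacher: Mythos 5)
Your proposal is correct and follows essentially the same route as the paper: establish $\alpha_0\leq 1$ from completeness (the paper cites Theorem 1.3 of \cite{CQY} but also notes the same self-contained alternative you give, namely that completeness forces $d_g(x,p)\to\infty$ while $\alpha_0>1$ would make the diameter finite via Theorem \ref{thm: length comparison identity}), and then combine Lemma \ref{lem: inf V-g/V_0} with Theorem \ref{thm: length comparison identity}. Your Hopf--Rinow phrasing of the contradiction is just a slightly more explicit version of the paper's one-line argument.
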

\begin{proof}
	In fact, since $g$ is a complete and normal metric, Theorem 1.3  in \cite{CQY} (See also \cite{NX}, \cite{Fa}) has shown that
	$\alpha_0\leq 1.$  
	In fact, applying Theorem \ref{thm: length comparison identity}, it is not hard to show that $\alpha_0\leq 1$ because if  $g$ is complete, for fixed $p\in\mr^n$,  we have
	$d_g(x,p)\to\infty$ as $|x|\to\infty$.  Combing Lemma \ref{lem: inf V-g/V_0} with  Theorem \ref{thm: length comparison identity}, we finish our proof.
\end{proof}

\section{Necessary and sufficient conditions for normal solutions}\label{section: iff normal solution}

Now, return  to  the conformal invariant equation \eqref{Q-curvature } with finite total Q-curvature.
As we defined in Section \ref{2}, we say $u(x)$ is a normal solution to \eqref{Q-curvature } if $u(x)=\mathcal{L}(Q_ge^{nu})+C$
where    $C$ is a constant. 
Therefore, we need to outline a criterion to identify normal solutions. 
In \cite{CQY}, Chang, Qing and Yang demonstrated that solutions are normal when $R_g\geq 0$ near infinity. Similarly, Wang et al. showed in \cite{WW} that solutions are normal if $\int_{\mr^n}(R_g^-)^{\frac{n}{2}}e^{nu}\ud x<+\infty.$   In the following theorem, we want to generalize their results. Our proof is based on similar principles as theirs, with  emphasizing the necessity of being normal solutions, a previously overlooked consideration.

\begin{theorem}\label{thm: necessary and sufficient for normal solution}
	Consider a smooth function $u(x)$ that satisfies equation \eqref{Q-curvature } with even integer $n\geq4$ and $Q_ge^{nu}\in L^1(\mr^n)$. Then $u(x)$ is a normal solution if and only if the following condition  is satisfied:
	\begin{equation}\label{R_g^-=o(R^n)}
		\int_{B_R(0)}R_g^-e^{2u}\ud x=o(R^{n}).
	\end{equation}
\end{theorem}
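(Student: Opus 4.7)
The statement is an iff, so I treat each implication in turn.

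\emph{Necessity.} Suppose $u$ is a normal solution, so $u=\mathcal{L}(Q_ge^{nu})+C$. Lemma~\ref{lem: Delta Lf} yields $\int_{B_R}|\Delta u|\,\ud x=O(R^{n-2})$ and $\int_{B_R}|\nabla u|^2\,\ud x=O(R^{n-2})$. Rewriting the scalar-curvature identity~\eqref{scalar curvature} as $R_ge^{2u}=-2(n-1)\Delta u-(n-1)(n-2)|\nabla u|^2$ gives the pointwise inequality $R_g^-e^{2u}\le 2(n-1)|\Delta u|+(n-1)(n-2)|\nabla u|^2$, so integrating produces $\int_{B_R}R_g^-e^{2u}\,\ud x=O(R^{n-2})=o(R^n)$.

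\emph{Sufficiency.} Assume $\int_{B_R}R_g^-e^{2u}\,\ud x=o(R^n)$. The same rewriting also yields the one-sided bound $(\Delta u)^+\le R_g^-e^{2u}/(2(n-1))$, whence $\int_{B_R}(\Delta u)^+\,\ud x=o(R^n)$. I plan to proceed in three stages: (i) show $u$ has polynomial volume growth; (ii) apply Theorem~\ref{thm:decomposition} to write $u=v+P$, with $v:=\mathcal{L}(Q_ge^{nu})$ and $P$ a polynomial of degree at most $n-2$ with $P\le C$; (iii) deduce $P$ is constant. The decisive computation is stage (iii). Suppose for contradiction $d:=\deg P\ge 1$. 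With $u=v+P$, expand
\[
\frac{R_ge^{2u}}{2(n-1)}=-\Delta v-\Delta P-\tfrac{n-2}{2}\bigl(|\nabla v|^2+2\nabla v\cdot\nabla P+|\nabla P|^2\bigr),
\]
and integrate over $B_R(0)$. Lemma~\ref{lem: Delta Lf} controls $\int|\Delta v|$ and $\int|\nabla v|^2$ by $O(R^{n-2})$; direct integration gives $|\int\Delta P|=O(R^{n+d-2})$; Cauchy--Schwarz yields $|\int\nabla v\cdot\nabla P|=O(R^{n+d-2})$. Since the leading homogeneous part of $|\nabla P|^2$ is a non-trivial non-negative polynomial of degree $2(d-1)$, one has $\int_{B_R}|\nabla P|^2\ge cR^{n+2d-2}$ for large $R$, which strictly dominates all the other contributions. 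Hence $\int_{B_R}R_ge^{2u}\le -cR^{n+2d-2}$ and therefore $\int_{B_R}R_g^-e^{2u}\ge cR^{n+2d-2}\ge cR^n$---contradicting the hypothesis $o(R^n)$. So $d=0$ and $P$ is constant.

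\emph{Main obstacle.} Stage (i) is the core technical difficulty. A preliminary estimate is immediate: from $\int(\Delta u)^+=o(R^n)$ and the divergence theorem, the spherical mean $\overline u(R):=\fint_{\partial B_R(0)}u$ satisfies $\overline u\,'(R)\le o(R)$, hence $\overline u(R)\le o(R^2)$. To upgrade this averaged bound into polynomial volume growth for $e^{nu}$, I set $h:=u-v$, which is $(n/2)$-polyharmonic; the inequality $\int_{B_R}(\Delta h)^+\le\int(\Delta u)^++\int(\Delta v)^-=o(R^n)+O(R^{n-2})=o(R^n)$ combined with Lemma~\ref{lem: polynomial growth is polynomial} shows that $\Delta h$ is a polynomial of degree at most $n-4$. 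Then Pizzetti's mean-value formula applied to $h$ at arbitrary centres, combined with the mean bound inherited from $\overline u(R)\le o(R^2)$ (plus the $O(\log R)$ contribution from $v$), should allow one to iteratively constrain the Laplacians $\Delta^i h$ and conclude that $h$ itself grows polynomially; this propagates to polynomial volume growth of $e^{nu}$ via Lemma~\ref{lem: e^nLf B_R}. Making this iterative polyharmonic reduction rigorous, in the spirit of the author's earlier work~\cite{Li23}, is where the most delicate effort is required, and is the true heart of the proof.
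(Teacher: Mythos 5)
Your necessity argument is correct and coincides with the paper's. Your stage (iii) endgame is also sound (and, conditional on having the decomposition $u=v+P$, is a cleaner finish than what the paper does). The problem is stage (i), which you yourself flag as the ``true heart of the proof'' and then do not prove: this is a genuine gap, not a routine technicality. Your sketched route cannot work as described. The spherical-mean bound $\overline u(R)\le o(R^2)$ and Jensen's inequality only produce \emph{lower} bounds on $\fint e^{nu}$; to get $\int_{B_R(0)}e^{nu}\,\ud x=O(R^s)$ you need an \emph{upper} bound on $u$ (or on local averages of $e^{nu}$), and the differential inequality $\Delta u\le R_g^-e^{2u}/(2(n-1))$ together with an $o(R^n)$ bound on $\int_{B_R}(\Delta u)^+$ does not yield pointwise or local-sup upper bounds on $u$ by any standard mean-value argument. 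A priori polynomial volume growth is essentially equivalent in difficulty to the conclusion itself, so routing the proof through Theorem~\ref{thm:decomposition} leaves the hardest step unaddressed.

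The paper avoids volume growth entirely. Setting $v=\mathcal{L}(Q_ge^{nu})$ and $h=u-v$ (so $(-\Delta)^{n/2}h=0$), the identity \eqref{scalar curvature} together with Lemma~\ref{lem: Delta Lf} and $|\nabla h|^2\le 2|\nabla u|^2+2|\nabla v|^2$ gives, for \emph{every} center $x$,
\begin{equation*}
\int_{B_R(x)}\Delta h\,\ud y+\frac{n-2}{4}\int_{B_R(x)}|\nabla h|^2\,\ud y\le o(R^n).
\end{equation*}
Pizzetti's formula — an exact identity for polyharmonic functions, requiring no growth hypothesis — expresses $\fint_{B_R(x)}\Delta h$ as a polynomial in $R$ with leading coefficient a positive multiple of $\Delta^{n/2-1}h(x)$; letting $R\to\infty$ forces $\Delta^{n/2-1}h\le 0$, hence constant by Liouville. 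One then iterates, applying Pizzetti to $\partial_j h$ and using the non-negativity of the $|\nabla h|^2$ term to kill the intermediate Laplacians one by one, eventually concluding $\nabla h\equiv 0$. Note that this exploits exactly the same mechanism as your stage (iii) — the gradient-square term dominates and has a sign — but at the level of $h$ rather than $P$, which is what makes the decomposition theorem (and hence stage (i)) unnecessary. I recommend you restructure the sufficiency direction along these lines; as written, the proposal does not constitute a proof.
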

\begin{proof}
	For brevity,  set
	$v(x)=\mathcal{L}(Q_ge^{nu}).$ With help of Green's function \eqref{Green's function}, one has 
	$$(-\Delta)^{n/2}(u-v)=0.$$
	Set $h:=u-v$ which  is a polyharmonic function satisfying
	\begin{equation}\label{polyharmonic h}
		(-\Delta)^{\frac{n}{2}}h=0.
	\end{equation} 
	Consider $R> 2|x|$ and then 
	Lemma \ref{lem: Delta Lf} yields that
	\begin{equation}\label{Delta v leq CR^n-2 }
		\int_{B_R(x)}|\Delta v(z)|\ud z\leq \int_{B_{2R}(0)}|\Delta v(z)|\ud z\leq CR^{n-2}
	\end{equation}
	as well as 
	\begin{equation}\label{nabla v^2}
		\int_{B_R(x)}|\nabla v|^2\ud z\leq\int_{B_{2R}(0)}|\nabla v|^2\ud z\leq  CR^{n-2}.
	\end{equation}
	On one hand, if  \eqref{R_g^-=o(R^n)}  holds, with help of  the definition $R_g$ \eqref{scalar curvature},  for $R>|x|$, one has 
	$$	\int_{B_R(x)}\Delta u\ud y+\frac{n-2}{2}\int_{B_R(x)}|\nabla u|^2\ud y\\
	\leq\frac{1}{2(n-1)}\int_{B_{2R}(0)}R_g^-e^{2u}\ud y= o(R^n).$$
	Based on the estimate $|\nabla  h|^2\leq 2|\nabla u|^2+2|\nabla v|^2$ and the estimates  \eqref{Delta v leq CR^n-2 }, \eqref{nabla v^2}, there holds
	\begin{equation}\label{Delta h+|nabla h|}
		\int_{B_R(x)}\Delta h\ud y+\frac{n-2}{4}\int_{B_R(x)}|\nabla h|^2\ud y\leq o(R^n).
	\end{equation}
	Immediately, we have
	\begin{equation}\label{Delta h leq oR^n}
		\int_{B_R(x)}\Delta h\ud y\leq o(R^n).
	\end{equation}
	With help of Pizzetti's formula (See \cite{Piz} or Lemma 3 in \cite{Mar MZ}) for polyharmonic functions, we have
	$$\frac{1}{|B_R(x)|}\int_{B_R(x)}\Delta h\ud y=\sum^{\frac{n}{2}-2}_{i=0}c_i R^{2i}\Delta^{i}(\Delta h(x)) $$
	where $c_i$ are positive constants.
	By letting $R\to\infty$, the estimate \eqref{Delta h leq oR^n} yields that the leading term 
	$$\Delta^{\frac{n}{2}-1}h(x)\leq 0.$$ 
	With help of Liouville's theorem for the harmonic function  $\Delta^{\frac{n}{2}-1}h$, we have
	\begin{equation}\label{Delta^n/2-1h}
		\Delta^{\frac{n}{2}-1}h=C_0\leq 0
	\end{equation}
	for some constant $C_0$.
	Consequently, for any $1\leq j\leq n$, one has 
	$$\Delta^{\frac{n}{2}-1}\partial_j h=0.$$
	When $n=4$, we have obtained 
	$$\Delta h=C_0\leq 0,\quad \Delta \partial_j h=0.$$ 
	For $n>4$, 
	applying  Pizzetti's formula to $\partial_j h$, 
	there holds
	$$\frac{1}{|B_R(x)|}\int_{B_R(x)}\partial_j h\ud y=\sum^{\frac{n}{2}-2}_{i=0}c_i R^{2i}\Delta^{i}\partial_jh(x).$$
	H\"older's inequality and the estimate \eqref{Delta h+|nabla h|} yield that 
	\begin{align*}
		&\frac{1}{|B_R(x)|}\int_{B_R(x)}\Delta h\ud y+\frac{n-2}{4}	(\frac{1}{|B_R(x)|}\int_{B_R(x)}\partial_j h\ud y)^2\\
		\leq &\frac{1}{|B_R(x)|}\int_{B_R(x)}\Delta h\ud y+\frac{n-2}{4}	\frac{1}{|B_R(x)|}\int_{B_R(x)}|\nabla h|^2\ud y\\
		\leq &o(1)
	\end{align*}
	where $o(1)\to 0$ as $R\to\infty$.
	Since $n>4$, the leading term of the left side is 
	$$(c_{\frac{n}{2}-2}(\Delta^{\frac{n}{2}-2}\partial_j h(x))^2R^{2n-8}$$
	which concludes that 
	$$\Delta^{\frac{n}{2}-2}\partial_jh= 0.$$
	Then one has  $	\Delta^{\frac{n}{2}-1}h=0$. Repeating the program, for some  constant $C_1\leq 0$, we have
	$$\Delta^{\frac{n}{2}-1}h=\cdots=\Delta^2 h=0, \;\Delta h\equiv C_1,\; \Delta^{\frac{n}{2}-1}\partial_jh=\cdots=\Delta\partial_jh=0.$$
	Now, apply the estimate  \eqref{Delta h+|nabla h|} to get 
	$$
	\frac{n-2}{4}	\frac{1}{|B_R(x)|}\int_{B_R(x)}|\nabla h|^2\ud y\leq o(1)-\frac{1}{|B_R(x)|}\int_{B_R(x)}\Delta h\ud y=-C_1+o(1).
	$$
	By mean value property for harmonic functions and letting $R\to\infty$, there holds
	\begin{align*}
		\partial_jh(x)=&\frac{1}{|B_R(x)|}\int_{B_R(x)}\partial_j h\ud y\\
		\leq& (\frac{1}{|B_R(x)|}\int_{B_R(x)}|\nabla h|^2\ud y)^{\frac{1}{2}}\\
		\leq& C.
	\end{align*}
	Liouville's theorem yields that $\partial_jh$ are constants and then
	$\Delta h=0.$
	Then using  \eqref{Delta h+|nabla h|} again as well as the result  $\partial_j  h$ are constants, one has 
	$$\partial_j h=0, \; 1\leq j\leq n$$
	which concludes  that
	$h$ is a constant. Finally, the solution $u$ is normal.
	Reversely, if $u$ is a normal solution, it is not hard to check that \eqref{R_g^-=o(R^n)} holds due to Lemma \ref{lem: Delta Lf}.
	
	Thus $u(x)$ is a normal solution if and only if \eqref{R_g^-=o(R^n)} holds.

\end{proof}

\begin{remark}
	Regrettably, the condition outlined in \eqref{R_g^-=o(R^n)} appears to lack a clear geometric interpretation. Nevertheless, one can utilize \eqref{R_g^-=o(R^n)} alongside H\"older's inequality to establish that $\int_{B_R(0)}(R_g^-)^{\frac{n}{2}}e^{nu}\ud x=o(R^n)$ is sufficient for the identification of normal solutions. However, it remains challenging to demonstrate whether this condition is also necessary.
\end{remark}

We will now give the proof of Theorem \ref{thm: finite volume entropy and normal metric} which provides  a different perspective to confirm the normal solution compared  with  Theorem \ref{thm: necessary and sufficient for normal solution}.

{\bf Proof of Theorem \ref{thm: finite volume entropy and normal metric}:}
\begin{proof}
	When the metric is complete and  normal, Theorem \ref{thm:d_g(x,p) and |x-p|} shows that $\tau(g)$ is finite and
	$$\tau(g)=1-\frac{2}{(n-1)!|\mathbb{S}^n|}\int_{\mr^n}Q_ge^{nu}\ud x.$$
	Conversely, if  $\tau(g)$ is finite, with help of Theorem \ref{thm:decomposition}, one has the decomposition
	$$u(x)=\mathcal{L}(Q_ge^{nu})+P(x)$$
	where $P(x)$ is a polynomial of degree at most $n-2$ and $P(x)\leq C.$ When $n=2,$  $P(x)$ is already a constant. When $n\geq 4$, we will show that $P(x)$ must be  a constant based on the completeness assumption of the metric.
	We argue by contradiction. If $P(x)$ is not a constant, since $P(x)\leq C$ and $\deg(P)\leq n-2$, there exists $k\geq1$ such that
	$P(x)=H_{2k}(x)+P_{2k-1}(x)$
	where $H_{2k}(x)$ is a non-positive  homogeneous polynomial and $P_{2k-1}(x)$ is a polynomial of degree at most $2k-1$. Then there exists $x_0\in \mr^n$ with $|x_0|=1$ and $t_0>0$ such that on the ray  $\gamma(t)=tx_0$ there holds
	$P(\gamma(t))\leq -Ct^{2k}$ for $t\geq t_0>0$. With help of the estimate \eqref{cliam for distance}, there exists $\epsilon_1>0$ and $t_2>0$ such that for any $t\geq t_2$ there holds
	$$\int^{t+1}_{t}e^{\mathcal{L}(Q_ge^{nu})(\gamma(s))}\ud s\leq t^{-\alpha_0+\epsilon_1}.$$
	Then choosing a integer  $T_1=[t_0+t_1+1]$ and intger  $T>T_1$, one has 
	\begin{align*}
		d_g(\gamma(T_1),\gamma(T))\leq& \int^T_{T_1}e^{u(\gamma(t))}\ud t\\
		\leq &\sum^{T}_{i=T_1}\int^{i+1}_{i}e^{u(\gamma(t))}\ud t\\
		\leq& \sum^{T}_{i=T_1}e^{-Ci^{2k}}\int^{i+1}_{i}e^{\mathcal{L}(Q_ge^{nu})(\gamma(t))}\ud t\\
		\leq &\sum^{T}_{i=T_1}e^{-Ci^{2k}}i^{-\alpha_0+\epsilon_1}<+\infty
	\end{align*}
	which contradicts to $d_g(\gamma(T_1),\gamma(T))\to\infty$ as $T\to\infty.$
	Thus $P(x)$  must be a constant i.e. the metric is normal.

Finally, we finish our proof.	
\end{proof}

{\bf Proof of Corollary \ref{cor}:}
\begin{proof}
	With help of the finite volume entropy assmption and Theorem \ref{thm: finite volume entropy and normal metric}, we show that the metric is normal and
	$$\tau(g)=1-\frac{2}{(n-1)!|\mathbb{S}^n|}\int_{\mr^n}Q_ge^{nu}\ud x.$$
	Since $Q_g\geq 0,$ it is obvious that
	$\tau(g)\leq 1$. If $\tau(g)=1$, it is easy to see $Q_g\equiv0$ and then $u\equiv C$ due to $u(x)$ is normal. Conversely, when $u\equiv C$, it is trivial to show  that $\tau(g)=1$.
\end{proof}

\section{Polynomial growth polyharmonic functions}\label{section: pg pf}
In this section, we will give the proof of Theorem \ref{thm: PH dimension is finite } related to polynomial growth polyharmonic functions with help of Theorem \ref{thm:d_g(x,p) and |x-p|}.

{\bf Proof of Theorem \ref{thm: PH dimension is finite }:}
\begin{proof}
	
	{\bf Case(i): }\\
	When the volume entropy is finite, Theorem \ref{thm: finite volume entropy and normal metric} shows that the metric is normal.
	With help of Theorem \ref{thm:d_g(x,p) and |x-p|}, for fixed $p\in\mr^n$ and  any $\epsilon>0$, there exists $R(\epsilon)>0$ such that for any $|x|\geq R(\epsilon)$, there holds
	\begin{equation}\label{d_g(x,p) leq |x-p|^1-alpha_0+epsilon}
		d_g(x,p)\leq |x-p|^{\tau(g)+\epsilon}.
	\end{equation}
	If $f(x)\in\mathcal{PH}_d(M,g)$, the above estimate yields that
	$$|f(x)|\leq Cd_g(x,p)^d+C\leq C|x-p|^{d(\tau(g)+\epsilon)}+C$$
	which yields that 
	$$|f(x)|\leq C|x|^{d(\tau(g)+\epsilon)}+C$$
		It is not hard  to check that
	$d(\tau(g)+\epsilon)<[d(\tau(g)+\epsilon)]+1$.
	Then we have
	$$\int_{B_R(0)}|f|\ud x=o(R^{n+[d(\tau(g)+\epsilon)]+1}).$$ 
	Due to the conformal invariant of GJMS operator,  $P_g f(x)=0$ is equivalent to
	$$(-\Delta )^{\frac{n}{2}}f(x)=0.$$
	Then with help of Lemma \ref{lem: polynomial growth is polynomial}, $f(x)$ must be a polynomial with 
	$\deg(f)\leq \max\{n-2,[d(\tau(g)+\epsilon)]\}$.
	Since the polynomial $|f(x)|\leq C|x|^{d(\tau(g)+\epsilon)}+C$,  we further  obtain
	\begin{equation}\label{deg f leq d(1-alpha_0)}
		\deg(f)\leq [d(\tau(g)+\epsilon)].
	\end{equation}
Then we have 
	\begin{equation}\label{dim <[dtau(g)+epsilon]}
		\dim(\mathcal{PH}_d(M,g))\leq\dim(\mathcal{PH}_{[d(\tau(g)+\epsilon)]}(\mr^n,|dx|^2)).
	\end{equation}
	For sufficiently small $\epsilon$, one may check
	$$[d(\tau(g)+\epsilon)]\leq d\tau(g).$$
	Thus we show that 
	$f(x)\in \mathcal{PH}_{d\tau(g)}(\mr^n,|dx|^2)$ which yields that
	\begin{equation}\label{dim _d leq dim Dtau(g)}
		\dim(\mathcal{PH}_d(M,g))\leq\dim(\mathcal{PH}_{d\tau(g)}(\mr^n,|dx|^2)).
	\end{equation}

	{\bf Case(ii): }
	
		When $\tau(g)=0$, with help of \eqref{dim _d leq dim Dtau(g)}, we have 
	$$\dim(\mathcal{PH}_d(M,g))\leq \dim(\mathcal{PH}_0(\mr^n,|dx|^2))=1.$$
	Obviuously,  since constant functions belongs to the kernal of $P_g$, we have
	$$\dim(\mathcal{PH}_d(M,g))\geq 1.$$
	Thus the identity \eqref{identiy of the dimension} holds for $\tau(g)=0.$
	
	When  $Q_g\geq 0$ near infinity, with help of Lemma \ref{lem:Q^+ and Q^-}, for $|x|\gg1$, there holds 
	\begin{equation}\label{u geq -alpha_0log|x|}
		u(x)\geq -\alpha_0\log|x|-C.
	\end{equation}
	As the arguement  in Lemma \ref{lem: inf V-g/V_0}, for $|x|\gg1$, one has
	$B_{\frac{1}{8}|x|}(\frac{x}{2})\subset B_{x,p}$ and then using \eqref{u geq -alpha_0log|x|} to get
	\begin{align*}
		\delta(x,p)^n\geq & \int_{B_{\frac{1}{8}|x|}(\frac{x}{2})}e^{nu(y)}\ud y\\
		\geq &C\int_{B_{\frac{1}{8}|x|}(\frac{x}{2})}|y|^{-n\alpha_0}\ud y\\
		\geq &C|x|^{n-n\alpha_0}\\
		=&C|x|^{n\tau(g)}.
	\end{align*}
	Then, for $\tau(g)>0$ and the estimate \eqref{strong A_infty weight property}, we have
	\begin{equation}\label{d_g geq |x|^tau(g)d}
		d_g(x,p)\geq C|x|^{\tau(g)}.
	\end{equation}
	For each $f\in \mathcal{PH}_{d\tau(g)}(\mr^n,|dx|^2)$, the estiamte \eqref{d_g geq |x|^tau(g)d} deduces that
	$$|f(x)|\leq C|x|^{d\tau(g)}+C\leq Cd(x,p)^{d}+C$$
	which shows that $f$ also belongs to $\mathcal{PH}_d(M,g)$. Thus we have
	$$\dim(\mathcal{PH}_d(M,g))\geq \dim(\mathcal{PH}_{d\tau(g)}(\mr^n,|dx|^2)).$$
	Combing with \eqref{dim _d leq dim Dtau(g)}, we obtain the identity \eqref{identiy of the dimension}.

	{\bf Case(iii): }

	When  $Q_g\geq 0$, it is easy to get   $\tau(g)\leq 1$ due to \eqref{tau (g) identity}.
	With help of \eqref{dim _d leq dim Dtau(g)} and $\tau(g)\leq 1$, it follows that for each integer $k\geq1$ $$\dim(\mathcal{PH}_k(M,g))\leq \dim(\mathcal{PH}_{k}(\mr^n,|dx|^2)).$$ If equality holds,  we claim that  $\tau(g)=1$. We argue by contradiction. If $\tau(g)<1$, due to the estimate  \eqref{deg f leq d(1-alpha_0)}, we can choose $\epsilon$ to be sufficiently small such that $\tau(g)+\epsilon<1$ and then
	$[k(\tau(g)+\epsilon)]<k$ for the integer $k\geq 1$.
	Thus
	$$\dim(\mathcal{PH}_k(M,g))\leq \dim(\mathcal{PH}_{[k(\tau(g)+\epsilon)]}(\mr^n,|dx|^2))<\dim(\mathcal{PH}_{k}(\mr^n,|dx|^2))$$
	which is a contradiction.
	When $\tau(g)=1$, we can deduce that $Q_g\equiv 0$ from \eqref{tau (g) identity} since $Q_g\geq 0$. Consequently, the  normal solution $u(x)$ must be a constant.
	Conversely, if $u(x)$ is constant, the equality obviously holds.

	{\bf Case(iv): }
	
When we have $\tau(g)=0$,
for each $d\geq 0$ and each $f\in\mathcal{PH}_d(M,g)$, the estimate \eqref{deg f leq d(1-alpha_0)} shows that
$f$ must be a constant by choosing small $\epsilon$.
Conversely, if $\mathcal{PH}_d(M,g)$ consists solely of constant functions for each $d\geq 0$, we claim that $\tau(g)=0$. We proceed by contradiction, assuming that $\tau(g)>0$. By using Theorem \ref{thm:d_g(x,p) and |x-p|} to obtain the inequality $|x-p|^{\tau(g)-\epsilon_1}\leq d_g(x,p)$ for $|x|\gg1$, where $\epsilon_1=\frac{\tau(g)}{2}$. Then, we have $|x|\leq C+Cd_g(x,p)^{\frac{2}{\tau(g)}}$. Since  $P_gx_1=0$, we obtain $x_1\in \mathcal{PH}_{\frac{2}{\tau(g)}}(M,g)$, which leads to a contradiction.

Finally, we finish our proof.	
\end{proof}

\section{Examples}\label{section: example}

Now, we are going to construct some examples to show the case $\alpha_0=1$ is very subtle by following  the examples introduced in \cite{Hu}.
Choose a smooth cut-off function
$\eta(x)$ satisfying $\eta(x)=1$ in $B_{10}(0)$ and vanishes on $\mr^n\backslash B_{20}(0)$. Consider the function
 $$w(x)=(1-\eta(x))\left(-\log|x|+c\log\log|x|\right)$$
 where $c$ is a constant.
 It is not hard to see $w(x)$ is a smooth function on $\mr^n$.
 For $|x|\geq 30$, by using polar coordinates $|x|=r$, one can check that for $|x|\geq 30$, there holds
 $$|(-\Delta)^{\frac{n}{2}}w(x)|\leq \frac{C}{r^n(\log r)^2}$$
 which shows that
$$\int_{\mr^n}|(-\Delta)^{\frac{n}{2}}w|\ud x<+\infty.$$
 Consider the metic $g=e^{2w}|dx|^2$ and 
 set the Q-curvature as 
 $$Q_g=e^{-nw}(-\Delta )^{\frac{n}{2}}w.$$
 It is not hard to check that 
 $$\int_{B_R(0)}e^{nw}\ud x= O(R),\quad \int_{B_R(0)}|w|\ud x=o(R^{n+1}).$$
 By using Theorem \ref{thm:decomposition}, we find that the metric $g$ is normal.
 Applying Lemma \ref{lem: B_1 L(f)}, we must have
 $$\int_{\mr^n}Q_ge^{nw}\ud x=\frac{(n-1)!|\mathbb{S}^n|}{2}.$$
  Choosing $x_0\in\mr^n$ with $|x_0|=1$,  for $R>30$, direct computation yields that 
 $$\int_{30}^{R}e^{u(tx_0)}\ud t=\int_{30}^Rt^{-1}(\log t)^c\ud t,\quad \int_{B_R(0)\backslash B_{30}(0)}e^{nw}\ud x=|\mathbb{S}^n|\int_{30}^Rt^{-1}(\log t)^{nc}\ud t.$$
If $c < -1$, the diameter of $(\mathbb{R}^n, e^{2w}|dx|^2)$ must be finite. For $c = 0$, one can check that the metric is complete. Similarly, when $c < -\frac{1}{n}$, the volume is finite, whereas for $c \geq  -\frac{1}{n}$, the volume is infinite.

\vspace{3em}
{\bf Acknowledgements:}  The author would like to thank Professor Xingwang Xu  for his helpful discussion and constant encouragement.     The author also wants to thank Professor Juncheng Wei, Professor Dong Ye  and Professor Mario Bonk for useful discussions. Besides, the author also wants to thank Professor Yuxin Ge for his useful discussion and hospitality in University Paul Sabatier in Toulouse.

\end{document}